\let\uml\"
\title{On The Virtual Cosmetic Surgery Conjecture}
\author{Keegan Boyle}  
\email{kboyle@uoregon.edu}
\subjclass[2010]{57M10, 57M25}
\newcommand{\Z}{{\mathbb{Z}}}
\newcommand{\R}{{\mathbb{R}}}
\newcommand{\Q}{{\mathbb{Q}}}
\newcommand{\C}{{\mathbb{C}}}
\newcommand{\vol}{{\mbox{Vol}}}
\newtheorem{lemma}{Lemma}
\newtheorem{proposition}{Proposition}
\newtheorem{corollary}{Corollary}
\newtheorem{theorem}{Theorem}
\newtheorem{conjecture}{Conjecture}
\theoremstyle{definition}
\newtheorem{definition}{Definition}[section]
\newtheorem{remark}[definition]{Remark}
\newtheorem{example}[definition]{Example}
\begin{document}

\begin{abstract}
Let $K$ be a knot in $S^3$, and $M$ and $M'$ be distinct Dehn surgeries along $K$. We investigate when $M$ covers $M'$. When $K$ is a torus knot, we provide a complete classification of such covers. When $K$ is a hyperbolic knot, we provide partial results in the direction of the conjecture that $M$ never covers $M'$.
\end{abstract}

\maketitle
\tableofcontents

\section{Introduction}

\subsection{Main Results}

Dehn surgery is an important method for constructing 3-manifolds. Extensive work has been done to understand this construction, but many elementary questions remain unresolved. For example, let $M$ be a closed oriented 3-manifold, $K$ a knot in $M$, and $\gamma$, $\gamma'$ surgery slopes along $K$. Denote by $M_{\gamma}(K)$ Dehn surgery on $K$ in $M$ along $\gamma$. One may ask when $M_{\gamma}(K)$ is homeomorphic to $M_{\gamma'}(K)$. In particular, the following conjecture regarding the uniqueness of Dehn surgery along knots has been around since at least 1991 \cite[Conjecture 6.1]{G}.

\begin{conjecture} [Cosmetic Surgery Conjecture]
\label{conj:csc}
If $M - K$ is not a solid torus and there exists an orientation preserving homeomorphism between $M_{\gamma}(K)$ and $M_{\gamma'}(K)$ then there exists a self-homeomorphism of $M - K$ taking $\gamma$ to $\gamma'$.
\end{conjecture}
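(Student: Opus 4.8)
The plan is to reduce the statement to a question about the knot exterior $E = M - \mathrm{int}\,N(K)$ and then to exploit the geometric decomposition of $E$ together with rigidity. Let $h : M_{\gamma}(K) \to M_{\gamma'}(K)$ be the given orientation-preserving homeomorphism. The two surgered manifolds are Dehn fillings of the \emph{single} compact exterior $E$ along the slopes $\gamma$ and $\gamma'$ on $\partial E$, so the whole conjecture is equivalent to the assertion that $h$ may be isotoped to carry the core $K_{\gamma}$ of the $\gamma$-filling solid torus to the core $K_{\gamma'}$ of the $\gamma'$-filling solid torus. Indeed, any such $h$ then restricts to a self-homeomorphism of $E$ sending the meridian $\gamma$ of the first solid torus to the meridian $\gamma'$ of the second, which is precisely the required slope-preserving symmetry of $M - K$. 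Thus the entire problem is to promote the abstract homeomorphism to one respecting the surgery cores.

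First I would dispose of the non-hyperbolic exteriors. Since $E$ is assumed not to be a solid torus, geometrization endows it with either a single Seifert-fibered or hyperbolic piece or a nontrivial JSJ decomposition. When $E$ is Seifert-fibered over a base orbifold (the case of torus knots and their relatives), the classification of Seifert fibrations pins down the fiber and the base, and one can track the Seifert invariants through the filling; this is the route that already yields the complete classification announced for torus knots in this paper, and I would extend the same bookkeeping to graph-manifold exteriors by an induction over the JSJ pieces, using that the JSJ decomposition is canonical and that Seifert structures are unique up to isotopy outside a short, explicitly understood list of small exceptions, hence are respected up to isotopy by any homeomorphism.

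The main case, and the heart of the matter, is $E$ hyperbolic. Here I would invoke Thurston's hyperbolic Dehn surgery theorem: all but finitely many fillings of $E$ are hyperbolic, and as the slope length grows the core $K_{\gamma}$ becomes the unique shortest closed geodesic of $M_{\gamma}(K)$. By Mostow--Prasad rigidity the homeomorphism $h$ is homotopic, and then (by Waldhausen together with geometrization for Haken manifolds) isotopic, to an isometry $\phi$; an isometry carries the shortest geodesic to the shortest geodesic, so $\phi(K_{\gamma}) = K_{\gamma'}$ and the reduction above closes the argument for all sufficiently long slopes.

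The hard part, and where I expect the real obstruction to lie, is the finite set of exceptional slopes for which the filling is non-hyperbolic (reducible, toroidal, or small Seifert-fibered) or for which the core fails to be the unique shortest geodesic, so that rigidity no longer forces $\phi$ to respect the cores. For these I would attempt to rule out the offending homeomorphisms with auxiliary invariants that are sensitive to the slope yet computable from the exterior: the Casson--Walker invariant, Reidemeister torsion, the Chern--Simons invariant, and the Heegaard Floer correction terms, aiming to show that $M_{\gamma}(K) \cong M_{\gamma'}(K)$ with $\gamma \neq \gamma'$ forces a contradiction unless a symmetry of $E$ already identifies the two slopes. Controlling this finite but knot-dependent exceptional list \emph{uniformly} is exactly the step that keeps the general conjecture open; I do not expect to close it in full, and the realistic target is to reduce the conjecture, for each exterior, to a finite and explicitly bounded verification.
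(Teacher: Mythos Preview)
The statement you are addressing is a \emph{conjecture}, not a theorem, and the paper does not prove it. Conjecture~\ref{conj:csc} is stated as background: the paper cites Mathieu's orientation-reversing counterexample, Ni--Wu's constraint $\gamma=-\gamma'$ for knots in $S^3$, and Jeon's result for hyperbolic knots, and then moves on to its actual topic, the \emph{virtual} cosmetic surgery question. There is no proof in the paper to compare your proposal against.

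Your sketch itself is not a proof either, as you correctly acknowledge in your final paragraph. The reduction to ``isotope $h$ to take core to core'' is standard, and the argument via shortest geodesics and Mostow rigidity does handle all but finitely many slopes on a hyperbolic exterior---this is essentially the mechanism behind the Jeon-type results the paper cites. But the step you flag as ``the hard part'' is exactly the content of the open conjecture: no known combination of Casson--Walker, torsion, Chern--Simons, and Heegaard Floer invariants is known to eliminate the finite exceptional set uniformly, and your proposal offers no new idea there. For the Seifert-fibered and graph-manifold exteriors your plan is also too optimistic: the torus-knot classification in this paper concerns \emph{covers}, not homeomorphisms, and even in the homeomorphism setting the small Seifert exceptions are precisely where the bookkeeping breaks down. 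In short, what you have written is a reasonable outline of why the conjecture is believed and of the known partial progress, but it is not a proof, and the paper makes no claim to contain one.
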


Many partial results have been shown. For example, in 1990, Mathieu showed \cite{Ma} that the orientation preserving requirement is necessary by constructing an orientation reversing counterexample. See also \cite{BHW}. In 2015 Ni and Wu \cite{NW} proved that if surgery on $\gamma$ and $\gamma'$ provide a counterexample to the conjecture for a knot in $S^3$, then $\gamma = -\gamma'$. Perhaps most recently Jeon proved \cite{J} in 2016 that the conjecture is true for all but finitely many surgeries on each knot in a fairly general class of hyperbolic knots.

As a generalization of the cosmetic surgery question Lidman and Manolescu \cite[Question 1.15]{LM} asked when $M_{\gamma}(K)$ covers $M_{\gamma'}(K)$. Restricting to knots in $S^3$, we use the homological framing to write $\gamma$ as $p/q \in \Q$ with gcd$(p,q) = 1$. With this notation, a naive generalization of Conjecture \ref{conj:csc} for knots in $S^3$ might be

\begin{conjecture} [Virtual Cosmetic Surgery Conjecture]
\label{conj:vcsc}
If $K \subset S^3$ is not the unknot, $p'/q' \neq p/q \neq \infty$, and there exists a covering map of degree $d$ from $S^3_{\gamma}(K)$ to $S^3_{\gamma'}(K)$, then there exists a degree $d$ self-covering map of $S^3 - K$ taking the $p/q$ curve to the $p'/q'$ curve.
\end{conjecture}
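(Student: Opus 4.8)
The plan is to prove Conjecture \ref{conj:vcsc} by applying the Geometrization Theorem to $S^3-K$ and treating the resulting cases separately, which is the dichotomy already anticipated in the abstract. Since $K$ is not the unknot, $S^3-K$ is irreducible with incompressible torus boundary, so $K$ is a torus knot, a satellite knot, or a hyperbolic knot. In every case the first move is the same: if $f\colon S^3_{p/q}(K)\to S^3_{p'/q'}(K)$ is the hypothesized degree-$d$ cover, then $f$ restricts to a covering map on each geometric (or JSJ) piece, and the core $\widehat K$ of the surgery solid torus in $S^3_{p/q}(K)$ and the core $K'$ of the surgery solid torus in $S^3_{p'/q'}(K)$ both have complement $S^3-K$; the crux is to arrange $f$ so that $f^{-1}(\text{core})=\text{core}$, whence $f$ restricts to a degree-$d$ self-covering of $S^3-K$ carrying the $p/q$ curve to the $p'/q'$ curve, which is exactly the desired conclusion.

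For a torus knot $K=T(r,s)$, the complement is Seifert fibered over the disk with two cone points of orders $r,s$, and by Moser's classification each $S^3_{p/q}(K)$ is a (small) Seifert fibered space, a lens space, or a connected sum of two lens spaces, with invariants explicit in $r,s,p,q$. I would first dispose of the finitely many reducible and lens-space fillings by a direct $\pi_1$/homology argument, and then in the generic Seifert-fibered case invoke the classification of finite covers of Seifert fibered spaces: up to isotopy such a cover is fiber-preserving, or, in the few special cases where the base orbifold is very good, contains a horizontal piece, and in either situation it is detected by a covering of base orbifolds together with a fiberwise degree. Matching base orbifolds, cone-point multiplicities, and Euler numbers then cuts down to an arithmetic problem in $r,s,p,q,p',q',d$ whose solutions I would solve out completely, and for each solution exhibit the corresponding self-covering of the torus knot complement --- which, being Seifert fibered, carries an abundance of fiber-preserving self-covers --- realizing the cover. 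This is bookkeeping-heavy but should go through and yield the promised complete classification.

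For hyperbolic $K$ the conjecture takes a cleaner shape, because $S^3-K$ admits no nontrivial finite self-covering: volume is multiplicative under covers, so a self-cover of a finite-volume hyperbolic manifold is forced to have degree $1$. Thus when $d=1$ the conjecture is exactly the Cosmetic Surgery Conjecture for knots in $S^3$, for which we inherit the known partial results (Ni--Wu forces $p'/q'=-p/q$, and much more is known), and the remaining content is to show that $S^3_{p/q}(K)$ never covers $S^3_{p'/q'}(K)$ with $d\ge 2$. The strategy is: (i) treat the finitely many exceptional slopes separately, so that by Thurston's hyperbolic Dehn surgery theorem both $S^3_{p/q}(K)$ and $S^3_{p'/q'}(K)$ are hyperbolic for all remaining slopes; (ii) a degree-$d$ cover of hyperbolic manifolds forces $\vol(S^3_{p/q}(K))=d\cdot\vol(S^3_{p'/q'}(K))$, but $\vol(S^3_{p/q}(K))<\vol(S^3-K)$ for every hyperbolic filling while $\vol(S^3_{p'/q'}(K))\to\vol(S^3-K)$ as $|p'|+|q'|\to\infty$, so $d\ge 2$ already confines $p'/q'$ and hence $p/q$ to a finite, effectively bounded set; (iii) on this finite set, combine the transfer constraint --- which gives, for $p,p'\neq 0$, that $|p'|$ divides $d\,|p|$, since $H_1$ of each filling is cyclic --- with rigidity: for sufficiently long fillings the surgery core is the unique systole, closed geodesics pull back to unions of closed geodesics under the local isometry $f$, so $f$ sends core to core, $f$ restricts to a self-cover of $S^3-K$, and $d=1$, a contradiction. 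The residual exceptional fillings (small Seifert fibered, toroidal, reducible) are cleaned up with the Seifert-fibered and JSJ arguments from the torus-knot analysis, and satellite knots would be approached by the same JSJ bookkeeping applied to the companion and pattern, inductively.

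The main obstacle I anticipate is precisely step (iii): converting the a priori finite list of candidate triples $(p/q,p'/q',d)$ into an actual contradiction. The ``core maps to core'' lemma is delicate when $d$ is small, so that the fillings are not long enough for the core to be the unmistakable systole and for the Margulis-tube argument to single out a unique preimage component; and when the cover is irregular the transfer estimate on $H_1$ is weaker, so one is genuinely reduced to a finite but possibly large case analysis. It is therefore plausible that --- as in Jeon's theorem --- only a statement of the form ``the conjecture holds for every hyperbolic $K$ outside a finite, effectively bounded set of slope pairs'' can be extracted in general, rather than the full conjecture, and I would present the hyperbolic case in that form while keeping the torus-knot classification complete.
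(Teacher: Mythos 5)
There is a fundamental problem: the statement you are trying to prove is a conjecture that the paper itself refutes for torus knots, so no proof along your lines (or any lines) can succeed in the generality you claim. The crux step of your plan --- ``arrange $f$ so that $f^{-1}(\text{core})=\text{core}$, whence $f$ restricts to a degree-$d$ self-covering of $S^3-K$ carrying the $p/q$ curve to the $p'/q'$ curve'' --- is exactly what fails, and the paper exhibits explicit counterexamples. In Example \ref{ex:cover2}, $105/4$ surgery on $T(4,7)$ is a genuine $5$-fold cover of $21/1$ surgery on $T(4,7)$ (both Seifert fibered over $S^2(4,7,7)$), but the cover sends the $(7,5)$ exceptional fiber to the $(7,1)$ exceptional fiber, whereas any self-cover of the Seifert fibered complement of $T(4,7)$ must preserve the $(7,5)$ fiber; hence this cover of surgered manifolds is not induced by any self-cover of the knot complement. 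Example \ref{ex:cover} gives another counterexample in which a lens-space surgery on $T(2,3)$ covers a small Seifert fibered surgery, again with no compatible self-cover of the complement. So your torus-knot step, where you assert that for each arithmetic solution you can ``exhibit the corresponding self-covering of the torus knot complement realizing the cover,'' is false: the correct outcome of the Seifert-fibered bookkeeping (carried out in Theorems \ref{thm:orb} and \ref{thm:torusmain} via the fiberwise/pullback factorization of Proposition \ref{prop:pullback}) is a classification of covers between surgeries that are \emph{not} all realized by self-covers of $S^3-K$, i.e.\ a classification of counterexamples to Conjecture \ref{conj:vcsc}, not a proof of it.

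Your hyperbolic discussion is closer to what is actually true and provable: since Mostow rigidity kills nontrivial self-covers of a hyperbolic knot complement, the conjecture there splits into the classical cosmetic surgery conjecture (degree $1$) plus the assertion that no surgery nontrivially covers another (Conjecture \ref{conj:hyp}), and the volume comparison you sketch is essentially how the paper obtains its partial result (Proposition \ref{prop:hyp}, via the Futer--Kalfagianni--Purcell lower bound, yielding at most $32$ bad slopes per knot, plus case checks for knots with at most $8$ crossings). But your step (iii) --- promoting the finite list to an actual contradiction by a ``core maps to core'' geodesic argument --- is not established in the paper and, as you yourself concede, is delicate precisely where it is needed; it cannot be invoked as if it closed the argument. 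In short: the hyperbolic half should be presented only as a partial result, and the torus-knot half cannot be presented as a proof at all, because the conjecture is false there.
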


\begin{remark}
The $p/q \neq \infty$ condition is necessary since there exist lens space surgeries on hyperbolic knots. We do not restrict to orientation preserving covers, since it is difficult to keep track of orientations. 
\end{remark}

This conjecture is false for torus knots $T(r,s)$ in $S^3$, see Examples \ref{ex:cover} and \ref{ex:cover2}, but we will classify counterexamples. In order to do so, we prove a structure theorem for covers between Seifert fiber spaces (see Proposition \ref{prop:pullback}), which reduces the question to classifying all covers between orbifolds with base space $S^2$ and 3 or fewer cone points (These are called \emph{small} Seifert fiber spaces, see section \ref{sec:realize}). 

\begin{theorem}
\label{thm:orb}
Let $S^2(a,b,c) \to S^2(a', b', c')$ be a degree $n >1$ cover of 2-orbifolds over $S^2$ with cone points of orders $a, b, c$, and $a', b', c'$ respectively. Then 
\begin{enumerate}
\item If $\frac{1}{a} + \frac{1}{b} + \frac{1}{c} < 1$, then $(a,b,c),(a',b',c'),n$ are one of the following up to reordering of $(a,b,c)$ and $(a',b',c')$, for some $x, y \in \Z$.
\[
\begin{array}{c|c|c||c|c|c}
(a,b,c) & (a',b',c') & n &(a,b,c)&(a',b',c')&n \\
\hline
 (x,x,y)& (2,x,2y)& 2 & (x,4x,4x)& (2,3,4x)& 6 \\
 (2,x,2x)& (2,3,2x)& 3 & (3,3,7)& (2,3,7)& 8 \\
 (x,x,x)& (3,3,x)& 3 & (2,7,7)& (2,3,7)& 9 \\
(3,x,3x)& (2,3,3x)& 4 & (3,8,8)& (2,3,8)& 10 \\
(x,2x,2x)& (2,4,2x)& 4 & (4,8,8)& (2,3,8)& 12 \\
  (x,x,x)& (2,3,2x)& 6 & (9,9,9)& (2,3,9)& 12 \\
 (4,4,5)& (2,4,5)& 6 & & &  \\

\end{array}
\]
\item If $\frac{1}{a} + \frac{1}{b} + \frac{1}{c} = 1$, then $(a,b,c),(a',b',c'),n$ are one of the following up to reordering of $(a,b,c)$ and $(a',b',c')$, where $n = x^2 + xy + y^2$ and $m = x^2 + y^2$ for some $x,y \in \Z$.
\[
\begin{array}{c|c|c}
(a,b,c) & (a',b',c') & n \\
\hline
 (2,3,6)& (2,3,6)& n \\
 (2,4,4)& (2,4,4)& m \\
 (3,3,3)& (3,3,3)& n \\
 (3,3,3)& (2,3,6)& 2n \\
\end{array}
\]

\item $\frac{1}{a} + \frac{1}{b} + \frac{1}{c} > 1$, then $(a,b,c),(a',b',c'),n$ are one of the following up to reordering of $(a,b,c)$ and $(a',b',c')$, for some $x,y \in \Z$.
\[
\begin{array}{c|c|c|c||c|c|c}
(a,b,c) & (a',b',c') & n& $conditions$ &(a,b,c)&(a',b',c')&n\\
\hline
(1, x, y) & (1, nx, ny) & n &&(2,3,3) & (2,3,4) & 2 \\
(1,d,d) & (2,2,x) & 2x/d & d|x &(2,2,3) & (2,3,4) & 4 \\
(2,2,d) & (2,2,x) & x/d & d|x &(2,3,3) & (2,3,5) & 5 \\
(1,d,d) & (2,3,3) & 12/d & d \in \{1,2,3\} &(2,2,5) & (2,3,5) & 6 \\
(1,d,d) & (2,3,4) & 24/d & d\in \{1,2,3,4\} &(2,2,3) & (2,3,5) & 10 \\
(1,d,d) & (2,3,5) & 60/d & d  \in \{1,2,3,5\} &&& \\

\end{array}
\]
\end{enumerate}
Furthermore, we construct all of the above covers.
\end{theorem}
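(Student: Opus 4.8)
The plan is to reduce the classification to a finite combinatorial problem using the orbifold Euler characteristic and the local structure of orbifold covers. Recall that $\chi^{\mathrm{orb}}(S^2(a,b,c)) = -1 + \tfrac1a + \tfrac1b + \tfrac1c$, and that a degree $n$ orbifold cover multiplies $\chi^{\mathrm{orb}}$ by $n$; since covers preserve the sign of $\chi^{\mathrm{orb}}$, the three items are mutually exclusive and I treat them separately. I also use that orbifold covers are unbranched over regular points, so every cone point of the source lies over a cone point of the target, and that over a target cone point of order $m$ the preimages are points whose cone orders are the quotients $m/\ell$, where the local degrees $\ell$ divide $m$ and sum to $n$. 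When the base has three genuine cone points this is packaged as monodromy: deleting the cone points, a degree $n$ cover is a transitive homomorphism $\rho\colon\langle x_1,x_2,x_3\mid x_1x_2x_3=1\rangle\to S_n$ such that every cycle length of $\rho(x_i)$ divides $a_i'$, where $(a_1',a_2',a_3')=(a',b',c')$; a length-$\ell$ cycle of $\rho(x_i)$ yields a preimage cone point of order $a_i'/\ell$, Riemann--Hurwitz for the underlying branched cover $S^2\to S^2$ gives $\sum_i(\#\text{ cycles of }\rho(x_i))=n+2$, and the source is $S^2(a,b,c)$ precisely when exactly three of these cycles (across all $i$) have length strictly below $a_i'$. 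Orbifolds over $S^2$ with fewer than three cone points are the good $S^2(d,d)=S^2/C_d$ and the bad $S^2(1,x,y)$, which are handled below by the subgroup analysis and by power maps respectively.

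For item (1), $\chi^{\mathrm{orb}}<0$ forces $a,b,c,a',b',c'\ge 2$, so $|\chi^{\mathrm{orb}}(S^2(a',b',c'))|\ge 1-(\tfrac12+\tfrac13+\tfrac17)=\tfrac1{42}$ while $|\chi^{\mathrm{orb}}(S^2(a,b,c))|<1$; hence $n\le 41$. With $n$ bounded, I enumerate the admissible cycle-type triples, organized by how small $a',b',c'$ are: the rows carrying a free parameter are the infinite families in which a single target cone order is unbounded and $n\in\{2,3,4,6\}$, and the rest are sporadic. This list coincides with the classical Singerman classification of inclusions between hyperbolic triangle groups, so one may either invoke it or reprove it directly from the cycle constraints. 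For item (2), $\chi^{\mathrm{orb}}=0$ imposes no bound on $n$, so I pass to geometry: $S^2(2,3,6)$, $S^2(3,3,3)$, $S^2(2,4,4)$ are $\R^2/\Gamma$ for the orientation-preserving Euclidean triangle groups, each of which contains its translation lattice $\Lambda$ with cyclic quotient $C_k$, $k\in\{6,3,4\}$; up to the index-two inclusion $\Delta(3,3,3)<\Delta(2,3,6)$, a finite-index subgroup with $S^2$-base quotient and at most three cone points is obtained from a $C_k$-invariant sublattice $\Lambda'\subseteq\Lambda$. For the hexagonal lattice these are the ideals of $\Z[\omega]$, of index $x^2+xy+y^2$; for the square lattice the ideals of $\Z[i]$, of index $x^2+y^2$; and the aforementioned index-two inclusion accounts for the last row. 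For item (3), $\chi^{\mathrm{orb}}>0$: in any $2$-orbifold cover the source is good if and only if the target is, the bad orbifolds over $S^2$ are exactly the $S^2(1,x,y)$, and covers among them are the power maps $z\mapsto z^n$, yielding $S^2(1,x,y)\to S^2(1,nx,ny)$; the good spherical orbifolds are the quotients of $S^2$ by the finite rotation groups $C_n,D_n,A_4,S_4,A_5$ and the trivial group, so the remaining covers come from the inclusions among these finite groups, read off together with the quotient orbifold and index associated to each subgroup.

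The ``furthermore'' clause is then essentially free: the combinatorial or algebraic datum found in each case \emph{is} a construction. A transitive permutation triple with trivial product is by definition the monodromy of a cover realizing the prescribed orbifold data; the Euclidean covers are realized by the explicit invariant sublattices; the good spherical covers by the subgroup inclusions $S^2/H\to S^2/G$; and the bad ones by explicit power maps. So ``constructing all of the above covers'' amounts to exhibiting, row by row, the object the enumeration has already produced.

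I expect the main obstacle to be the enumeration in item (1): bounding $n$ is immediate, but the case analysis over $n\le 41$ and, for the small values of $n$, over the accompanying infinite families of targets, must be organized so that every admissible cycle type is listed exactly once and none is missed, with care for the degenerate relabelings (cone orders equal to $1$) under which one orbifold can appear as several triples. A secondary point is ruling out exotic covers in items (2) and (3): one must verify that the orbifolds in play are genuinely good (respectively bad), that a good orbifold is never covered by a bad one, and that every finite-index subgroup of the relevant crystallographic or spherical group with $S^2$-base quotient has the stated lattice- or subgroup-theoretic form.
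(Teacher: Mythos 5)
Your proposal is correct, and it follows the paper's overall decomposition — split by the sign of $\chi^{\mathrm{orb}}$, use multiplicativity of $\chi^{\mathrm{orb}}$ together with the local divisor/partition constraint at cone points, and handle the Euclidean case via sublattices of the hexagonal and square lattices — but you finish two of the three cases by a somewhat different route. For $\chi^{\mathrm{orb}}<0$ the paper first notes that degree $\geq 7$ forces finitely many targets and then does a brute-force check of low degrees by cutting $S^2$ along a wedge of two circles; your formulation via transitive triples $\rho\colon\langle x_1,x_2,x_3\mid x_1x_2x_3=1\rangle\to S_n$ with $\sum_i\#\mathrm{cycles}(\rho(x_i))=n+2$ is essentially the same combinatorics (a cover of the wedge of two circles is exactly the data of two permutations), and invoking Singerman's list of triangle-group inclusions is a legitimate shortcut once you note that hyperbolic triangle orbifolds are good and rigid, so orbifold covers correspond to inclusions of Fuchsian triangle groups. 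For $\chi^{\mathrm{orb}}=0$ your argument is the paper's, rephrased through translation lattices of the wallpaper groups and ideals of $\Z[\omega]$ and $\Z[i]$. For $\chi^{\mathrm{orb}}>0$ you genuinely diverge: the paper lists candidates by Euler characteristic plus the partition condition and then realizes each one by explicit pictures, whereas you use orbifold covering-space theory (source good iff target good; good spherical orbifolds are $S^2/G$ for finite rotation groups, so covers are exactly $S^2/H\to S^2/G$; spindles and teardrops are covered only via power maps). Your route is more conceptual and makes the ``furthermore'' clause automatic, at the cost of relying on Thurston's good/bad and universal-cover machinery (which the paper cites anyway); the paper's route is more elementary but leaves existence to case-by-case figures. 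One point in your method's favor: the subgroup enumeration produces covers such as $S^2(2,2,2)\to S^2(2,3,3)$ of degree $3$ (from $V_4<A_4$), which appear in the paper's Proposition \ref{prop:orbcoverlist} but not in the table of the theorem as displayed, so your approach recovers the paper's fuller list and flags that internal discrepancy. The remaining burden in your sketch — the actual bookkeeping for $n\leq 41$ and the parameterized families in case (1), and the verification that every finite-index triangle subgroup of the rotation wallpaper groups arises from an invariant sublattice up to the index-two inclusion — is no heavier than what the paper itself leaves implicit.
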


\begin{remark}
It is interesting to note that many Seifert fibered surgeries on other knots are also known to be small, for example alternating hyperbolic knots \cite{IM}, and hence the covers between Seifert fibered surgeries on such knots are also understood through Theorem \ref{thm:orb}. 
\end{remark}
The covers in Theorem \ref{thm:orb} give counter examples to Conjecture \ref{conj:vcsc} for torus knots, but we provide a nice structure theorem in the cases where these exceptional covers do not occur.

\begin{theorem}
\label{thm:torusmain}
Let $r,s > 2$, $(r,s) \neq (3,4), (3,5),(4,5), (3, 7),$ or $ (3,8)$. Then $S^3_{p/q}(T(r,s))$ covers $S^3_{p'/q'}(T(r,s))$ if and only if all of the following hold.
\begin{enumerate}
\item $|rsq-p| = |rsq' - p'|$
\item $p | p'$
\item gcd$(p/p',rsq-p) = \mbox{gcd}(p/p', rs) = 1$
\item $p/p' \equiv 1 $ mod $rs$
\end{enumerate}
If these are satisfied, then the degree of the cover is $p'/p$.
\end{theorem}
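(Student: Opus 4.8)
The plan is to recast the problem in terms of Seifert fibered spaces. By Moser's classification of surgeries on torus knots, $M_{p/q} := S^3_{p/q}(T(r,s))$ is Seifert fibered over the $2$-orbifold $S^2(r,s,\sigma)$ with $\sigma = |rsq-p|$ (where $S^2(r,s,1)$ is a lens space and $\sigma = 0$, forced by $p/q = rs$, is the reducible case), and $|H_1(M_{p/q})| = |p|$, so its exceptional fibers have orders $r,s,\sigma$ and $|e(M_{p/q})| = |p|/(rs\sigma)$. I would first dispose of the degenerate regimes. If $\sigma = 0$ then $M_{p/q}$ is reducible, so it neither covers nor is covered by an irreducible manifold, and condition (1) with $\sigma = 0$ forces $p'/q' = rs = p/q$; so nothing new arises. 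If $\sigma = \sigma' = 1$ both manifolds are lens spaces, a covering forces $\Z/p = \pi_1(M_{p/q}) \le \pi_1(M_{p'/q'}) = \Z/{p'}$, hence $p \mid p'$ and degree $d = p'/p$, and using Moser's explicit lens-space description together with $p \equiv \pm1 \pmod{rs}$ one checks directly that the cover exists exactly when (2) and (3) hold (with (1) automatic). Finally, since $r,s \ge 3$ are coprime the base orbifold $S^2(r,s,\sigma)$ is never Euclidean, and is spherical only when $\sigma = 2$ and $(r,s) \in \{(3,4),(3,5)\}$, giving $S^2(2,3,4)$ or $S^2(2,3,5)$; these are precisely the pairs excluded so as to avoid the extra covers among spherical space forms, and outside the excluded list $M_{p/q}$ has non-cyclic $\pi_1$ as soon as $\sigma \ge 2$, so a lens space can there only cover or be covered by a lens space.

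Assume now $\sigma, \sigma' \ge 2$. Then both $M_{p/q}$ and $M_{p'/q'}$ have hyperbolic base orbifolds, hence unique Seifert fibrations up to isotopy, so any covering $M_{p/q} \to M_{p'/q'}$ is, up to isotopy, fiber-preserving for these fibrations. By the structure theorem for covers of Seifert fiber spaces (Proposition \ref{prop:pullback}), such a cover is the pullback of a covering of base $2$-orbifolds $S^2(r,s,\sigma) \to S^2(r,s,\sigma')$ of some degree $n$ dividing $d$, with fiber data accounting for the remaining factor $d/n$.

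The crux is that $n = 1$. If $n > 1$, Theorem \ref{thm:orb} lists all admissible triples of cone-point data; since each of $S^2(r,s,\sigma)$ and $S^2(r,s,\sigma')$ has cone-point multiset containing the fixed coprime pair $\{r,s\}$ with $r,s \ge 3$, I would run through the list of Theorem \ref{thm:orb} in search of entries $(a,b,c) \to (a',b',c')$ in which both $\{a,b,c\}$ and $\{a',b',c'\}$ contain two distinct coprime integers $\ge 3$. A direct inspection leaves only $(4,4,5)\to(2,4,5)$, $(3,3,7)\to(2,3,7)$, and $(3,8,8)\to(2,3,8)$, forcing $\{r,s\}$ to be $\{4,5\}$, $\{3,7\}$, or $\{3,8\}$ — all excluded by hypothesis, and in each the source value $\sigma$ differs from the target value $\sigma'$, which is exactly why these pairs must be removed. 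So $n = 1$: the orbifold cover is a homeomorphism, hence $\sigma = \sigma'$, i.e. $|rsq-p| = |rsq'-p'|$, which is condition (1), and $M_{p/q} \to M_{p'/q'}$ is a vertical cover over a homeomorphism of base orbifolds.

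Finally I would determine when a degree-$d$ vertical cover $M_{p/q} \to M_{p'/q'}$ over a homeomorphism of $S^2(r,s,\sigma)$ exists, by the Seifert-invariant bookkeeping: it exists iff $d$ is coprime to each exceptional order $r,s,\sigma$ — so that the orbifold group lifts and no exceptional fiber is partially unwound away — and the Euler number and first homology match, which, after renormalizing the Seifert invariants of $M_{p'/q'}$ to be divisible by $d$, reduces to $d \mid |H_1(M_{p'/q'})| = |p'|$; the cover then has $|H_1| = |p'|/d$, so comparison with $|H_1(M_{p/q})| = |p|$ yields $p \mid p'$ and $d = p'/p$ (condition (2) and the degree assertion), while coprimality of $d = p'/p$ to $r,s$ and to $\sigma = |rsq-p|$ is exactly condition (3). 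The reverse direction — constructing the cover from (1)--(3), which also delivers the ``we construct all of the above'' clause — is by this fiber-unwinding construction, after checking its quotient is a $T(r,s)$-surgery with the stated $p'/q'$. I expect the principal difficulties to be (a) checking that the scan of Theorem \ref{thm:orb} above is genuinely exhaustive, so that precisely the five listed pairs get excluded, and (b) the Seifert-invariant bookkeeping of the last step in both directions, in particular confirming that every numerically admissible degree $d$ corresponds to an honest reduced slope $p'/q'$.
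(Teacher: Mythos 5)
Your proposal is correct and follows essentially the same route as the paper: Moser's classification plus the factorization of Proposition \ref{prop:pullback} and the list in Theorem \ref{thm:orb} to show the induced base-orbifold cover is trivial outside the excluded knots (the lens-space case being exactly Lemma \ref{lem:lensspaces}), followed by the fiberwise-cover $H_1$/Seifert-invariant bookkeeping that yields conditions (1)--(3) and the degree. The only nitpick is that your scan of Theorem \ref{thm:orb} should require both triples to contain the \emph{same} coprime pair $\{r,s\}$ with $r,s>2$ (not merely each containing some coprime pair of integers $\geq 3$); with that corrected criterion the inspection still singles out precisely the excluded knots $T(4,5)$, $T(3,7)$, $T(3,8)$, as you concluded.
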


One might hope that in this case Conjecture \ref{conj:vcsc} is satisfied, but in fact even covers over a fixed base orbifold can give counterexamples. See Example \ref{ex:cover2}.

In the case of hyperbolic knots, Mostow rigidity implies that there are no non-trivial self covers of the knot complements. In this case Conjecture \ref{conj:vcsc} would reduce to the cosmetic surgery conjecture on hyperbolic knots for trivial covers, and the following conjecture.

\begin{conjecture}[Hyperbolic Virtual Cosmetic Surgery Conjecture]
\label{conj:hyp}
If $p/q \neq p'/q' \in \Q$, then $S^3_{p/q}(K)$ does not non-trivially cover $S^3_{p'/q'}(K)$ for any hyperbolic knot $K$.
\end{conjecture}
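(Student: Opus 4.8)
\emph{Proof strategy.} The plan is to argue by contradiction: suppose $f\colon M \to M'$ is a cover of degree $d \ge 2$, where $M = S^3_{p/q}(K)$, $M' = S^3_{p'/q'}(K)$, $p/q \ne p'/q'$, and $K$ is hyperbolic. I would first split according to the geometry of $M'$. A finite cover of a hyperbolic manifold is hyperbolic, so $M$ is hyperbolic whenever $M'$ is; contrapositively, if $M$ is non-hyperbolic then so is $M'$. By Thurston's hyperbolic Dehn surgery theorem the knot complement $N = S^3 - K$ has only finitely many exceptional (non-hyperbolic) fillings, so the non-hyperbolic case involves only finitely many slope pairs $(p/q, p'/q')$. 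For these, $M$ and $M'$ are Seifert fibered, toroidal, or reducible, and a cover respects the geometric and JSJ decompositions; on the Seifert pieces I would invoke Theorem \ref{thm:orb} together with the homological constraints $H_1(M) = \Z/p$, $H_1(M') = \Z/p'$ to verify directly that none of these finitely many pairs supports a nontrivial cover, using known restrictions on exceptional surgeries (including the cabling results) to dispense with the reducible and toroidal subcases.

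The main case is $M, M'$ both hyperbolic, and here I would use volume to cut the problem down to a finite check. Writing $V_0 = \vol(N)$, covers multiply volume, so $\vol(M) = d\,\vol(M')$. Filling strictly decreases volume, hence $\vol(M), \vol(M') < V_0$, while by Thurston and Neumann--Zagier $\vol(S^3_{p'/q'}(K)) \to V_0$ as the base slope grows. Thus if $p'/q'$ is large then $\vol(M) = d\,\vol(M') \ge 2\,\vol(M')$ is forced above $V_0$, which is impossible. Quantitatively $\vol(M') \le V_0/d \le V_0/2$, and since all but finitely many fillings have volume exceeding $V_0/2$, the base slope $p'/q'$ lies in a finite set; for each such slope, with $v' = \vol(M')$ fixed, the relation $\vol(M) = d\,v' < V_0$ bounds $d$ and forces $\vol(M)$ into a finite set of values each below $V_0$, so (again by the accumulation of filling volumes only at $V_0$) only finitely many $p/q$ survive. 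Volume multiplicativity therefore reduces the hyperbolic case to a finite, explicitly computable list of candidate triples $(p/q, p'/q', d)$.

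To eliminate the surviving candidates I would pass to the core geodesics. Let $\gamma'$ be the core of the filling solid torus in $M'$; drilling it returns the knot complement, $M' \setminus \gamma' \cong N$. Since $f$ is a local isometry, $f^{-1}(\gamma')$ is a disjoint union of closed geodesics, and restricting $f$ yields a degree-$d$ cover $M \setminus f^{-1}(\gamma') \to N$; equivalently $M$ is a filling of a degree-$d$ cover $\tilde N \to N$ of the knot complement, on every cusp of $\tilde N$. The goal is to show this forces $f^{-1}(\gamma')$ to be connected with $f(\gamma) = \gamma'$, where $\gamma$ is the core of $M$. If so, then $M \setminus \gamma \cong N$ covers $M' \setminus \gamma' \cong N$ by a degree-$d$ self-cover, and since a hyperbolic knot complement admits no nontrivial self-cover (Mostow rigidity, as noted above), we reach a contradiction. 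I would try to establish this core-to-core matching using the thick--thin decomposition and Neumann--Zagier length asymptotics: for the finite surviving list the core geodesic lengths are controlled, Margulis tubes must map to Margulis tubes, and the systole and tube data should pin $f^{-1}(\gamma')$ to the single deep tube of $M$.

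The hard part will be exactly this matching in full generality. Volume already removes all but finitely many triples, but killing the residual candidates requires either (i) proving $f^{-1}(\gamma')$ is connected and core-to-core, thereby reducing to the forbidden self-cover of $N$, or (ii) a finer commensurability obstruction --- the invariant trace field, cusp shape, or Epstein--Penner canonical decomposition, all of which a cover must respect --- to contradict the exact relation $\vol(M) = d\,\vol(M')$. The genuine difficulty is the disconnected case, where $\gamma'$ has several preimage geodesics and $\tilde N$ has several cusps; this is the step most likely to need new ideas or to remain conditional, and the length-function and representation-variety techniques of Ni--Wu \cite{NW} and Jeon \cite{J} would be the natural starting point.
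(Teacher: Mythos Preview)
The statement you are attempting to prove is Conjecture~\ref{conj:hyp}, and the paper does \emph{not} prove it. The paper only establishes partial results: Corollary~\ref{cor:bounds} shows that for each hyperbolic knot there are at most $32$ base slopes $p'/q'$ that could possibly be nontrivially covered by another surgery, and Proposition~\ref{prop:hyp2} verifies the conjecture for knots with at most $8$ crossings by computer-assisted case analysis of the hyperbolic volumes and of the finitely many exceptional surgeries. There is no claimed proof of the full conjecture to compare your proposal against.

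Your volume reduction in the hyperbolic case is essentially the content of Corollary~\ref{cor:bounds}: the inequality $\vol(M') \le V_0/2$ together with the Futer--Kalfagianni--Purcell bound (Theorem~\ref{thm:HK}) is exactly how the paper obtains its finite list of candidate base slopes. Where your proposal goes beyond the paper is in the attempt to eliminate the surviving candidates uniformly via the core-geodesic argument, and this is precisely where the genuine gap lies. You correctly identify it yourself: if $f^{-1}(\gamma')$ is disconnected, you have no mechanism forcing it to coincide with the core $\gamma$ of $M$, and hence no reduction to a self-cover of $N$. The thick--thin heuristic you sketch does not close this; a deep Margulis tube in $M'$ can lift to several tubes in $M$, none of which need be the core tube, and the Neumann--Zagier asymptotics give no control on which closed geodesics of $M$ appear as preimages. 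Absent this step, your argument yields at best a per-knot finite check, which is what the paper already provides (with an explicit bound of $32$).

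Your handling of the non-hyperbolic case is also not a proof. Saying you would ``verify directly that none of these finitely many pairs supports a nontrivial cover'' is a computation that depends on the knot; the exceptional fillings of a hyperbolic knot complement are not classified in general, and there is no known uniform obstruction ruling out covers among them. The paper carries out exactly this kind of case-by-case verification, but only for the specific twist knots and pretzel knots arising at crossing number $\le 8$, using Regina and the explicit Seifert and JSJ data in Figures~\ref{fig:seiferttab} and~\ref{fig:toroidaltab}. In short: your proposal recovers the paper's partial results and candidly flags, but does not resolve, the obstacle that keeps Conjecture~\ref{conj:hyp} open.
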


An argument pointed out by a referee of a previous version shows that the following proposition, which is precisely stated later as Corollary \ref{cor:bounds}, is a consequence of \cite[Theorem 1.1]{FKP}.

\begin{proposition}
\label{prop:hyp}
Conjecture \ref{conj:hyp} is true for all but at most $32$ $p'/q'$ slopes on each hyperbolic knot $K \subset S^3$.
\end{proposition}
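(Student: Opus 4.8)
The plan is to turn a non-trivial cover into a volume inequality and then invoke the Dehn filling volume estimate \cite[Theorem 1.1]{FKP}. Write $X = S^3 \setminus K$, a one-cusped finite-volume hyperbolic manifold. By the Lackenby--Meyerhoff bound on exceptional surgeries, at most $10$ slopes $p'/q'$ make $S^3_{p'/q'}(K)$ non-hyperbolic; I put all of these into the exceptional set at the outset and assume henceforth that $S^3_{p'/q'}(K)$ is hyperbolic. Now suppose $f \colon S^3_{p/q}(K) \to S^3_{p'/q'}(K)$ is a non-trivial cover, hence of some degree $d \ge 2$. Then $S^3_{p/q}(K)$ is hyperbolic as well, $\vol S^3_{p/q}(K) = d\cdot \vol S^3_{p'/q'}(K) \ge 2\,\vol S^3_{p'/q'}(K)$, and, since Dehn filling strictly decreases hyperbolic volume (Thurston), $\vol S^3_{p/q}(K) < \vol X$. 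Combining,
\[
\vol S^3_{p'/q'}(K) \;<\; \tfrac12 \vol X .
\]

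Next I would bound the slope $p'/q'$ geometrically. Fix the maximal cusp of $X$ and let $\ell$ be the length of $p'/q'$ on its boundary torus. If $\ell > 2\pi$, then \cite[Theorem 1.1]{FKP} gives
\[
\vol S^3_{p'/q'}(K) \;\ge\; \Bigl(1 - \bigl(\tfrac{2\pi}{\ell}\bigr)^{2}\Bigr)^{3/2} \vol X ,
\]
and comparing with the previous display forces $\bigl(1 - (2\pi/\ell)^{2}\bigr)^{3/2} < \tfrac12$, hence
\[
\ell^{2} \;<\; \frac{(2\pi)^{2}}{1 - 2^{-2/3}} \;=:\; L_0^{2}, \qquad L_0 \approx 10.33 .
\]
(If $\ell \le 2\pi$ the bound already holds.) So, apart from the $\le 10$ exceptional slopes, any slope $p'/q'$ admitting a non-trivial cover from some $S^3_{p/q}(K)$ has maximal-cusp length strictly less than $L_0$.

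Finally I would count these slopes. Writing the cusp torus as $\R^2/\Lambda$, slopes are primitive vectors of $\Lambda$ up to sign and length is the Euclidean norm; maximality of the cusp forces the shortest such vector to have length at least $1$ (Adams), and Cao--Meyerhoff bounds the cusp area from below. A lattice-point estimate using these two inputs---pack pairwise-disjoint radius-$\tfrac12$ disks about the relevant lattice points inside a disk of radius $L_0 + \tfrac12$, divide by the covolume, and discard the imprimitive classes---bounds the number of slopes of maximal-cusp length $< L_0$ by $32$. Since the $\le 10$ exceptional slopes also have small maximal-cusp length, they are already accounted for, and the set of ``bad'' slopes has size at most $32$; this is precisely Corollary \ref{cor:bounds}.

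The step I expect to be the real work is this last count: the crude packing estimate is not tight on its own, and pinning the constant down to $32$ requires the best lower bound available for the maximal cusp area of a hyperbolic knot complement together with a careful, non-asymptotic lattice-point count that correctly removes imprimitive classes and, for optimality, permits varying the horoball neighborhood. A smaller technical point is to confirm the exact form of \cite[Theorem 1.1]{FKP}---whether its hypothesis and estimate are phrased via geometric length on the chosen cusp or via the normalized length $\ell/\sqrt{\mathrm{area}}$---since passing between the two merely rescales $L_0$ and relies on the same cusp-area bound.
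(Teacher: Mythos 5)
Your first half is exactly the paper's argument: multiplicativity of hyperbolic volume under a degree $\geq 2$ cover, Thurston's strict volume decrease under Dehn filling, hence $\vol(K_{p'/q'}) < \tfrac12\vol(S^3-K)$, and then \cite[Theorem 1.1]{FKP} to conclude that any slope admitting a non-trivial cover has cusp length less than $2\pi/\sqrt{1-2^{-2/3}} \approx 10.33$. (Your preliminary reduction via Lackenby--Meyerhoff is harmless but unnecessary; the paper never separates out exceptional slopes.)

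The genuine gap is the final count, which is where the number $32$ actually comes from, and your proposed lattice-point packing cannot deliver it. With only the inputs you name (shortest vector $\geq 1$, cusp area $\geq 2\sqrt{3}$), a disk-packing or covolume count of lattice vectors of norm $< 10.33$ gives on the order of $\pi(10.33)^2/(2\sqrt3) \approx 97$ lattice points, i.e.\ roughly $48$ slope classes even before worrying about primitivity and boundary effects --- so no amount of care in that estimate pins the constant to $32$; you correctly flag this step as ``the real work,'' but the missing ingredient is a different idea, not a sharper version of the same one. The paper instead follows Agol \cite{A}: if $p'/q'$ and $r/s$ both have length less than $10.33$ on the maximal cusp torus $T$, then the geometric intersection number satisfies $|p's - rq'| < (10.33)^2/\mathrm{area}(T) \leq (10.33)^2/(2\sqrt3) < 30.84$, using the Cao--Meyerhoff bound $\mathrm{area}(T)\geq 2\sqrt3$ \cite{CM}; then the number-theoretic statement \cite[Lemma 8.2]{A} says that a collection of slopes with pairwise intersection numbers at most $k$ has size at most $P(k)+1$, where $P(k)$ is the smallest prime exceeding $k$, and $P(30)+1 = 32$. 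So the bound is obtained from a pairwise intersection-number constraint plus Agol's combinatorial lemma, not from counting short vectors in the cusp lattice; without that step your argument proves ``finitely many, with an explicit but larger constant,'' not the stated $32$.
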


Focusing on low crossing number knots, some computations in SnapPy \cite{SnapPy} along with known information about exceptional surgeries on twist knots and pretzel knots give the following.

\begin{proposition}
\label{prop:hyp2}
Conjecture \ref{conj:hyp} is true for all hyperbolic knots with $8$ or fewer crossings.
\end{proposition}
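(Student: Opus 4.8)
The plan is to fix a hyperbolic knot $K\subset S^3$ with at most $8$ crossings (there are only finitely many) and to rule out, for every pair of distinct slopes, a non-trivial covering $\pi\colon S^3_{p/q}(K)\to S^3_{p'/q'}(K)=:M'$ of some degree $d\ge 2$. The first step is to invoke Corollary~\ref{cor:bounds} (the precise form of Proposition~\ref{prop:hyp}) to cut the problem down to a finite, and in practice short, list of candidate base slopes $p'/q'$ for each such $K$. One then splits into two cases according to whether $M'$ is hyperbolic or exceptional; covering degrees and homology give the book-keeping, and SnapPy \cite{SnapPy} does the arithmetic.

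When $M'$ is hyperbolic, $S^3_{p/q}(K)$, being a finite cover of a closed hyperbolic manifold, is itself hyperbolic, so $\vol\big(S^3_{p/q}(K)\big)=d\cdot\vol(M')$. Thurston's hyperbolic Dehn surgery theorem gives $\vol\big(S^3_{p/q}(K)\big)<\vol(S^3\setminus K)=:V_K$, hence $d<V_K/\vol(M')$; in particular only base slopes with $\vol(M')<V_K/2$ can possibly be dangerous, and Corollary~\ref{cor:bounds} bounds how many of these there are. For each such $p'/q'$ (read off from SnapPy) and each admissible $d$, a covering also forces the homological constraints $b_1\big(S^3_{p/q}(K)\big)\ge b_1(M')$ and $p'\mid d\,p$; the latter holds because $\pi_1\big(S^3_{p/q}(K)\big)$ sits in $\pi_1(M')$ with index $d$ while $H_1$ of the two manifolds is $\Z/p$ and $\Z/p'$. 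Since only finitely many slopes $p/q$ satisfy $\vol\big(S^3_{p/q}(K)\big)\le d\cdot\vol(M')$, one enumerates these in SnapPy (using verified, interval-arithmetic volume computations) and checks that none realizes the identity $\vol\big(S^3_{p/q}(K)\big)=d\cdot\vol(M')$ together with the homological constraints.

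When $M'$ is not hyperbolic, $p'/q'$ is an exceptional slope; after the reduction above, the base knots for which this happens are twist knots and pretzel knots, for which the exceptional surgeries are completely classified, giving an explicit finite list of possibilities for $M'$ --- lens spaces, small Seifert fibered spaces, and a few toroidal manifolds. A finite cover of a reducible, toroidal, or Seifert fibered $3$--manifold is again reducible, toroidal, or Seifert fibered, so $S^3_{p/q}(K)$ must also be an exceptional filling of $K$; one then runs through the finitely many pairs of exceptional fillings of $K$ and excludes a covering in each, using: the absence of reducible surgeries on these knots; the structure of covers of Seifert fibered spaces, i.e.\ Proposition~\ref{prop:pullback} and Theorem~\ref{thm:orb} (small Seifert fibered fillings, including lens spaces), together with $p'\mid d\,p$; and a comparison of JSJ pieces for the toroidal fillings.

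I expect the main obstacle to be this exceptional case. One needs the classification of exceptional surgeries to be genuinely complete for every hyperbolic knot with at most $8$ crossings, and then, among the resulting short list of exceptional manifolds, the exclusion of coverings requires honest case analysis --- particularly for the toroidal fillings, where both JSJ pieces and the gluing must be tracked. A secondary, more technical point is certifying the SnapPy computations in the hyperbolic case: one must use verified volume computations and a rigorous lower bound on $\vol\big(S^3_{p/q}(K)\big)$ tending to $V_K$, in order to know when the finite enumeration of candidate covering slopes has actually been exhausted.
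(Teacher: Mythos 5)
Your proposal is correct and follows essentially the same strategy as the paper: Corollary \ref{cor:bounds} together with volume multiplicativity and Thurston's inequality handles the hyperbolic surgeries via SnapPy, while the classified exceptional surgeries on twist and pretzel knots are excluded using Seifert fibered covering obstructions (orbifold Euler characteristic), a rank-of-$H_1$/transfer argument for the $0$-surgeries, and lifting of the geometric decomposition for the toroidal $4$-surgeries. The only organizational difference is that the paper never needs to enumerate candidate covering slopes: its computation shows every short candidate base slope has volume exceeding $\vol(S^3-K)/2$ except $S^3_{\pm 5}(4_1)$ and $S^3_{1}(6_1)$, where the covering degree would have to be $2$ and the odd order of $H_1$ rules out any double cover, so the exhaustion issue you flag never arises.
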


\subsection{Outline of the Paper}
The organization of the paper is as follows. In section \ref{sec:background} we provide some background. In sections \ref{sec:lens} through \ref{sec:realize} we discuss the case of torus knots, proving Theorem \ref{thm:orb} in section \ref{sec:orbcov} and Theorem \ref{thm:torusmain} in section \ref{sec:realize}. In section \ref{sec:hyp} we discuss the case of hyperbolic knots, culminating in the proofs of Propositions \ref{prop:hyp} and \ref{prop:hyp2}.

\subsection{Acknowledgements}
I would like to thank the referee on a previous version for useful comments, Nathan Dunfield, Jessica Purcell, and Cameron Gordon for helpful conversations, and Robert Lipshitz for support, suggestions, and corrections. I would also like to thank Tye Lidman for pointing out an error in a previous version.

\section{Background \label{sec:background}}

All 3-manifolds are assumed compact, connected and orientable, although not oriented. For convience throughout, we will only work with non-trivial positive torus knots $T(r,s)$ with $r,s>0$.

We will use the notation $S^2(\alpha_1, \dots, \alpha_n)$ to mean the orbifold with underlying surface $S^2$, and $n$ cone points points with $\Z/\alpha_i\Z$ isotropy subgroups.  In the 1970s, Moser classified Dehn surgeries on torus knots: 
\begin{theorem}\cite[Theorem 1]{M}
\label{thm:moser}
Let $K$ be the $(r,s)$ torus knot, and $M$ be $S^3_{p/q}(K)$. Then
\begin{enumerate}
\item[(1)] If $|rsq-p| > 1$ then $M$ is a Seifert fiber space with base orbifold $S^2(r,s,|rsq-p|)$.
\item[(2)] If $|rsq-p| = 1$ then $M$ is the lens space $L(p, qs^2)$.
\item[(3)] If $rsq-p = 0$ then $M$ is $L(r,s) \# L(s,r)$. 
\end{enumerate}
\end{theorem}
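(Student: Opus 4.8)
The plan is to exploit the Seifert fibered structure of the torus knot complement. First I would recall that the Heegaard torus $T$ in the splitting $S^3 = V_1 \cup_T V_2$ carries a Seifert fibration of $S^3$ whose regular fibers are the $(r,s)$-curves on $T$ and whose two exceptional fibers are the cores of $V_1$ and $V_2$, of multiplicities $r$ and $s$; the base orbifold is $S^2(r,s)$. Since $K = T(r,s)$ is itself such a regular fiber, the exterior $X = S^3 - N(K)$ inherits a Seifert fibration over the disk orbifold with two cone points of orders $r$ and $s$ and with one torus boundary component. The whole argument then reduces to understanding which manifold results from Dehn filling this Seifert fibered piece.

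The key computation is the slope on $\partial N(K)$ cut out by the regular Seifert fiber $h$. With respect to the standard meridian--longitude basis $(\mu,\lambda)$, where $\lambda$ is the homological longitude, pushing $K$ off itself along $T$ and counting linking shows that $h$ is the slope $rs$, i.e. $h = rs\,\mu + \lambda$ up to sign. A surgery of slope $\gamma = p\,\mu + q\,\lambda$ glues in a solid torus $W$ whose meridian is $\gamma$, and the crucial invariant governing the three cases is the geometric intersection number $\Delta(\gamma, h) = |p\cdot 1 - q\cdot rs| = |rsq - p|$.

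The case analysis now falls out. Whenever $\gamma \neq h$ as slopes, the Seifert fibration of $X$ extends across $W$, introducing an exceptional fiber of multiplicity $|rsq - p|$. If $|rsq-p| > 1$ this gives a Seifert fibration over $S^2(r,s,|rsq-p|)$, which is case (1). If $|rsq-p| = 1$ the new fiber is regular, so the result fibers over $S^2$ with at most two exceptional fibers and is therefore a lens space; computing the Seifert invariants (equivalently, reading the gluing matrix as a continued fraction) pins it down as $L(p, qs^2)$, which is case (2). If $rsq - p = 0$ then $\gamma = h$ is exactly the fiber slope, so filling along the fiber destroys the fibration, and because $X$ fibers over a disk with two cone points the filling produces the connected sum of the lens spaces associated to the two exceptional fibers, namely $L(r,s)\#L(s,r)$, which is case (3). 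For the final claim in (1) that the orientation-preserving homeomorphism type is determined by $p$, I would invoke the classification of Seifert fiber spaces by normalized Seifert invariants together with the computation $H_1 = \Z/p\Z$, checking that the dependence on $q$ alters the Seifert data only by moves absorbed into orientation-preserving equivalence once $p$ is fixed.

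The main obstacle I expect is the bookkeeping behind the precise invariants rather than the qualitative trichotomy: orienting the fibers and base consistently, and converting the gluing data into the exact lens space parameter $qs^2$ in (2) and into the specific summands $L(r,s)$ and $L(s,r)$ in (3), as opposed to merely identifying the homeomorphism type up to the ambiguity left by $H_1$. Getting the fiber slope $rs$ and all the signs coherent is exactly what makes the three regimes $|rsq-p| > 1$, $=1$, $=0$ separate cleanly.
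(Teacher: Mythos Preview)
The paper does not give its own proof of this theorem: it is quoted as background from Moser's original paper \cite[Theorem 1]{M}, so there is nothing to compare against here. Your outline is essentially the classical argument and is correct in structure: the torus knot exterior is Seifert fibered over a disk with two cone points of orders $r$ and $s$, the regular fiber has slope $rs$ on $\partial N(K)$, and the trichotomy in $|rsq-p|$ records whether the filling adds a genuine exceptional fiber, a regular fiber, or kills the fibration. Your caveat about the bookkeeping is well placed; the precise identifications $L(p,qs^2)$ and $L(r,s)\#L(s,r)$ do require careful tracking of framings and orientations, but the method you describe is exactly how it is done.
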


Note that $L(-m,n)$ is understood to mean $L(m,-n)$ when $m > 0$, and that since $p/q = -p/(-q)$ give the same surgery, it can always be arranged that $rsq-p \geq 0$. Note that we are only considering manifolds up to orientation reversing homeomorphism. 

Let $M$ be an oriented Seifert fiber space with base orbifold $S^2(\alpha_1, \dots , \alpha_n)$ and Seifert invariants $b, \{(\alpha_i,\beta_i)\}$. For convenience we will not require the normalization $0 < \beta_i < \alpha_i$. We will use the standard notation
\[
\{b;(o_1,0);(\alpha_1, \beta_1), \dots ,(\alpha_n,\beta_n)\}.
\]
Throughout, we will omit the $(o_1,0)$ term, which indicates that the base orbifold is $S^2$ and that $M$ is orientable, since this will be true for all of our Seifert fiber spaces. For more information see \cite{JN}.
It will be useful to recall some facts about orbifold covers and Seifert fiber spaces. We use Thurston's definition of a covering map of orbifolds, see \cite[Chapter 13]{T}. 

\begin{definition}
The \emph{orbifold Euler characteristic} of a compact 2-dimensional orbifold $\Sigma$ with underlying manifold $S$, $r$ corner reflectors of orders $\{n_i\}$ and $s$ cone points of orders $\{m_j\}$ is 
\[
\chi(\Sigma) := \chi(S) - \frac{1}{2} \sum_{i=1}^{r}\bigg{(}1-\frac{1}{n_i} \bigg{)}- \sum_{j=1}^{s} \bigg{(}1-\frac{1}{m_j}\bigg{)}.
\]
\end{definition}
Note that by the Riemann-Hurwitz formula, $\chi(\Sigma)$ is multiplicative under finite covers. In the case at hand, suppose $S^2(a,b,c) \to S^2(a',b',c')$ is a covering space of degree $d$. Then 
\[
\chi(S^2) - \bigg{(}1-\frac{1}{a} \bigg{)} - \bigg{(}1-\frac{1}{b} \bigg{)} - \bigg{(}1-\frac{1}{c} \bigg{)} = d \bigg{(} \chi(S^2) - \bigg{(}1-\frac{1}{a'} \bigg{)} - \bigg{(} 1-\frac{1}{b'}\bigg{)} - \bigg{(} 1-\frac{1}{c'} \bigg{)} \bigg{)}.
\] 
More succinctly,
\begin{align}
\frac{1}{a} + \frac{1}{b} + \frac{1}{c} - 1 = d \bigg{(} \frac{1}{a'} + \frac{1}{b'} + \frac{1}{c'} - 1 \bigg{)}.
\end{align}
Additionally, looking at the preimages of the orbifold points $a', b',$ and $c'$, there is an obvious condition on $d$ which we will now describe. 

For any partition $\lambda_a = \{a_1, \dots a_n\}$ of $d$ where $a_i | a$, let $\lambda^a$ refer to the set $\{ a/a_1, \dots a/a_n\}$. Now observe that given a cover $S^2(a,b,c) \to S^2(a',b',c')$ of degree $d$, there exist partitions $\lambda_{a'}, \lambda_{b'}$ and $\lambda_{c'}$ of $d$ by divisors of $a', b'$, and $c'$ respectively so that the union $\lambda^{a'} \cup \lambda^{b'} \cup \lambda^{c'}$ consists entirely of 1s except for a single $a$, $b$, and $c$. We will refer to this as the \emph{partition condition} for orbifold covers. 

\begin{definition}
A \emph{Seifert neighborhood} of a fiber $\gamma$ in a Seifert fiber space is a fiber preserving and orientation preserving homeomorphism from a neighborhood of $\gamma$ to $I \times D^2 / \sim$ where $(0,z) \sim (1, e^{2 \pi i q/ p }z)$ for some pair of relatively prime integers $p$ and $q$, and the fibers are cycles of vertical fibers $I \times *$. Once such a homeomorphism is fixed we will refer to such a neighborhood as $N_{\frac{q}{p}}(\gamma)$.
\end{definition}
By definition a Seifert neighborhood exists for every fiber, and $p$ is the \emph{index} of the fiber. A fiber is \emph{regular} if $p = 1$ and \emph{singular} otherwise.

\begin{definition} Given a covering $f: \widetilde{M} \to M$, a \textit{pre-regular} fiber $\gamma \subset \widetilde{M}$ is a Seifert fiber of $\widetilde{M}$ such that $f(\gamma)$ is a regular fiber of $M$. A \textit{pre-singular} fiber $\gamma$ is one such that $f(\gamma)$ is a singular fiber of $M$.
\end{definition}

The following is a restatement of an observation in \cite{M}, which will be needed to discuss realizations of Seifert fiber spaces as surgeries on specific torus knots. We assume throughout that $r,s>0$.
\begin{lemma}
\label{lem:CRT}
Fix a torus knot $T(r,s)$. If $p/q$ surgery on $T(r,s)$ is a small Seifert fiber space, then the $b$ and $(\alpha_i,\beta_i)$ Seifert invariants are numerically determined by $r,s,p,$ and $q$.

\end{lemma}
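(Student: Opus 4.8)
The plan is to recover the complete Seifert structure of $M:=S^3_{p/q}(T(r,s))$ from three ingredients: Moser's theorem for the base orbifold, the fact that the Seifert fibration of $M$ restricts to the (essentially unique) Seifert fibration of the torus knot exterior $E:=S^3\setminus\nu(T(r,s))$, and the order of $H_1(M)$ for the framing. First, since $p/q$ surgery is a small Seifert fiber space it is neither a lens space nor reducible, so Moser's cases $(2)$ and $(3)$ are excluded; thus $n:=|rsq-p|>1$ and $p\neq 0$, and Theorem~\ref{thm:moser}$(1)$ identifies the base orbifold as $S^2(r,s,n)$. Hence, up to reordering, $\{\alpha_1,\alpha_2,\alpha_3\}=\{r,s,n\}$ and $M=\{b;(r,\beta_1),(s,\beta_2),(n,\beta_3)\}$ for some integers $b,\beta_i$; it remains to pin these down from $r,s,p,q$, necessarily only up to the standard moves $(\alpha_i,\beta_i)\mapsto(\alpha_i,\beta_i+\alpha_i),\ b\mapsto b-1$, reordering the pairs, and overall orientation reversal, all of which we tolerate.

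Next I would determine $(r,\beta_1)$ and $(s,\beta_2)$. The fibration of $M$ restricts to a Seifert fibration of $E$ over $D^2(r,s)$, which carries a unique such fibration, so its two exceptional pairs depend only on $r$ and $s$: concretely, $E$ embeds in $S^3=\{0;(r,\beta_1),(s,\beta_2)\}$ with $T(r,s)$ a regular fiber, and $|H_1(S^3)|=|s\beta_1+r\beta_2|=1$ fixes the residues $\beta_1\bmod r$ and $\beta_2\bmod s$ (the sign is fixed by taking $T(r,s)$ rather than its mirror, which is harmless here). I would also record the classical facts that, in a basis $(\mu,\lambda)$ of $H_1(\partial E)$, the regular fiber is $rs\mu+\lambda$ and the meridian $\mu$ serves as a section curve, since $\langle\mu,\,rs\mu+\lambda\rangle=1$.

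Then $(n,\beta_3)$ falls out of writing the surgery slope in the (fiber, section) basis: $p\mu+q\lambda=q(rs\mu+\lambda)+(p-qrs)\mu$, so Dehn filling along it adds an exceptional fiber of order $|p-qrs|=n$ — recovering Moser's distance formula — with $\beta_3\equiv\varepsilon q\pmod n$, where $\varepsilon$ is the sign of $p-qrs$; every quantity here is an explicit function of $r,s,p,q$. Finally the rational Euler number $e(M)=-\bigl(b+\tfrac{\beta_1}{r}+\tfrac{\beta_2}{s}+\tfrac{\beta_3}{n}\bigr)$ satisfies $rsn\,|e(M)|=|H_1(M)|=|p|$, with sign fixed by the orientation convention, so $e(M)$ is determined, and since $\beta_1,\beta_2,\beta_3$ are now known, $b$ is the unique integer realizing this $e(M)$; integrality of the resulting $b$ is automatic. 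Thus $b$ and all the $(\alpha_i,\beta_i)$ are numerically determined by $r,s,p,q$, as claimed. (One can alternatively, once $\beta_1\bmod r$ and $\beta_2\bmod s$ are in hand, extract $\beta_3\bmod n$ directly from $|H_1(M)|=|p|$ and the Euler-number relation by splitting the congruence modulo $rsn$ into independent congruences modulo $r$, $s$, $n$ via the Chinese Remainder Theorem — the source of the lemma's name — but this shortcut degenerates when $n$ shares a factor with $r$, $s$, or $p$, whereas the change-of-basis computation is uniform.)

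The one real difficulty I anticipate is bookkeeping: fixing the orientation and framing conventions so that the sign $\varepsilon$ in $\beta_3$ and the sign of $e(M)$ are unambiguous, and verifying that the argument is not derailed in the non-coprime cases, where the genuinely geometric computation of $\beta_3$ (or, equivalently, a direct appeal to Moser's explicit formula in \cite{M}) is required rather than the $H_1$-based shortcut.
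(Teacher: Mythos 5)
Your argument is correct and amounts to the computation the paper simply delegates to its references: the paper's proof of Lemma \ref{lem:CRT} is ``See \cite{M} or \cite[Section 2.5]{GL}'', and what you write out --- extending the (unique) Seifert fibration of the torus-knot exterior over the filling torus to read off the third pair $(|rsq-p|,\pm q)$, with $(r,\beta_1),(s,\beta_2)$ fixed by $s\beta_1+r\beta_2=\pm 1$, and then pinning down $b$ via $|H_1(M)|=|p|$ and the Euler number --- is precisely Moser's computation, with the sign/normalization ambiguities you flag being exactly those the paper tolerates (unnormalized invariants, manifolds up to orientation-reversing homeomorphism). One cosmetic remark: under the paper's convention lens spaces also count as small Seifert fiber spaces, but your computation handles that case verbatim with $n=|rsq-p|=1$, so excluding Moser's case (2) at the outset is unnecessary rather than harmful.
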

\begin{proof} See \cite{M} or \cite[Section2.5]{GL}.
\end{proof}

\begin{definition}
A \emph{Seifert cover} is a covering map of Seifert fiber spaces which takes fibers to fibers.
\end{definition}

\section{Lens spaces and connect sums of lens spaces \label{sec:lens}}
In this section we will resolve Conjecture \ref{conj:vcsc} in the case when the base space is a lens space or a connect sum of lens spaces. That is, we consider case (2) in Theorem \ref{thm:moser}.
\begin{lemma}
Let $M$ and $M'$ be obtained from Dehn surgery on a torus knot $K$ which is not the unknot. Then if either $M$ or $M'$ is of type (2) in Moser's classification, then there is no covering map $f: M \to M'$.
\end{lemma}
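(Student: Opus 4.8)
The plan is to case-split according to Moser's classification (Theorem~\ref{thm:moser}) applied to whichever of $M$, $M'$ is of type (2), and to use the basic fact that a covering map $f\colon M \to M'$ of degree $d$ induces a multiplicative relation $|H_1(M)| = d \cdot |H_1(M')|$ when both are rational homology spheres (and that $M$ must be a rational homology sphere if $M'$ is one, and conversely $b_1$ is non-increasing under finite covers), together with the fact that finite covers of lens spaces are lens spaces or $S^3$ and that $\pi_1$ of a lens space is finite cyclic. Recall that a type (2) surgery is a lens space $L(p,qs^2)$ with $|rsq - p| = 1$, hence $p = rsq \pm 1$; since $K$ is a nontrivial torus knot $r,s \geq 2$, so $|p| = |rsq \pm 1| \geq 2rs - 1 \geq 7 > 1$, and in particular $L(p,qs^2)$ is a genuine lens space with $\pi_1$ nontrivial finite cyclic.

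First I would handle the case $M' $ of type (2), so $M'$ is a lens space $L(p',q's^2)$ with nontrivial finite cyclic $\pi_1$. Then $\pi_1(M)$ injects (up to finite index $d$) as a subgroup, so $\pi_1(M)$ is finite, forcing $M$ to be a spherical space form; consulting Moser's list, $M$ is then either a lens space (type (2)) or, in type (1), a small Seifert fiber space with base $S^2(r,s,|rsq-p|)$ whose orbifold Euler characteristic is positive, i.e.\ $\tfrac1r + \tfrac1s + \tfrac1{|rsq-p|} > 1$. Since $r,s\geq 2$ with at least one $\geq 3$ (a nontrivial torus knot has $\{r,s\}\neq\{2,2\}$, in fact $\gcd(r,s)=1$ so not both even), the only way to get a spherical base orbifold is $|rsq-p| = 1$, which is again type (2) — or the degenerate cases where the base orbifold has at most two cone points, but $r,s\geq 2$ both genuine cone points rules out the $(1,x,y)$ rows. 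So it suffices to rule out a cover $L(a,b) \to L(p',q's^2)$ between two lens spaces coming from surgery on the same nontrivial $K$. Here I would use the homology count: $|a| = d|p'|$ with $d = [\pi_1(L(a,b)):\pi_1(L(p',q's^2))]$, but a subgroup of a cyclic group of order $|p'|$ has order dividing $|p'|$, so the only finite cyclic covers of $L(p',\cdot)$ have total space $L(a,\cdot)$ with $a \mid p'$, giving $|a| \leq |p'|$; combined with $|a| = d|p'| \geq |p'|$ this forces $d = 1$ and $a = p'$, i.e.\ a homeomorphism — but then by Moser's classification ($p$ determines the surgery among type (2) surgeries, and $L(p,qs^2) \cong L(p,q'{s}^2)$ forces $q \equiv q' \bmod p$ via $s$ invertible mod $p$, hence $p/q = p'/q'$), contradicting $M \neq M'$. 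Actually more carefully: any cover of a lens space is $S^3$ or a lens space, and $S^3$ has trivial $H_1$ whereas $|a|\geq |p'| \geq 7$, so $S^3$ is excluded too; so this case is done.

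Next, the case $M$ of type (2): then $\pi_1(M)$ is finite cyclic of order $|p| \geq 7$, and $\pi_1(M')$ is a quotient-group-index-$d$ ``sub-up-to-conjugacy'' — more precisely the covering gives an index-$d$ subgroup $f_*\pi_1(M) \leq \pi_1(M')$, so $\pi_1(M')$ is finite, $M'$ is spherical, and as above $M'$ is type (2) (a lens space), reducing to the already-handled lens-space-to-lens-space situation, \emph{or} $M'$ is of type (1) with spherical small base orbifold which — by the same $r,s\geq 2$, $\gcd(r,s)=1$ argument — again forces $|rsq'-p'|=1$, i.e.\ type (2). Hence every sub-case collapses to ruling out $L \to L'$, which the homology/cyclic-subgroup argument above already does. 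Finally, the case where both $M$ and $M'$ are type (2) is literally the lens-space-to-lens-space case handled first.

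The main obstacle I anticipate is being careful about exactly which type (1) Seifert fiber spaces of a \emph{nontrivial} torus knot can have finite fundamental group: one must invoke that $T(r,s)$ nontrivial means $r,s\geq 2$ with $\gcd(r,s)=1$, so the base orbifold $S^2(r,s,|rsq-p|)$ genuinely has at least the two cone points of orders $r,s$ (not both $2$), and verify that positivity of the orbifold Euler characteristic $\tfrac1r+\tfrac1s+\tfrac1{|rsq-p|}-1 > 0$ then forces $|rsq-p|\leq 1$; since $|rsq-p|=0$ is type (3) (not a rational homology sphere at all, so it has no cover by a manifold with finite $\pi_1$ unless... — actually $L(r,s)\#L(s,r)$ has finite $\pi_1$ only if it is $S^3$, which it is not since $r,s\geq 2$), we land in $|rsq-p|=1$, type (2). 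A secondary subtlety is the orientation issue — the statement does not assume orientation-preserving covers — but since the whole argument is via $|H_1|$ and finiteness of $\pi_1$, orientations play no role. I would also double-check the edge possibility that $M$ or $M'$ could be $S^3$ itself, which is excluded because a type (2) manifold has $|H_1| = |p| \geq 7 \neq 1$.
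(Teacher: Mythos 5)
Your central homological step runs in the wrong direction, and this is fatal to the whole plan. If $f\colon M \to M'$ is a covering of degree $d$, then $\pi_1(M)$ embeds in $\pi_1(M')$ with index $d$, so for these spaces it is the \emph{covered} manifold that has the larger group: $|H_1(M')| = d\,|H_1(M)|$, not $|H_1(M)| = d\,|H_1(M')|$ as you assert (compare the proof of Theorem \ref{thm:torusmain}, where $d\cdot|H_1(\widetilde{M})| = |H_1(M)|$ for the cover $\widetilde{M}\to M$, and Lemma \ref{lem:lensspaces}, where the covering lens space is $L(p'/d,x)$). Your conclusion ``$d=1$'' in the lens-to-lens case rests entirely on this reversal; with the correct direction nothing is excluded, and in fact such covers exist: $S^3_{5}(T(2,3)) = L(5,4)$ is a $7$-fold cover of $S^3_{35/6}(T(2,3)) = L(35,19)$, both of type (2). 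A second false step is the claim that $r,s\geq 2$ coprime forces a spherical base orbifold $S^2(r,s,|rsq-p|)$ to have $|rsq-p|=1$: the spherical triangle orbifolds $S^2(2,3,3)$, $S^2(2,3,4)$, $S^2(2,3,5)$ and the dihedral $S^2(2,s,2)$ all occur as base orbifolds of type (1) surgeries, these have finite (non-cyclic) fundamental group, and they can be covered by lens-space surgeries on the same knot --- Example \ref{ex:cover} exhibits $S^3_{5}(T(2,3))$ covering $S^3_{45/7}(T(2,3))$, whose base orbifold is $S^2(2,3,3)$. So both halves of your reduction (``finite $\pi_1$ forces type (2)'' and then ``no nontrivial lens-to-lens covers'') fail; indeed the statement read literally is contradicted by the paper's own Lemma \ref{lem:lensspaces} and Example \ref{ex:cover}, so no argument along these lines could close it.

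You should also be aware that the paper's own proof is of a completely different nature and does not engage with lens spaces at all: by Theorem \ref{thm:moser} the unique reducible surgery on a nontrivial torus knot is $S^3_{rs}(T(r,s)) = L(r,s)\#L(s,r)$, every other surgery is an irreducible Seifert fiber space over $S^2$, and by the sphere theorem (since $\pi_2$ is preserved by covers) a cover of a reducible manifold is reducible; this rules out covers involving the connected-sum surgery. That is what the printed proof establishes, and it is how the lemma functions in the section: covers between the lens-space (type (2)) surgeries are then \emph{classified}, not excluded, in Lemma \ref{lem:lensspaces}. So beyond the reversed multiplicativity, your proposal is aimed at the wrong target; fixing it would mean either proving the divisibility classification of Lemma \ref{lem:lensspaces} for the lens-space case, or reproducing the irreducibility argument for the reducible surgery, rather than a blanket nonexistence claim whenever a type (2) surgery is involved.
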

\begin{proof}
On a non-trivial torus knot $T(p,q)$ there is a unique reducible surgery $S^3_{pq/1}(T(p,q))$ by Theorem \ref{thm:moser}. Indeed, all other surgeries Seifert fiber spaces over $S^2$ (and are not $S^2 \times S^1$, since $T(p,q)$ is non-trivial), and hence are irreducible. However, by the sphere theorem any cover of a reducible 3-manifold is reducible, since $\pi_2$ is preserved by covers.
\end{proof}

\begin{lemma}
\label{lem:lensspaces}
If $L(p,q)$ and $L(p',q')$ are lens spaces obtained from surgeries on the same torus knot, then $L(p,q)$ covers $L(p',q')$ if and only if $p$ divides $p'$.
\end{lemma}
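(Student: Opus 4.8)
The plan is to treat the two implications separately, using covering space theory for the necessity of $p \mid p'$ and an explicit construction, informed by Moser's formula, for sufficiency.

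For the forward direction, suppose $f\colon L(p,q) \to L(p',q')$ is a covering map of degree $d$. Since a covering map of connected manifolds induces an injection on fundamental groups whose image has index equal to the degree, $f_*$ embeds $\pi_1(L(p,q)) = \Z/p\Z$ as an index-$d$ subgroup of $\pi_1(L(p',q')) = \Z/p'\Z$. A cyclic group of order $p$ embeds in a cyclic group of order $p'$ exactly when $p \mid p'$, in which case the index is $p'/p$. This yields $p \mid p'$ (and incidentally $d = p'/p$, matching Theorem \ref{thm:torusmain}), and notably requires no use of the torus knot hypothesis.

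The reverse direction is where the torus knot structure is essential, since $p \mid p'$ alone does not force a cover between arbitrary lens spaces: one also needs the second parameters to agree modulo $p$ up to the standard equivalences $L(p,a) \cong L(p, \pm a^{\pm 1})$. My plan is to control these parameters via Theorem \ref{thm:moser}(2). Writing $L(p,q)$ and $L(p',q')$ as the lens-space surgeries $P/Q$ and $P'/Q'$ on $T(r,s)$, Moser gives $p = P$, $q = Qs^2$ with $|rsQ - P| = 1$, and similarly for the primed data. From $rsQ - P = \pm 1$ I would deduce $\gcd(r,p) = 1$ and, multiplying $rsQ \equiv \pm 1 \pmod p$ by $s$, the congruence $rq \equiv \pm s \pmod p$, i.e. $q \equiv \pm s r^{-1} \pmod p$. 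The same computation gives $q' \equiv \pm s r^{-1} \pmod{p'}$, and since $p \mid p'$ the mod-$p'$ inverse of $r$ reduces to its mod-$p$ inverse, so that $q' \equiv \pm q \pmod p$.

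Finally I would produce the cover explicitly. Realizing $L(p',q') = S^3/(\Z/p'\Z)$ for the standard linear action, the order-$p$ subgroup $\Z/p\Z \subset \Z/p'\Z$, which exists precisely because $p \mid p'$, has quotient $S^3/(\Z/p\Z) = L(p, q' \bmod p)$, and the inclusion of deck groups induces a covering $L(p, q'\bmod p) \to L(p',q')$ of degree $p'/p$. By the previous paragraph $q' \equiv \pm q \pmod p$, so $L(p,q) \cong L(p, q' \bmod p)$ up to orientation; precomposing the covering with this homeomorphism yields the desired covering $L(p,q) \to L(p',q')$. The main obstacle is exactly this reverse-direction bookkeeping: the lens-space covering criterion is genuinely more restrictive than divisibility of the orders, so the argument must extract from Moser's classification that every lens-space surgery on a fixed $T(r,s)$ has second parameter $\pm s r^{-1}$ modulo $p$, up to sign and inversion.
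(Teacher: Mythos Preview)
Your proposal is correct and follows essentially the same approach as the paper: the forward direction uses the standard covering-space/lens-space fact that covers of $L(p',q')$ are exactly $L(p'/d,q')$ for $d\mid p'$, and the reverse direction uses Moser's classification to show that both second parameters are congruent to $\pm s r^{-1}$ modulo the smaller order, whence the needed lens-space identification. Your write-up is slightly more explicit about the $S^3/(\Z/p'\Z)$ model for the covering, but the underlying argument is the same.
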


\begin{proof}
The lens space $L(p',x)$ has a unique cover for each divisor $d$ of $p'$, and that cover is $L(p'/d,x)$, so the only if direction is clear. On the other hand, looking at which lens spaces are possible as surgeries on the same torus knot, we get from (2) in Theorem \ref{thm:moser} that gcd$(r,p') = 1$ and that $q'rs \equiv 1$ mod $p'$, after choosing $p',q'$ so that $rsq' +p' \geq 0$. Hence we can write $q's^2$ as $sr^{-1}$ mod $p'$. 

Now suppose that $L(p',x)$ and $L(p'/d,y)$ occur as $(p',q')$ and $(p,q)$ surgery respectively on the same torus knot, so that $x = q's^2$ and $y = qs^2$. Then $qrs \equiv \pm 1$ mod $p$ so that $x \equiv \pm sr^{-1}$ mod $p$ (and the same for $y$), giving $x \equiv \pm y$ mod $p$. Then by the classification of (unoriented) lens spaces $L(p'/d,y) \cong L(p'/d,x)$, and so $L(p'/d,y)$ covers $L(p',x)$. 
\end{proof} 

Since the only covers of lens spaces are lens spaces, this finishes the case where the base 3-manifold is a lens space.

\section{Covers of Seifert fiber spaces}

Throughout this section let $M$ be an orientable Seifert fiber space with the underlying surface of the base orbifold $S^2$, i.e. $M \cong \{b; (\alpha_1, \beta_1), \dots, (\alpha_n, \beta_n)\}$. Let $f:\widetilde{M} \to M$ be a covering map. Then there is an induced Seifert fiber structure on $\widetilde{M}$ where the fibers are the preimages of the fibers in $M$; see for example \cite[lemma 8.1]{JN}. In particular, there is a choice of Seifert fiber structure on $\widetilde{M}$ so that $f$ is a Seifert cover. Note however, that $\widetilde{M}$ may have other Seifert fiber structures for which $f$ is not even homotopic to a Seifert cover. Similar results to those in this section are observed in \cite[Section 2]{Hu}.

\begin{definition}
A \textit{fiberwise cover} is a Seifert cover $f: \widetilde{M} \to M$ for which the preimage of each fiber of $M$ is a single fiber of $\widetilde{M}$. 
\end{definition}

We will observe below that fiberwise covers induce an isomorphism between the base orbifolds.

\begin{definition}
A \textit{pullback cover} is a Seifert cover $f: \widetilde{M} \to M$ which induces a covering map $f_*: \widetilde{\Sigma} \to \Sigma$ of base orbifolds with deg$(f) = $ deg$(f_*)$. 
\end{definition}

\begin{remark}
The term \textit{pullback} is justified by the following proposition, which implies the universal property, and hence uniqueness, of such covers.
\end{remark}

\begin{proposition}
\label{prop:pullback}
Given a cover of Seifert fiber spaces $f: \widetilde{M} \to M$, $f$ factors as a composition of a fiberwise cover $f_2: \widetilde{M} \to \overline{M}$ and a pullback cover $f_1: \overline{M} \to M$. In particular, $f$ induces a covering map of base orbifolds $\widetilde{\Sigma} \to \Sigma$. This is notated as

\[
\begin{tikzcd}
S^1 \arrow{r}{\mbox{\tiny{deg}}(f_2)} \arrow{d} & S^1 \arrow{r}{\mbox{\tiny{id}}} \arrow{d} & S^1 \arrow{d} \\
\widetilde{M} \arrow{r}{f_2} \arrow{d} & \overline{M} \arrow{d} \arrow{r}{f_1} & M \arrow{d}{\rho} \\
\widetilde{\Sigma} \arrow{r}{\mbox{\tiny{id}}} & \widetilde{\Sigma} \arrow{r}{\mbox{\tiny{deg}}(f_1)} & \Sigma,
\end{tikzcd}
\]
where $\overline{M}$ is the pullback of the bottom right square, the columns are Seifert fibrations and the bottom row are the base orbifolds. The top left $S^1$ is a pre-regular fiber of $\widetilde{M}$. 
\end{proposition}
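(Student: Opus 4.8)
The plan is to construct the intermediate space $\overline{M}$ as the pullback of $\rho$ along a covering $f_*\colon\widetilde{\Sigma}\to\Sigma$ of base orbifolds, so the whole statement rests on first producing $f_*$ and checking it is an orbifold covering. First I would observe that, since $\widetilde{M}$ carries the Seifert structure for which $f$ is a Seifert cover, $f$ takes fibers to fibers; hence $\rho\circ f\colon\widetilde{M}\to\Sigma$ is constant on the fibers of $\widetilde{M}$ and descends through the fiber-collapsing projection $\widetilde{\rho}\colon\widetilde{M}\to\widetilde{\Sigma}$ to a continuous map $f_*\colon\widetilde{\Sigma}\to\Sigma$ with $\rho\circ f=f_*\circ\widetilde{\rho}$. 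The hard part is then to show $f_*$ is an orbifold covering of the asserted degree: a priori it could behave like a genuine branched cover near a cone point, and ruling this out is where the covering hypothesis on $f$ is really used.

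I would establish this locally on $\Sigma$. Over a fiber $\gamma$ of $M$ with Seifert neighborhood $N=N_{\beta/\alpha}(\gamma)$ --- whose image in $\Sigma$ is a disk $D$ carrying a cone point of order $\alpha$, or a smooth disk if $\gamma$ is regular --- I would take a component $\widetilde{N}$ of $f^{-1}(N)$, which is itself a Seifert neighborhood $N_{\widetilde{\beta}/\widetilde{\alpha}}(\widetilde{\gamma})$ of a fiber of $\widetilde{M}$, and analyze the fiber-preserving covering $f\colon\widetilde{N}\to N$ of model fibered solid tori. The standard structure of such coverings (cf.\ \cite{JN}) expresses it, up to fiber-preserving homeomorphism, as a covering that wraps only in the fiber direction followed by the pullback of $N\to D$ along an orbifold covering of disks branched only at the cone point; in particular $\widetilde{\alpha}\mid\alpha$ and the induced map of base disks is an honest orbifold covering. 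Carrying this out over the finitely many singular fibers together with one regular fiber in each component of the complement assembles to show $f_*$ is an orbifold covering; counting sheets over a regular value gives $\deg f_*\le\deg f$, with equality precisely when $f$ is already a pullback cover. I expect this local analysis of fiber-preserving covers of fibered solid tori to be the main technical obstacle.

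Finally, with $f_*$ in hand I would set $\overline{M}$ to be the pullback $M\times_{\Sigma}\widetilde{\Sigma}$, i.e.\ the Seifert fibration over $\widetilde{\Sigma}$ obtained by pulling back the fibration of $M$ along the orbifold cover $f_*$. Since $f_*$ is an orbifold covering and $\rho$ a Seifert fibration, $\overline{M}$ is again an orientable Seifert fiber space fibered over $\widetilde{\Sigma}$, and the projection $f_1\colon\overline{M}\to M$ is a covering with $\deg f_1=\deg f_*$, a pullback cover by construction. Because $\rho\circ f=f_*\circ\widetilde{\rho}$, the universal property of the pullback (equivalently, standard lifting theory for coverings) yields a unique $f_2\colon\widetilde{M}\to\overline{M}$ with $f_1\circ f_2=f$ and $\overline{\rho}\circ f_2=\widetilde{\rho}$, where $\overline{\rho}$ is the fibration of $\overline{M}$; as $f$ and $f_1$ are coverings, so is $f_2$, of degree $\deg f/\deg f_*$. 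The identity $\overline{\rho}\circ f_2=\widetilde{\rho}$ then gives $f_2^{-1}(\overline{\rho}^{-1}(q))=\widetilde{\rho}^{-1}(q)$ for all $q\in\widetilde{\Sigma}$, and since each of $\widetilde{\rho},\overline{\rho}$ has a single circle fiber over $q$, the $f_2$-preimage of a fiber of $\overline{M}$ is exactly one fiber of $\widetilde{M}$, on which $f_2$ restricts to an $S^1\to S^1$ cover of degree $\deg f_2$. Hence $f_2$ is a fiberwise cover and $f_1$ a pullback cover, which is the desired factorization; taking the distinguished $S^1$ to be a fiber of $\widetilde{M}$ lying over a point that $f_*$ sends to a regular point of $\Sigma$ makes it pre-regular, and the labels in the commutative diagram follow.
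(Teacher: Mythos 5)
Your proposal follows essentially the same route as the paper: you construct the induced map on base orbifolds, verify it is an orbifold covering by a local analysis of fiber-preserving covers of Seifert (fibered solid torus) neighborhoods, then take the fiber product $M\times_{\Sigma}\widetilde{\Sigma}$ as $\overline{M}$ and factor $f$ through it, checking that the two pieces are a pullback cover and a fiberwise cover. The only cosmetic difference is that where you defer the local step to the standard structure of coverings of fibered solid tori (citing \cite{JN}), the paper carries it out via its Lemma \ref{lem:rotation}, which shows the relevant deck transformations act by rotations on both coordinates; the content is the same.
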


To prove this proposition, we use the following lemma describing the local behavior. 

\begin{lemma}
\label{lem:rotation}
Given a Seifert cover $f: \widetilde{N} \to N$ of Seifert neighborhoods, the covering map is equivalent (as covering spaces) to one whose deck transformation groups acts as rotation on both coordinates of $\partial \widetilde{N}$. Furthermore, $f$ is determined (up to covering space isomorphism) by this action on the boundary. 
\end{lemma}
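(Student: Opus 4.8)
I would prove Lemma~\ref{lem:rotation} by working directly with the model $I \times D^2/\!\sim$ for a Seifert neighborhood. Write $N = N_{q/p}(\gamma)$, so $N \cong I \times D^2/((0,z)\sim(1,e^{2\pi i q/p}z))$, and let $\widetilde N$ be the total space of the Seifert cover. Since $\widetilde N$ is itself a Seifert neighborhood (the cover is a Seifert cover, and a connected cover of a fibered solid torus fibered over a disk-orbifold is again a fibered solid torus), write $\widetilde N = N_{\tilde q/\tilde p}(\tilde\gamma)$ in its own model coordinates. The first step is to understand the cover restricted to the base orbifolds: the base of $N$ is the disk orbifold $D^2(p)$ with one cone point of order $p$ (possibly $p=1$), and $f$ induces an orbifold cover $D^2(\tilde p) \to D^2(p)$. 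Any connected orbifold cover of $D^2(p)$ is, up to equivalence, the map $D^2 \to D^2$, $w \mapsto w^{p/\tilde p}$ on the disk (branched only over the cone point), so in suitable coordinates the induced base map is a rotationally symmetric branched cover. This is the key normalization that lets me put $f$ in standard form.

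\emph{Second step: lift to the fibered neighborhoods.} Having normalized the base map, I lift the $S^1$-fiber direction. A Seifert cover restricted to a regular fiber is an honest covering $S^1 \to S^1$, i.e.\ $t \mapsto t^{e}$ for some fiber-degree $e$, which is rotationally symmetric in the fiber coordinate. Combining with the base normalization, the total degree factors as $d = e \cdot (p/\tilde p)$, and the map $\widetilde N \to N$ is, up to fiber-preserving homeomorphism of source and target, the product-like map $(t,w)\mapsto (t^{e}, w^{p/\tilde p})$ expressed in the two glued solid-torus models (with the gluing data $\tilde q/\tilde p$ on the source compatible with $q/p$ on the target via the partition/index bookkeeping already implicit in the paper). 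In particular on $\partial\widetilde N = T^2 \to \partial N = T^2$ the map is linear in the two coordinates $(\text{fiber}, \text{base meridian direction})$ — i.e.\ acts as rotation on both coordinates — and the deck group, being the group of covering transformations of this linear torus map extended over the solid torus, acts by rotations in both coordinates as well.

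\emph{Third step: the "determined by the boundary action" claim.} For this I would argue that a fiber-preserving self-homeomorphism of $N$ that is the identity on $\partial N$ is fiber-isotopic to the identity (the mapping class group of a fibered solid torus rel boundary, respecting the fibration, is trivial), so two Seifert covers of $N$ with the same boundary behavior differ by post-composition with such a homeomorphism and are therefore isomorphic as covering spaces. Equivalently: the connected cover is classified by the induced map $\pi_1(\partial N) \to \pi_1(N)$ together with the injection $\pi_1(N) \hookrightarrow$ (since $\pi_1(N)\cong\Z$ is abelian, a cover is determined by a finite-index subgroup, and the reduction to the boundary is exactly the data of the two rotation numbers plus the index), so the rotation action on $\partial\widetilde N$ pins down the isomorphism type.

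\emph{Main obstacle.} I expect the delicate point to be the bookkeeping relating the gluing parameter $\tilde q/\tilde p$ on $\widetilde N$ to $q/p$ on $N$ through the branched-cover degree $p/\tilde p$ and the fiber degree $e$ — i.e.\ verifying that the naive product map $(t,w)\mapsto(t^e,w^{p/\tilde p})$ actually descends/ascends correctly across the $(0,z)\sim(1,\zeta z)$ identifications on both sides, and that every Seifert cover arises this way rather than from some twisted variant. Making "rotation on both coordinates of $\partial\widetilde N$" precise (choosing the two coordinate circles compatibly on source and target so the matrix is genuinely diagonal, not merely conjugate to a diagonal one) is where the care is needed; once that normalization is fixed, the uniqueness statement is a short $\pi_1$-argument.
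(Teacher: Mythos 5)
Your plan is essentially correct, but it takes a genuinely different route from the paper. The paper never normalizes the covering map itself: it observes that $\pi_1(N)\cong\Z$ forces the cover to be regular with cyclic deck group, takes a generator $g$ (finite order, fiber-preserving), splits its action into an action on the central fiber and an action on a transverse disk, and quotes the classification of finite-order homeomorphisms of $S^1$ and of $D^2$ (Ker\'ek\-j\'art\'o's theorem, the paper's reference [K]) to conjugate both to rotations. Because everything is done at the level of the group action, the gluing bookkeeping you identify as your main obstacle simply never appears. Your approach instead puts the map in a standard product-like form by normalizing the base and fiber directions separately; this is more explicit and has the virtue of making the fiber-degree/base-degree arithmetic visible (which is what actually gets used in Proposition \ref{prop:pullback} and Corollary \ref{cor:fiberwisecov}), but two points need care. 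First, you assert that $f$ induces an orbifold cover $D^2(\tilde p)\to D^2(p)$; in the paper's logical order that statement is a \emph{consequence} of this lemma (via Proposition \ref{prop:pullback}), so in your proof it must be established directly --- e.g.\ restrict to the complement of the central fibers, where $f$ is a fiber-preserving covering of $S^1$-bundles over punctured disks, hence induces an honest covering of punctured disks, which is standard and extends over the cone point. Second, since the lemma only asks for equivalence \emph{as covering spaces} and $\Z$ has a unique subgroup of each index, a degree-$d$ cover of $N$ is unique up to covering isomorphism; so once you exhibit a single degree-$d$ model compatible with the $(0,z)\sim(1,e^{2\pi i q/p}z)$ identifications whose deck group rotates both boundary coordinates (in the spirit of the paper's Figures \ref{fig:fibcov} and \ref{fig:orbpullback}), both the existence and the ``determined by the boundary action'' claims follow at once; this uniqueness argument can absorb most of the descent/ascent verification you were worried about, so the obstacle you flag is fillable and nothing in your outline would fail.
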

\begin{proof}
The map $f$ is a covering map with cyclic deck transformation group $G$ since $N$ is homotopy equivalent to a circle. Pick a generator $g$ of $G$.
The generator $g$ acts on $\widetilde{N}$ taking fibers to fibers and has finite order, so it decomposes into an action $g_1$ on the central fiber, $S^1$, and an action $g_2$ on $D^2$, a disk transverse to each fiber. By classification of 1-manifolds $g_1$ is conjugate to a rotation, and by \cite{K}, $g_2$ is conjugate to a rotation, so up to isomorphism of covering spaces, $g$ rotates $\widetilde{N}$ on both coordinates. 
\end{proof}

We are now ready to prove Proposition \ref{prop:pullback}.
\begin{proof}[Proof of Proposition \ref{prop:pullback}]
First, given a Seifert cover, we describe the induced cover on base orbifolds. Consider a Seifert neighborhood $N_{p'/q'}$ of a fiber $\gamma$ in $M$. Each connected component of $f^{-1}(\gamma)$ is a Seifert neighborhood by construction of the Seifert structure on $\widetilde{M}$. It is also clear that if $\gamma$ is a regular fiber, then so is each connected preimage of $\gamma$ since in a regular Seifert neighborhood every fiber generates $\pi_1$. Now quotienting by the $S^1$ action induces homeomorphisms $D^2 \to D^2$ so that $f$ induces a cover between base orbifolds near smooth points. If $\gamma$ is instead a singular fiber with nearby fibers homotopic to $k$ times $\gamma$, then a connected component $\widetilde{\gamma}$ of $f^{-1}(\gamma)$ will have nearby fibers homotopic to $k/d$ times $\widetilde{\gamma}$, where $d$ is the degree of the cover $\widetilde{\gamma} \to \gamma$, by Lemma \ref{lem:rotation}. Indeed, the fibers near $\gamma$ generate $k\Z \subset \Z = \pi_1(\gamma)$, so the fibers near $\widetilde{\gamma}$ must generate $k/d \Z \subset \Z = \pi_1(\widetilde{\gamma})$. Thus we have an induced map of base orbifolds $D^2(k/d) \to D^2(k)$ by the obvious quotient, so that $f$ induces a cover on base orbifolds near singular fibers as well. 

Now, let $\underline{f}: \widetilde{\Sigma} \to \Sigma$ be the induced cover of base orbifolds, let $\rho: M \to \Sigma$ be the projection, and define 
\[
\overline{M} := \{ (m, \widetilde{s}) | m \in M, \widetilde{s} \in \widetilde{\Sigma} , \rho(m) = \underline{f}(\widetilde{s})\}.
\]
Now it is easy to check that the projection $f_1: \overline{M} \to M$ given by $f_1(m, \widetilde{s}) = m$ is a cover of the same degree as $(\underline{f})$, and that lifting the Seifert fiber structure on $M$ to $\overline{M}$ makes $\overline{M}$ a pullback cover of $M$. Similarly, the map $f_2: \widetilde{M} \to \overline{M}$ given by $f_2(\widetilde{m}) = (f(\widetilde{m}),\rho(\widetilde{m}))$ is a fiberwise cover since by construction it induces the identity map on base orbifolds. 

\end{proof}

It will also be useful to describe explicitly the effect of fiberwise and pullback covers on the standard Seifert fiber form, which is stated in the following two corollaries.

\begin{corollary}
\label{cor:fiberwisecov}
Let $f: \widetilde{M} \to M$ be a fiberwise cover with $\widetilde{M} = \{b;(\alpha_1, \beta_1), \dots, (\alpha_k, \beta_k)\}$. Then $M = \{d_f b;(\alpha_1, d_f \beta_1), \dots, (\alpha_k,d_f \beta_k)\}$, where $d_f$ is the degree of $f$.
\end{corollary}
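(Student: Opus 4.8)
The plan is to exploit the fact---immediate from Proposition \ref{prop:pullback} and Lemma \ref{lem:rotation}---that a fiberwise cover is the identity on the base orbifold and $d_f$-to-one on each fiber, and then to pin down $M$ by computing the two pieces of data that determine an (un-normalized) Seifert invariant tuple: the Euler number, and the residues $\beta_i' \bmod \alpha_i$ at the exceptional fibers.

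Since the preimage of a generic fiber is a single fiber, the orbifold covering of base spaces induced by $f$ (Proposition \ref{prop:pullback}) has degree $1$, hence is an isomorphism; in particular $M$ has base orbifold $S^2(\alpha_1,\dots,\alpha_k)$, say $M = \{b';(\alpha_1,\beta_1'),\dots,(\alpha_k,\beta_k')\}$. By the classification of Seifert invariants (\cite{JN}), such a tuple is determined, up to the moves $\beta_i \mapsto \beta_i+\alpha_i,\ b \mapsto b-1$, by the Euler number $e = -(b' + \sum_i \beta_i'/\alpha_i)$ together with the residues $\beta_i' \bmod \alpha_i$, so it suffices to check that $e(M) = -(d_f b + \sum_i d_f\beta_i/\alpha_i)$ and that $\beta_i' \equiv d_f\beta_i \pmod{\alpha_i}$ for each $i$. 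The Euler number statement is the usual multiplicativity under Seifert covers (\cite{JN}): a fiberwise cover has base-degree $1$ and fiber-degree $d_f$, so $e(\widetilde M) = \tfrac{1}{d_f} e(M)$, i.e.\ $e(M) = d_f\, e(\widetilde M) = -(d_f b + \sum_i d_f\beta_i/\alpha_i)$. For the residues, fix $i$ and let $\widetilde\gamma_i = f^{-1}(\gamma_i')$ be the single fiber of $\widetilde M$ lying over the $i$-th exceptional fiber $\gamma_i'$ of $M$; by the previous paragraph it is exceptional of the same order $\alpha_i$, and $f$ restricts to a connected degree-$d_f$ Seifert cover of fibered solid tori $N_i \to N_i'$ whose core-degree is $d_f$ (as $\pi_1$ of a solid torus is generated by its core). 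By Lemma \ref{lem:rotation} this cover is given by a rotation action on $\partial N_i$, and following the slope of the regular fibers on $\partial N_i$ relative to the meridian---which records $\beta_i \bmod \alpha_i$ upstairs and $\beta_i' \bmod \alpha_i$ downstairs---through that rotation yields $\beta_i' \equiv d_f\beta_i \pmod{\alpha_i}$. Equivalently, one may cut both $\widetilde M$ and $M$ along their fibrations into $P \times S^1$ over a planar base together with the solid tori $N_i$ and one extra solid torus carrying the integer $b$; choosing product structures so that $f$ restricts over $P \times S^1$ to $\mathrm{id}_P \times (z \mapsto z^{d_f})$, the gluings $\widetilde\mu_i = \alpha_i\widetilde q_i + \beta_i\widetilde h$ and $\mu_i' = \alpha_i q_i + \beta_i' h$ together with $f_*\widetilde q_i = q_i$ and $f_*\widetilde h = d_f h$ give $\beta_i' = d_f\beta_i$ outright (and $b' = d_f b$ from the extra torus). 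Combining the two computations, $M = \{d_f b;(\alpha_1,d_f\beta_1),\dots,(\alpha_k,d_f\beta_k)\}$.

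The only real obstacle is bookkeeping rather than substance: one must extract the precise arithmetic effect on the normalized Seifert tuple from the geometric statement of Lemma \ref{lem:rotation}, keeping the orientation, meridian, and section conventions consistent between $\widetilde M$ and $M$ throughout---and, in the alternative argument, justify that the product structures over the planar part can be chosen so that $f$ is the standard map there. The geometric input itself---base orbifold unchanged, fiber multiplied by $d_f$---costs nothing beyond Proposition \ref{prop:pullback} and Lemma \ref{lem:rotation}.
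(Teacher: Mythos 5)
Your proof is correct, and it reaches the paper's conclusion by a slightly different organization. The paper's argument is purely local: it rewrites the obstruction term as an extra $(1,b)$ fiber and applies the local model of a fiberwise cover (Lemma \ref{lem:rotation}, via Proposition \ref{prop:pullback}; see Figure \ref{fig:fibcov}) to a Seifert neighborhood of \emph{every} listed fiber, so each $N_{\beta_i/\alpha_i}$ is seen directly to quotient to $N_{d_f\beta_i/\alpha_i}$ and the $b$-term is handled the same way as the exceptional fibers. Your second, ``cut into $P\times S^1$ plus solid tori'' computation --- tracking $\widetilde\mu_i=\alpha_i\widetilde q_i+\beta_i\widetilde h$ with $f_*\widetilde q_i=q_i$, $f_*\widetilde h=d_f h$ --- is essentially this local argument made explicit in $H_1$ of the boundary tori, and it is the safer of your two write-ups: the phrase ``slope of the regular fibers relative to the meridian'' risks the usual $\beta$ versus $\beta^{-1}\bmod\alpha$ convention trap, whereas the gluing-curve formulation records $\beta_i$ with the correct convention outright. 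Your primary route genuinely differs from the paper in how it pins down the invariants: you invoke multiplicativity of the Euler number under fiber-preserving covers (base-degree $1$, fiber-degree $d_f$ gives $e(M)=d_f\,e(\widetilde M)$, a standard fact in \cite{JN} that the paper never needs) together with the fact that the unnormalized tuple is determined by $e$ and the residues $\beta_i\bmod\alpha_i$; this shortens the global bookkeeping at the cost of importing that external formula, while the paper stays self-contained. One point worth making explicit on either route: a fiberwise cover forces $\gcd(d_f,\alpha_i)=1$ (otherwise the preimage of a regular fiber near the $i$-th exceptional fiber would be disconnected), which is exactly what makes $(\alpha_i,d_f\beta_i)$ a legitimate Seifert pair and makes the image of the upstairs meridian primitive, hence equal to the downstairs meridian rather than a proper multiple of it; this is also the coprimality condition that reappears in Theorem \ref{thm:torusmain}.
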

\begin{proof}
Begin by rewriting $\widetilde{M}$ as $\{0;(\alpha_1, \beta_1), \dots, (\alpha_k, \beta_k), (\alpha_{k+1},b)\}$ with $\alpha_{k+1} = 1$. Then applying Proposition \ref{prop:pullback} to a neighborhood of each listed fiber gives the result. See figure \ref{fig:fibcov}.

\begin{figure}

\scalebox{1.4}{
\begin{tikzpicture}
\tikzset{->-/.style={decoration={
  markings,
  mark=at position .5 with {\arrow{>}}},postaction={decorate}}}
\draw[->-,>=stealth](0,0) to node[below]{\scalebox{.8}{$\delta$}} (3,0);
\draw[->-,>=latex](3,0) to node[right]{\scalebox{.8}{$\sigma$}} (3,3);
\draw[->-,>=stealth](0,3) to (3,3);
\draw[->-,>=latex](0,0) to (0,3);
\draw(0,0)--(1,3);
\draw(1,0)--(2,3);
\draw(2,0)--(3,3);
\draw[dotted](0,2.25)--(.25,3);
\draw[dotted](.25,0)--(.75,1.5);
\draw[dotted](0,1.5)--(.5,3);
\draw[dotted](.5,0)--(.75,.75);
\draw[dotted](0,.75)--(.75,3);
\draw[dashed](.75,0)--(.75,3);
\draw[dashed](1.5,0)--(1.5,3);
\draw[dashed](2.25,0)--(2.25,3);
\end{tikzpicture}}

\caption{The degree 4 fiberwise quotient of $N_{1/3}(\gamma)$ is $N_{4/3}(f(\gamma))$. $\delta$ is a fiber in $N_{1/3}$, $\sigma$ and the dashed lines are sections of $\partial N_{1/3}$ with the same image in $N_{4/3}$, the solid diagonal line is a meridian of $N_{1/3}$, and the dotted line is its image in $N_{4/3}$.}
\label{fig:fibcov}

\end{figure}
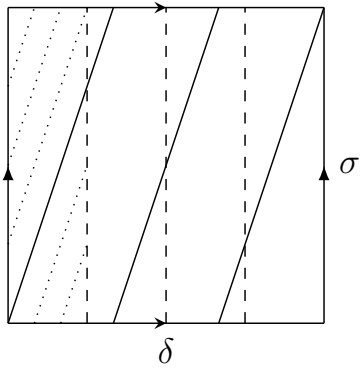

\end{proof}

\begin{corollary}
Let $f: \widetilde{M} \to M$ be a pullback of base orbifolds with 
\[
M = \{b;(\alpha_1, \beta_1), \dots, (\alpha_k, \beta_k)\}.
\]
Then 
\[
\widetilde{M} = \{db;\bigg{(} \frac{\alpha_1}{\lambda_1(\alpha_1)}, \beta_1 \bigg{)}, \dots, \bigg{(}\frac{\alpha_1}{\lambda_{r_1}(\alpha_1)}, \beta_1 \bigg{)}, \dots,  \bigg{(} \frac{\alpha_k}{\lambda_k(\alpha_k)}, \beta_k \bigg{)}, \dots, \bigg{(}\frac{\alpha_k}{\lambda_{r_k}(\alpha_k)}, \beta_k \bigg{)}\}
\]
where $d$ is the degree of $f$, $\lambda(\alpha_i)$ is the partition of $d$ by divisors of $\alpha_i$ coming from the cover of base orbifolds, $\lambda_j(\alpha_i)$ is the $j$th part of the partition $\lambda(\alpha_i)$ (in any order), and $r_i$ is the length of $\lambda(\alpha_i)$.
\end{corollary}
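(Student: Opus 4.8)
The plan is to imitate the proof of Corollary~\ref{cor:fiberwisecov}: rewrite $M$ so that all of its twisting is concentrated at finitely many marked fibres, apply Proposition~\ref{prop:pullback} near each of them, and read off the resulting Seifert invariants. Concretely, I would first rewrite
\[
M = \{0;(\alpha_1,\beta_1),\dots,(\alpha_k,\beta_k),(1,b)\},
\]
so that the complement $M_0$ of neighbourhoods $N(\gamma_1),\dots,N(\gamma_k),N(\gamma_{k+1})$ of the $k+1$ marked fibres (with $(\alpha_{k+1},\beta_{k+1})=(1,b)$) is a trivial circle bundle $\Sigma_0\times S^1$ over a planar surface $\Sigma_0$; fix a section $s\colon\Sigma_0\to M_0$ and let $\sigma_i$ denote its restriction to $\partial N(\gamma_i)$. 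Since $f$ is a pullback cover, Proposition~\ref{prop:pullback} gives an induced orbifold cover $\underline f\colon\widetilde\Sigma\to\Sigma$ with $\deg\underline f=\deg f=d$, and over the smooth planar piece $\Sigma_0$ the restriction of $f$ is, up to covering isomorphism, the product of the surface cover $\widetilde\Sigma_0\to\Sigma_0$ with $\mathrm{id}_{S^1}$. In particular $f$ carries the fibre class $[\widetilde h]$ to $[h]$ and restricts to $s$ on the pulled-back section $\widetilde s$ of $\widetilde M_0$.

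Next I would examine the marked fibres one at a time. Around the cone point of order $\alpha_i$, the classification of orbifold covers of a disk with one cone point (see \cite[Chapter 13]{T}) shows that $\underline f^{-1}$ of a small orbifold disk is a disjoint union of disks $D^2\!\left(\alpha_i/\lambda_j(\alpha_i)\right)$, indexed by the parts of a partition $\lambda(\alpha_i)=\{\lambda_1(\alpha_i),\dots,\lambda_{r_i}(\alpha_i)\}$ of $d$ into divisors of $\alpha_i$ --- this is precisely the partition appearing in the statement, and in particular $\lambda_j(\alpha_i)\mid\alpha_i$. Correspondingly $f^{-1}(N(\gamma_i))$ is a disjoint union of Seifert solid tori $\widetilde V_{ij}$, where $\widetilde V_{ij}\to N(\gamma_i)$ is a connected cover of degree $\lambda_j(\alpha_i)$ with base cone order $\alpha_i/\lambda_j(\alpha_i)$; so the first Seifert coordinate of $\widetilde V_{ij}$ is $\alpha_i/\lambda_j(\alpha_i)$, as desired, and it only remains to compute the second.

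For the second coordinate, write the Seifert invariant of $\widetilde V_{ij}$ as $\bigl(\alpha_i/\lambda_j(\alpha_i),\,\widetilde\beta_{ij}\bigr)$, meaning $\widetilde\mu_{ij}=\tfrac{\alpha_i}{\lambda_j(\alpha_i)}\widetilde\sigma_{ij}+\widetilde\beta_{ij}\widetilde h$ in $H_1(\partial\widetilde V_{ij})$, where $\widetilde\sigma_{ij}$ is the restriction of the section $\widetilde s$ to the relevant boundary torus. Applying $f_*$ and using $f_*\widetilde\sigma_{ij}=\lambda_j(\alpha_i)\sigma_i$ together with $f_*\widetilde h=h$ gives $f_*\widetilde\mu_{ij}=\alpha_i\sigma_i+\widetilde\beta_{ij}h$. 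On the other hand $\widetilde\mu_{ij}$ bounds a disk in $\widetilde V_{ij}$, so $f_*\widetilde\mu_{ij}$ is null-homotopic in $N(\gamma_i)$, hence equals a multiple $n\mu_i$ of the meridian $\mu_i=\alpha_i\sigma_i+\beta_i h$; comparing $\sigma_i$-coefficients forces $n=1$ and therefore $\widetilde\beta_{ij}=\beta_i$. Finally, the degenerate marked fibre $\gamma_{k+1}$ has $\alpha_{k+1}=1$, so $\lambda(1)$ is the all-ones partition and $\gamma_{k+1}$ lifts to $d$ fibres each with invariant $(1,b)$; absorbing these into the Euler number replaces the leading $0$ by $db$ and yields the stated normal form (with the understanding that $\widetilde\Sigma$ again has underlying surface $S^2$, as in our applications). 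The step I expect to require the most care is the identification $f_*\widetilde\mu_{ij}=\mu_i$ with coefficient exactly $1$, together with the verification that a pullback cover introduces no extra fibre twisting over $\Sigma_0$ (so that $f_*$ of the section curves carries no $[h]$-component); once these are pinned down the rest is bookkeeping, and the orientation and sign conventions can be chosen to match the paper's normalization of the Seifert form.
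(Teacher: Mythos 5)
Your proof is correct and takes essentially the same approach as the paper's: the paper likewise reads the new cone orders off the partition data of the induced orbifold cover, notes that the $\beta$-invariants are unchanged by the local pullback picture of Proposition \ref{prop:pullback} (illustrated in Figure \ref{fig:orbpullback}), and rewrites $b$ as a $(1,b)$ fiber which lifts to $d$ copies of $(1,b)$ and reconsolidates as $db$. Your explicit section/meridian computation $f_*\widetilde\mu_{ij}=\mu_i$ with the pulled-back section just fills in the local bookkeeping that the paper delegates to that proposition and its citation.
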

\begin{proof}
The $\alpha$ Seifert invariants are determined by the cone points from the orbifold cover, which are determined from the partitions as stated. See for example \cite[section 1]{EKS}. The $\beta$ Seifert invariants are left unchanged by Proposition \ref{prop:pullback}. Writing the $b$ from $M$ as a $(1,b)$ fiber, this then lifts to $d$-many $(1,b)$ fibers in $\widetilde{M}$ by Proposition \ref{prop:pullback}, which can then be reconsolidated into $db$. See figure \ref{fig:orbpullback}.
\end{proof}

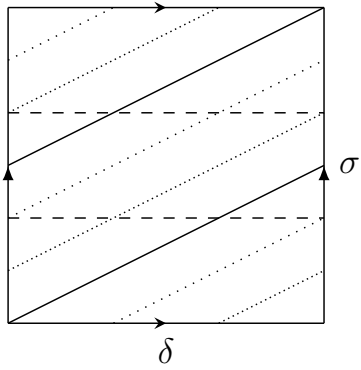
\begin{figure}
\scalebox{1.4}{
\begin{tikzpicture}
\tikzset{->-/.style={decoration={
  markings,
  mark=at position .5 with {\arrow{>}}},postaction={decorate}}}
\draw[->-,>=stealth](0,0) to node[below]{\scalebox{.8}{$\delta$}} (3,0);
\draw[->-,>=latex](3,0) to node[right]{\scalebox{.8}{$\sigma$}} (3,3);
\draw[->-,>=stealth](0,3) to (3,3);
\draw[->-,>=latex](0,0) to (0,3);
\draw(0,0)--(3,3/2);
\draw(0,3/2)--(3,3);
\draw[dotted](1,0)--(3,1);
\draw[dotted](0,1)--(3,2.5);
\draw[dotted](0,2.5)--(1,3);
\draw[densely dotted](2,0)--(3,.5);
\draw[densely dotted](0,.5)--(3,2);
\draw[densely dotted](0,2)--(2,3);
\draw[dashed](0,1)--(3,1);
\draw[dashed](0,2)--(3,2);
\end{tikzpicture}}

\caption{The pullback of $N_{2/3}(\gamma)$ along $f_*:D^2 \to D^2(3)$ is $N_{2/1}(f^{-1}(\gamma))$. 
$\delta$ and the dashed lines are fibers of $N_{2/1}$ with image the same fiber in $N_{2/3}$, and $\sigma$ is a section of $\partial N_{2/1}$. 
The diagonal lines are meridians of $\partial N_{2/1}$ with the same image in $N_{2/3}$. }
\label{fig:orbpullback}

\end{figure}

\section{Orbifold covers\label{sec:orbcov}}

In this section we will classify all orbifold covers of the form $S^2(a,b,c) \to S^2(a',b',c')$. Taking $a' = r$ and $b' = s$, Moser's classification along with Proposition \ref{prop:pullback} will allow us to classify coverings between surgeries on $T(r,s)$.

Since the orbifold Euler characteristic (or just orbifold characteristic, $\chi_{orb}$) is multiplicative under covers, we can further decompose the problem into the cases $\chi_{orb} < 0$, $\chi_{orb} = 0$, and $\chi_{orb} > 0$. These correspond to the three cases in Theorem \ref{thm:orb}.

\subsection{Covers of negative orbifold characteristic}

\begin{proposition}
\label{prop:orbcovlist3}
The only non-trivial covers of orbifolds $S^2(a,b,c) \to S^2(a',b',c')$ with negative orbifold characteristic are
\[
\begin{array}{c|c|c||c|c|c}
(a,b,c) & (a',b',c') & $degree$ &(a,b,c)&(a',b',c')&$degree$ \\
\hline
 (x,x,y)& (2,x,2y)& 2 & (4,4,5)& (2,4,5)& 6 \\
 (2,x,2x)& (2,3,2x)& 3 & (3,3,7)& (2,3,7)& 8 \\
 (x,x,x)& (3,3,x)& 3& (2,7,7)& (2,3,7)& 9 \\
  (3,x,3x)& (2,3,3x)& 4&(3,8,8)& (2,3,8)& 10  \\
 (x,2x,2x)& (2,4,2x)& 4&  (4,8,8)& (2,3,8)& 12 \\
  (x,x,x)& (2,3,2x)& 6& (9,9,9)& (2,3,9)& 12\\
 (x,4x,4x)& (2,3,4x)& 6 & && \\
\end{array}
\]
where $x,y \in \Z$ are large enough that $\chi_{orb} <0$.
\end{proposition}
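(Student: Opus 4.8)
The plan is to combine the orbifold Euler characteristic identity (1) with the partition condition, using the known classification of hyperbolic 2-orbifold triangle groups as the anchor. The key observation is that a degree-$n$ cover $S^2(a,b,c)\to S^2(a',b',c')$ forces $\chi_{orb}(S^2(a,b,c)) = n\,\chi_{orb}(S^2(a',b',c'))$ with both sides negative, so $\chi_{orb}(S^2(a',b',c'))$ lies in the interval $[\chi_{orb}(S^2(a,b,c)), 0)$; since $\chi_{orb}$ of the covering orbifold is itself bounded above by the largest negative value attained by any hyperbolic triangle orbifold (namely $-1/42$, realized by $S^2(2,3,7)$), we get $|\chi_{orb}(S^2(a',b',c'))| \le 1/(42n)$. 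I would first record the finite list of triples $(a',b',c')$ that can be a base: for each candidate $n$, this severely restricts $(a',b',c')$ because $\frac{1}{a'}+\frac{1}{b'}+\frac{1}{c'}$ must be close to $1$ from below.

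Next I would split into the case where at least one of $a',b',c'$ is ``large'' (a free parameter $x$ or $y$) and the case where all three are bounded. In the bounded case the reasoning is finite: enumerate all hyperbolic triples $(a',b',c')$ with $\chi_{orb} \ge -1/(42 n_{\max})$ for the relevant small degrees $n$, and for each, use the partition condition to list the possible preimage data over each of the three cone points — i.e. partitions of $n$ into divisors of $a'$, $b'$, $c'$ respectively such that the multiset of indices $a'/(\text{part})$, after deleting $1$'s, leaves exactly one entry each which must be $a$, $b$, $c$. Then check identity (1) holds. This is exactly how one gets the entries like $(4,4,5)\to(2,4,5)$ of degree $6$, $(3,3,7)\to(2,3,7)$ of degree $8$, and the degree-$12$ sporadic covers; these correspond classically to subgroups of triangle groups of small index, so I can cross-check against the standard literature on index-$\le 12$ subgroups of $(2,3,7)$, $(2,3,8)$, $(2,3,9)$, etc.

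For the infinite families, the structure is that $(a',b',c')$ contains a free parameter, forcing a compatible free parameter in $(a,b,c)$. Here I would argue as follows: if, say, $c' = x$ is large, then among the partitions of $n$ by divisors of $c'$, a part equal to $x$ can only appear if that part is $1$ (since $x$ is larger than $n$), so the preimage of the $c'$-cone point over which the cone order is exactly $c' = x$ must sit over a fiber with trivial local branching — more precisely, the surviving index over the $x$-point is $x$ itself, so $\lambda^{c'}$ must contain $x/x = 1$ accounting appropriately, and matching forces one of $a,b,c$ to be $x$ (or a small multiple/divisor of $x$). Plugging $c' = x$, $c = \alpha x$ into (1) and letting $x\to\infty$ makes the $1/x$ terms vanish, reducing to a finite Diophantine problem among the bounded entries, which pins down $n$, the multiplier $\alpha$, and the remaining small cone orders; this yields the seven parametrized rows. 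I expect the \textbf{main obstacle} to be organizing the partition-condition casework so that it is genuinely exhaustive rather than merely illustrative: there are many partitions to consider for each $n$, and one must be careful that the ``up to reordering'' and ``$x,y$ large enough'' clauses correctly absorb small-degree coincidences and boundary cases (e.g. when a supposedly-free parameter is actually forced to be small, or when two cone points collide). Constructing the covers in each case — the easier half — I would handle by exhibiting explicit branched covers $S^2\to S^2$ with the prescribed ramification (realizable by Riemann existence / explicit monodromy in $S_n$), which I can defer as in the statement's final sentence.
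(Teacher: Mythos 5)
Your overall strategy -- multiplicativity of $\chi_{orb}$ plus the partition condition, a finite search for the sporadic cases, a separate limiting argument for the parametrized families, and explicit monodromy for realizability -- is the same family of argument as the paper's, but your key bounding inequality is stated backwards, and as written the enumeration it drives would fail. From $\chi_{orb}(S^2(a,b,c)) = n\,\chi_{orb}(S^2(a',b',c'))$ and $\chi_{orb}(S^2(a,b,c)) \le -1/42$ you can only conclude $|\chi_{orb}(S^2(a',b',c'))| \ge 1/(42n)$, not $\le 1/(42n)$; the latter is false (the degree $8$ cover $S^2(3,3,7)\to S^2(2,3,7)$ has $|\chi_{orb}(S^2(2,3,7))| = 1/42 > 1/(42\cdot 8)$), and your proposed enumeration of ``all hyperbolic triples with $\chi_{orb} \ge -1/(42\,n_{\max})$'' is empty for $n_{\max}\ge 2$, so every sporadic entry of the table would be missed. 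The bounds you actually need are the ones the paper uses: since $\frac1a+\frac1b+\frac1c>0$, the covering orbifold has $|\chi_{orb}|<1$, hence $|\chi_{orb}(S^2(a',b',c'))| < 1/n$; combined with $|\chi_{orb}|\ge 1/42$ downstairs this gives $n\le 41$, and for $n\ge 7$ it forces $\frac{1}{a'}+\frac{1}{b'}+\frac{1}{c'} > \frac{6}{7}$, which is what makes the list of base triples finite. For $n\le 6$ there is no such finiteness, and that is precisely where the infinite families live.

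Two further points. In the infinite-family step your local analysis is garbled: if $c'=x>n$, every part of the partition of $n$ by divisors of $x$ is a proper divisor of $x$, so every preimage of the $x$-point is a cone point of order $x/a_i \ge x/n$ -- a divisor of $x$, never a multiple and never $1$ -- and you must also allow two independent large parameters (e.g.\ $(x,x,y)\to(2,x,2y)$) and divisor relations such as a cone of order $x/2$ upstairs; the $x\to\infty$ reduction works, but this bookkeeping is the actual content, as you acknowledge. Finally, $\chi_{orb}$-multiplicativity and the partition condition are only necessary conditions, so to obtain the ``only'' in the statement you must either exhibit monodromy for the surviving data and rule out the rest -- the paper does this by encoding each cover as a degree-$n$ cover of $S^1\vee S^1$ glued from covers of disk orbifolds, and brute-forcing degrees at most $6$ together with the finitely many exceptional cases -- or invoke the known classification of inclusions between triangle groups, which you gesture at; either route is legitimate, but it has to be carried out rather than deferred to the construction step.
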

Observe that since Seifert fiber spaces over these orbifolds have a unique base orbifold \cite[Theorem 5.2]{JN}, the only possible torus knots these covers can occur on are $T(2,x), T(4,5), T(3,7)$ and $T(3,8)$.
\begin{proof}
To begin with, multiplicativity of the orbifold characteristic gives
\[
\frac{1}{a} + \frac{1}{b} + \frac{1}{c} - 1 = n\bigg{(} \frac{1}{a'} + \frac{1}{b'}+ \frac{1}{c'} - 1 \bigg{)}
\]
where $n$ is the degree of the cover. By assumption, $\chi_{orb} < 0$, so both $\frac{1}{a} + \frac{1}{b} + \frac{1}{c} - 1$ and  $\frac{1}{a'} + \frac{1}{b'} + \frac{1}{c'} - 1$ are between 0 and $-1$. We first consider the case $n \geq 7$. In this case $\frac{6}{7} < \frac{1}{a'} + \frac{1}{b'} + \frac{1}{c'} < 1$, and so there are finitely many potential triples $(a',b',c')$. For each of these triples, the partition condition on covers gives a finite list of triples $(a,b,c)$ and degrees $n$ for which we might have a cover $S^2(a,b,c) \to S^2(a',b',c')$. 

Now we associate to each degree $n$ orbifold cover $S^2(a,b,c) \to S^2(a',b',c')$ a cover of $S^1 \vee S^1$ also of degree $n$ in the following way. Split the base $S^2$ into three regions with a wedge of two circles such that each region contains one orbifold point. Then the original cover gives a gluing of some covers of the resulting disk orbifolds onto a cover of $S^1 \vee S^1$. This is shown for $S^2(x,x,y) \to S^2(2,x,2y)$ in figure \ref{fig:negorbcov}. Now since the problem is reduced to covers of degree less than 7, plus some finite number of potential exceptions, we can use a brute force search to obtain the stated list of covers.

\end{proof}

\begin{figure}
\centering
\scalebox{.5}{
\begin{tikzpicture}
\node at (1,0) {\Huge{x}};
\node at (3.9,0) {\Huge{y}};
\node at (6.75,0) {\Huge{x}};
\draw[thick] (1,0) circle [radius=0.75];
\draw[thick] (6.75,0) circle [radius=0.75];
\draw[thick] (6,0) arc [radius=3, start angle=45, end angle= 135];
\draw[thick] (6,0) arc [radius=3, start angle=315, end angle= 225];
\end{tikzpicture}
}
\ \ $\xrightarrow{\mbox{2-fold covers}}$ \ \ 
\scalebox{.5}{
\begin{tikzpicture}
\node at (1,0) {\Huge{x}};
\node at (2.5,0) {\Huge{2y}};
\node at (1.75,1) {\Huge{2}};
\draw[thick] (1,0) circle [radius=0.75];
\draw[thick] (2.5,0) circle [radius=0.75];
\end{tikzpicture}
}
\caption{$S^2(x,x,y)$ 2-fold covers $S^2(2,x,2y)$}
 \label{fig:negorbcov}
\end{figure}
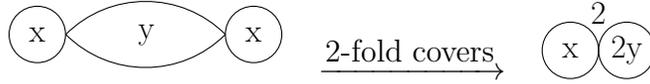

\subsection{Covers of zero orbifold characteristic}

\begin{proposition}
\label{prop:orbcovlist2}
The only covers $S^2(a,b,c) \to S^2(a',b',c')$ with $\chi_{orb} = 0$ are given in Table \ref{table:zoc}.

\end{proposition}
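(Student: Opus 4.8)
The plan is to handle the Euclidean case ($\chi_{orb} = 0$) in much the same spirit as the negative-characteristic case, but exploiting the extra rigidity that $\chi_{orb} = 0$ provides. The Euclidean 2-orbifolds over $S^2$ with exactly three cone points are precisely $S^2(2,3,6)$, $S^2(2,4,4)$, and $S^2(3,3,3)$ — these are the only solutions of $\frac1a + \frac1b + \frac1c = 1$ with $a \le b \le c$. So both source and target of the cover must be drawn from this list of three orbifolds, and a priori there are nine (source, target) pairs to examine. For each such pair, multiplicativity of $\chi_{orb}$ imposes no constraint on the degree $n$ (since $0 = n \cdot 0$ always), so the degree is unconstrained by Euler characteristic and must instead be pinned down by the partition condition together with the geometry of flat orbifolds.

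The key step is to pass to the universal orbifold cover, which in each case is the Euclidean plane $\R^2$, and to realize each orbifold as $\R^2 / \Gamma$ for a crystallographic (wallpaper) group $\Gamma$: $S^2(2,3,6)$ is $\R^2/p6$, $S^2(2,4,4)$ is $\R^2/p4$, and $S^2(3,3,3)$ is $\R^2/p3$. A finite orbifold cover $S^2(a,b,c) \to S^2(a',b',c')$ then corresponds to a finite-index inclusion $\Gamma \hookrightarrow \Gamma'$ of the corresponding wallpaper groups, and the degree is the index. Since the rotation subgroup (translation-plus-rotation part) structure is constrained — $p3$, $p4$, $p6$ have point groups $\Z/3$, $\Z/4$, $\Z/6$ respectively — a subgroup of finite index must have a compatible point group, which forces $p3 \subset p3$, $p3 \subset p6$, $p4 \subset p4$, $p6 \subset p6$ and rules out the mixed cases like $p4 \subset p6$ or $p6 \subset p4$. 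That kills all but four of the nine pairs, matching the four rows of the table. Then, for each surviving pair, one computes the possible indices of such sublattices: for $p6 \subset p6$ and $p3 \subset p3$ the sublattices of the hexagonal (Eisenstein) lattice $\Z[\omega]$ have index $n = x^2 + xy + y^2$ (norms of Eisenstein integers); for $p4 \subset p4$ the sublattices of the square (Gaussian) lattice $\Z[i]$ have index $m = x^2 + y^2$ (norms of Gaussian integers); and for $p3 \subset p6$ one picks up an extra factor of $2$ from the index of the rotation subgroup, giving $2n$. Finally one checks that each listed $n$ (resp.\ $m$, $2n$) is genuinely realized by exhibiting the sublattice, which establishes the "furthermore" clause on constructing the covers.

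Concretely, the steps in order are: (1) enumerate the three Euclidean triangle orbifolds; (2) identify each with $\R^2/\Gamma$ for a wallpaper group and recall its point group; (3) use compatibility of point groups under finite-index inclusion (or, equivalently, the partition condition applied to the cone points — a cone point of order $c$ upstairs maps to one of order $c'$ downstairs with $c \mid c'$, and no new rotation orders can appear) to eliminate the five impossible pairs; (4) for each of the four possible pairs, translate finite-index inclusion of wallpaper groups into a lattice-index computation and read off $n = x^2+xy+y^2$, $m = x^2+y^2$, or $2n$; (5) realize the covers explicitly by writing down the sublattices, which are generated by $x + y\omega$ (resp.\ $x + yi$) acting by multiplication. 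I expect the main obstacle to be step (3)–(4): carefully ruling out the mixed cases and correctly bookkeeping where the factor of $2$ enters in $p3 \subset p6$ (it comes from the index of the orientation-preserving rotation-by-$120^\circ$ subgroup inside the rotation-by-$60^\circ$ group, together with the observation that the $p3$ and $p6$ lattices can be taken to coincide). One must also double-check the partition condition is actually satisfiable for the claimed degrees — e.g.\ for $S^2(3,3,3) \to S^2(3,3,3)$ at degree $n = x^2+xy+y^2$, that the three preimage multisets of cone orders can be assembled from parts dividing $3$ summing to $n$ — but this follows automatically from the lattice picture since the cover is genuinely constructed there, so no separate combinatorial argument is needed beyond noting consistency.
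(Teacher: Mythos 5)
Your proposal is correct and follows essentially the same route as the paper: realizing $S^2(3,3,3)$, $S^2(2,4,4)$, $S^2(2,3,6)$ as quotients of $\R^2$ by the wallpaper groups $p3$, $p4$, $p6$, ruling out the mixed pairs via cone-point (point-group) compatibility, and computing the possible degrees as indices of rotation-invariant sublattices, i.e.\ the Eisenstein and Gaussian norms $x^2+xy+y^2$ and $x^2+y^2$, with the extra factor of $2$ for $p3 \subset p6$. The explicit construction by multiplication by $x+y\omega$ (resp.\ $x+yi$) is likewise the paper's realization of the covers, so no gap.
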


\begin{table}
\[
\begin{array}{c|c|c}
(a,b,c) & (a',b',c') & $degree$ \\
\hline
 (2,3,6)& (2,3,6)& n \\
 (2,4,4)& (2,4,4)& m \\
 (3,3,3)& (3,3,3)& n \\
 (3,3,3)& (2,3,6)& 2n \\
\end{array}
\]

\caption{Covers of zero orbifold charactersitic. $n = x^2 + xy + y^2$ and $m = x^2 + y^2$ for $x,y$ not both $0$.}
\label{table:zoc}
\end{table}

Note that these covers only occur on $T(2,3)$ since the base orbifold of Seifert fiber spaces with these base orbifolds is unique \cite[Theorem 5.2]{JN}.

\begin{proof}
First, recall that the only triples $(a,b,c)$ with $\frac{1}{a} + \frac{1}{b} + \frac{1}{c} = 1$ are $(2,4,4)$, $(3,3,3)$, and $(2,3,6)$. Unlike the other cases, the multiplicativity of the orbifold characteristic tells us nothing about the degree of any potential covers. In particular, each of these orbifolds has many self-covers. The key observation to classify these covers is the connection to lattices. $S^2(2,3,6)$ and $S^2(3,3,3)$ are the fundamental domains of the hexagonal lattice for the p6 and p3 wallpaper groups respectively, and $S^2(2,4,4)$ is the fundamental domain of the square lattice for the p4 wallpaper group. We can then identify covers of these orbifolds with sublattices, keeping track of the symmetries of the sublattice. 

Consider the hexagonal lattice for the $S^2(3,3,3)$ orbifold. That is, a hexagonal lattice with a $\Z/3$ symmetry at each vertex. Any self cover would give a hexagonal sublattice with the same symmetries, and we can identify these sublattices (along with a chosen shortest length vector) with vectors in the original lattice in the following way. Overlay the lattice on $\C$ with 1 corresponding to a shortest length vector. To get a hexagonal sublattice from a vector, multiply each vector in the lattice by the chosen complex number to generate a new lattice, which will induce identical symmetries. See also \cite[section 2.2]{CS}. 

Additionally, neither $S^2(2,3,6)$ or $S^2(2,4,4)$ can cover $S^2(3,3,3)$ since they both have either corner reflectors or a cone point of order 2, neither of which can cover a cone point of order 3. Now the index of the sublattice (and hence the degree of the cover) will be given by the square of the norm of the chosen vector and hence degrees of these self covers are given by outputs of the quadratic form $x^2 + xy + y^2$. See also \cite[Table 4]{CM2}. 

Next consider the hexagonal lattice for $S^2(2,3,6)$. Precisely the same argument will classify self covers. However in this case, for any hexagonal sublattice (where the vertices have a $\Z/6$ rotation action), there is an additional cover corresponding to the same sublattice given by the two fold cover $S^2(3,3,3) \to S^2(2,3,6)$ with partitions $2 = \frac{3}{3} + \frac{3}{3} = \frac{2}{1} = \frac{6}{3}$. That is, corresponding to each hexagonal sublattice, we can forget a 2-fold symmetry and recover $S^2(3,3,3)$. Again, see also \cite[Figure 4]{CM2}. Hence we have $S^2(3,3,3)$ covers $S^2(2,3,6)$ with degree $2(x^2 + xy + y^2)$.

Finally, for $S^2(2,4,4)$ we have a square lattice, and as above, we consider square sublattices with the same symmetries. These have indices $x^2 + y^2$. We also note that these sublattices correspond additionally to covers of $S^2(2,4,4)$ by $S^2(2,2,2,2)$ or by $T^2$ by forgetting additional symmetries.
\end{proof}
\begin{remark}
It is helpful to observe that a priori the degrees of the covers $S^2(3,3,3) \to S^2(2,3,6)$ are of the form $2nn'$ for $n = x^2 + xy +z^2$ and $n' = z^2 + wz + w^2$. However, $nn'$ is again of this form, since compositions of self covers of $S^2(3,3,3)$ must again be self covers of $S^2(3,3,3)$.
\end{remark}
 \begin{remark}
In all of these cases, covers of a specified degree are not necessarily unique. For example $49 = 7^2 + 7 \cdot 0 + 0^2 = 5^2 + 5 \cdot 3 + 3^2$, and hence there are two inequivalent self covers of $S^2(2,3,6)$ of degree 49. 
\end{remark}

\subsection{Covers of positive orbifold characteristic} 

\begin{proposition}
\label{prop:orbcoverlist}
The only non-trivial covers of orbifolds $S^2(a,b,c) \to S^2(a',b',c')$ with positive orbifold characteristic are the following.
\[
\begin{array}{c|c|c|c||c|c|c}
(a,b,c) & (a',b',c') & degree& $conditions$ &(a,b,c)&(a',b',c')&degree\\
\hline
(1, x, y) & (1, nx, ny) & n &&(2,3,3) & (2,3,4) & 2 \\
(1,d,d) & (2,2,x) & 2x/d & d|x &(2,2,4) & (2,3,4) & 3 \\
(1,d,d) & (2,3,3) & 12/d & d \in \{1,2,3\} &(2,2,3) & (2,3,4) & 4\\
(2,2,2) & (2,3,3) &3&&(2,2,2) & (2,3,4) & 6\\
(1,d,d) & (2,3,4) & 24/d & d\in \{1,2,3,4\} &(2,3,3) & (2,3,5) & 5 \\
(1,d,d) & (2,3,5) & 60/d & d  \in \{1,2,3,5\} &(2,2,5) & (2,3,5) & 6 \\
(2,2,d) & (2,2,x) &  x/d  &d|x&(2,2,3) & (2,3,5) & 10\\
 & & &&(2,2,2) & (2,3,5) & 15 \\
\end{array}
\]
Here $n,x,y$ are any positive integers.
Note that since Seifert fiber spaces over these orbifolds (i.e. lens spaces) do not necessarily have unique base orbifolds, these covers may (and in fact do) occur on $T(3,4), T(2,x)$ and $T(3,5)$ in addition to $T(2,3)$.
\end{proposition}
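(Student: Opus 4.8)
The plan is to reduce the classification to a statement about finite subgroups of $SO(3)$. Since $\chi_{orb}\big(S^2(a,b,c)\big)=\tfrac1a+\tfrac1b+\tfrac1c-1$, the hypothesis $\chi_{orb}>0$ together with a short case analysis on $\min(a,b,c)$ shows that, up to reordering, every $2$-orbifold $S^2(a,b,c)$ with positive orbifold characteristic is one of: $S^2(1,x,y)$ for arbitrary $x\le y$ (this includes the smooth sphere $S^2(1,1,1)$ and the bad orbifolds $S^2(1,1,x)$ and $S^2(1,x,y)$ with $x\neq y$), $S^2(2,2,x)$, $S^2(2,3,3)$, $S^2(2,3,4)$, or $S^2(2,3,5)$. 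Both the source and the target of a cover must lie on this list, and I would organize the argument by the type of the target $\Sigma'=S^2(a',b',c')$.

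If $\Sigma'$ is \emph{good}, then $\Sigma'=S^2/\Gamma$ for a finite $\Gamma\le SO(3)$ acting isometrically on the round sphere: $\Gamma=C_d$ when $\Sigma'=S^2(1,d,d)$, $\Gamma=D_x$ (dihedral of order $2x$) when $\Sigma'=S^2(2,2,x)$, and $\Gamma=A_4,\,S_4,\,A_5$ for $S^2(2,3,3),\,S^2(2,3,4),\,S^2(2,3,5)$ respectively. By Thurston's theory of orbifold covering spaces \cite{T} the orbifold universal cover of $\Sigma'$ is $S^2$ with deck group $\Gamma$, so connected covers of $\Sigma'$ are in bijection, up to equivalence, with conjugacy classes of subgroups $\Gamma'\le\Gamma$; the cover is $S^2/\Gamma'$ and its degree is $[\Gamma:\Gamma']$. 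It therefore suffices to enumerate the conjugacy classes of subgroups of $C_d$, $D_x$, $A_4$, $S_4$, $A_5$ and to identify each quotient orbifold $S^2/\Gamma'$. The identification is immediate from the standard facts $S^2/C_n=S^2(n,n)$, $S^2/D_n=S^2(2,2,n)$, $S^2/A_4=S^2(2,3,3)$, $S^2/S_4=S^2(2,3,4)$, $S^2/A_5=S^2(2,3,5)$; alternatively, $\chi_{orb}(S^2/\Gamma')=2/|\Gamma'|$ together with the partition condition already pins down the cone data. Reading off these finite lists produces exactly the table entries whose target is $S^2(2,2,x)$, $S^2(2,3,3)$, $S^2(2,3,4)$ or $S^2(2,3,5)$, together with the subfamily $x=y$ of the first row, and in each case the subgroup $\Gamma'$ realizes the cover, which establishes the realization claim here.

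If $\Sigma'$ is \emph{bad}, i.e.\ $\Sigma'=S^2(1,x,y)$ with at most two cone orders exceeding $1$, then by definition any orbifold cover $\widetilde\Sigma\to\Sigma'$ is a local homeomorphism over the complement of the cone points, which is $\R^2$ or an annulus and hence has abelian fundamental group. Thus the cover is cyclic and, after a change of coordinates, is the map $z\mapsto z^{N}$ of $S^2=\C\cup\{\infty\}$ branched to index $N$ over each cone point; chasing the orbifold covering condition at those points shows such a cover exists exactly when $N$ divides both cone orders, in which case $\widetilde\Sigma=S^2(1,x/N,y/N)$. This is precisely the first row of the table (with $n=N$), and nothing else occurs. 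In particular no bad orbifold covers, or is covered by, any three-cone-point orbifold on the list, so the two cases do not interact.

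The main obstacle is purely the bookkeeping in the good case: one must correctly enumerate all conjugacy classes of subgroups of $S_4$ and $A_5$ — in $S_4$ there are two classes of order-$2$ subgroups and three classes of order-$4$ subgroups, which are easy to conflate — and correctly name each quotient; moreover several inequivalent covers can share the same source and target orbifold (for instance the covers of $S^2(2,3,4)$ coming from the various order-$4$ subgroups of $S_4$ all have base $S^2(2,2,2)$), so one must verify both that nothing outside the table appears and that every listed triple genuinely occurs. Finally, the parenthetical remark about torus knots follows by matching the orbifolds $S^2(2,2,x)$, $S^2(2,3,3)$, $S^2(2,3,4)$, $S^2(2,3,5)$ against Moser's classification (Theorem \ref{thm:moser}) together with Lemma \ref{lem:CRT}: these arise as base orbifolds of surgeries on $T(3,4)$, $T(2,x)$ and $T(3,5)$ as well as on $T(2,3)$, because — unlike in the negative characteristic case — the relevant $3$-manifolds do not have a unique Seifert fibration.
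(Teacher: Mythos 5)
Your proposal is correct in substance but takes a genuinely different route from the paper. The paper first generates a finite candidate list using multiplicativity of $\chi_{orb}$ together with the partition condition, then prunes it and establishes existence cover-by-cover with an explicit combinatorial construction (splitting the base $S^2$ along a wedge of circles into disk orbifolds glued over a cover of $S^1\vee S^1$, as in its negative-characteristic case, illustrated in the figures). You instead exploit the fact that a positive-characteristic target with three genuine cone points is a global quotient $S^2/\Gamma$ for a finite $\Gamma\le SO(3)$ (cyclic, dihedral, $A_4$, $S_4$, $A_5$) whose orbifold universal cover is the simply connected $S^2$, so connected covers correspond exactly to conjugacy classes of subgroups $\Gamma'\le\Gamma$, with quotient $S^2/\Gamma'$ and degree $[\Gamma:\Gamma']$; the two-cone-point (including bad) targets are handled by the elementary observation that the regular part has cyclic fundamental group, forcing $z\mapsto z^N$ with $N$ dividing both cone orders. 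This buys you realization for free (each subgroup genuinely produces a cover, whereas the paper must construct covers by hand) and turns completeness into a finite subgroup enumeration; the paper's method has the advantage of being uniform across all three sign cases of $\chi_{orb}$ and of not invoking the geometrization of elliptic orbifolds as global quotients. Your enumeration reproduces the table: note in particular that the flip subgroup $C_2\le D_x$ yields $S^2(2,2)\to S^2(2,2,x)$ of degree $x$ for all $x$, which appears in the table as the row $(2,2,d)\to(2,2,x)$ with $d=1$, not in the $(1,d,d)$ row. One small slip in your aside: the order-$4$ subgroups of $S_4$ do not all give base $S^2(2,2,2)$ --- the class $C_4$ gives $S^2(4,4)$ (the $(1,4,4)\to(2,3,4)$, degree $6$ entry), while only the two classes of Klein four-groups give $S^2(2,2,2)$; this does not affect the argument, since the identification rules $S^2/C_n=S^2(n,n)$, $S^2/D_n=S^2(2,2,n)$, etc., that you state are correct, and the two non-conjugate Klein four-groups still illustrate your point that inequivalent covers can share the same (source, target, degree) triple.
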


\begin{proof}
Using only multiplicativity of orbifold characteristic and the classification of elliptic 2-orbifolds (see for example \cite[section 13.3]{T}), the potential covers are
\begin{enumerate}
\item $S^2(x,y) \to S^2(nx,ny)$
\item $S^2(d,d) \to S^2(2,2,x)$ with $d|x$,
\item $S^2(d,d) \to S^2(2,3,3)$ where $d|12$,
\item $S^2(d,d) \to S^2(2,3,4)$ where $d|24$,
\item $S^2(d,d) \to S^2(2,3,5)$ where $d|60$,
\item $S^2(2,2,d) \to S^2(2,2,x)$ with $d|x$,
\item $S^2(2,2,2) \to S^2(2,3,3)$,
\item $S^2(2,2,3) \to S^2(2,3,3)$,
\item $S^2(2,2,2) \to S^2(2,3,4)$,
\item $S^2(2,2,3) \to S^2(2,3,4)$,
\item $S^2(2,2,4) \to S^2(2,3,4)$,
\item $S^2(2,2,3) \to S^2(2,3,5)$,
\item $S^2(2,3,3) \to S^2(2,3,4)$, 
\item $S^2(2,2,2) \to S^2(2,3,5)$,
\item $S^2(2,3,3) \to S^2(2,3,5)$,
\item $S^2(2,2,5) \to S^2(2,3,5)$.
\end{enumerate}
Not all of these satisfy the partition condition, so applying that restriction as well gives

\begin{enumerate}
\item $S^2(x,y) \to S^2(nx,ny)$
\item $S^2(d,d) \to S^2(2,2,x)$ with $d|x$, 
\item $S^2(d,d) \to S^2(2,3,3)$,  $d \in \{1,2,3\}$,
\item $S^2(d,d) \to S^2(2,3,4)$, $d \in \{1,2,3,4\}$,
\item $S^2(d,d) \to S^2(2,3,5)$, $d \in \{1,2,3,5\}$,
\item $S^2(2,2,d) \to S^2(2,2,x)$, $d|x$, 
\item $S^2(2,2,d) \to S^2(2,3,4)$, $d \in \{2,3,4\}$
\item $S^2(2,3,3) \to S^2(2,3,4)$,
\item $S^2(2,2,2) \to S^2(2,3,3)$
\item $S^2(2,2,2) \to S^2(2,3,5)$,
\item $S^2(2,2,3) \to S^2(2,3,5)$,
\item $S^2(2,2,5) \to S^2(2,3,5)$,
\item $S^2(2,3,3) \to S^2(2,3,5)$.
\end{enumerate}

In fact these are all orbifold covers, which can be shown in the same way as for the negative orbifold case. This is shown for some cases in figures \ref{fig:orbifoldcovers} and \ref{fig:orbifoldcovers2}. The cases $S^2(d,d) \to S^2(2,3,5)$ for $d \in \{1,2,3,5\}$ and $S^2(d,d) \to S^2(2,3,4)$ for $d \in \{1,2,3\}$ are specifically omitted since they are compositions of other covers on the list. $S^2(x,y) \to S^2(nx,ny)$ corresponds to an $n$-fold cover of a single circle. $S^2(2,2,d) \to S^2(2,2,x)$ is similar to $S^2(x,x,y) \to S^2(2,x,2y)$ from figure \ref{fig:negorbcov}. As a final remark we note that there is not necessarily a unique covering space, or even a unique partition for each entry.  For example with respect to the cover $S^2(2,2) \to S^2(2,2,4)$, we have
\[
4 = \frac{2}{1} + \frac{2}{1} = \frac{2}{1} + \frac{2}{1} = \frac{4}{2} + \frac{4}{2}, \]
but also 
\[
4 = \frac{2}{1} + \frac{2}{2} + \frac{2}{2} = \frac{2}{1} + \frac{2}{1} = \frac{4}{1}. 
\]

\end{proof}

\begin{figure}

\subfloat[]{
\scalebox{.4}{
\begin{tikzpicture}
\node at (1,0) {\Huge{3}};
\node at (3.9,0) {\Huge{2}};
\node at (6.75,0) {\Huge{3}};
\draw[thick] (1,0) circle [radius=0.75];
\draw[thick] (6.75,0) circle [radius=0.75];
\draw[thick] (6,0) arc [radius=3, start angle=45, end angle= 135];
\draw[thick] (6,0) arc [radius=3, start angle=315, end angle= 225];
\end{tikzpicture}
}}\qquad
\subfloat[]{
\scalebox{.4}{
\begin{tikzpicture}
\node at (1,0) {\Huge{3}};
\node at (8.4125,2.929) {\Huge{2}};
\node at (15.825,0) {\Huge{3}};
\draw[thick] (1,0) circle [radius=.75];
\draw[thick] (8.4125,2.929) circle [radius=.75];
\draw[thick] (15.825,0) circle [radius=.75];
\draw[thick] (6,0) - - (10.825,0) - - (8.4125,2.179) - - (6,0);
\draw[thick] (6,0) arc [radius=3, start angle=45, end angle= 135];
\draw[thick] (6,0) arc [radius=3, start angle=315, end angle= 225];
\draw[thick] (10.825,0) arc [radius=3, start angle= 225, end angle= 315];
\draw[thick] (10.825,0) arc [radius=3, start angle= 135, end angle= 45];
\end{tikzpicture}
}}\\
\subfloat[]{
\adjustbox{valign=b}{
\scalebox{.4}{\rotatebox{270}{
\begin{tikzpicture}
\node at (1,0) {\rotatebox{90}{\Huge{3}}};
\node at (10.14,3.05) {\rotatebox{90}{\Huge{2}}};
\node at (10.14,-3.05) {\rotatebox{90}{\Huge{2}}};
\draw[thick] (1,0) circle [radius=.75];
\draw[thick] (10.14,3.05) circle [radius=.75];
\draw[thick] (10.14,-3.05) circle [radius=.75];
\draw[thick] (6,0) - - (9.71,2.4125) - - (9.71,-2.4125) - - (6,0);
\draw[thick] (6,0) arc [radius=3, start angle=45, end angle= 135];
\draw[thick] (6,0) arc [radius=3, start angle=315, end angle= 225];
\end{tikzpicture}
}}}}
\subfloat[]{
\adjustbox{valign=b}{
\scalebox{.4}{\rotatebox{270}{
\begin{tikzpicture}
\node at (1,0) {\rotatebox{90}{\Huge{3}}};
\node at (1.585,4.75) {\rotatebox{90}{\Huge{2}}};
\node at (1.585,-4.75) {\rotatebox{90}{\Huge{2}}};
\draw[thick] (1,0) circle [radius=.75];
\draw[thick] (6,0) - - (9.71,2.4125) - - (9.71,-2.4125) - - (6,0);
\draw[thick] (6,0) arc [radius=3, start angle=45, end angle= 135];
\draw[thick] (6,0) arc [radius=3, start angle=315, end angle= 225];
\draw[thick] (9.71, 2.4125) arc [radius = 6, start angle = -295, end angle = -245];
\draw[thick] (9.71, 2.4125) arc [radius = 6, start angle = 295, end angle = 245];
\draw[thick] (9.71, -2.4125) arc [radius = 6, start angle = -295, end angle = -245];
\draw[thick] (9.71, -2.4125) arc [radius = 6, start angle = 295, end angle = 245];
\draw[thick] (4.67,2.41) - - (-1.5,2.41) - - (1.585,4) - - (4.67,2.41);
\draw[thick] (4.67,-2.41) - - (-1.5,-2.41) - - (1.585,-4) - - (4.67,-2.41);
\draw[thick] (1.585,4.75) circle [radius = .75];
\draw[thick] (1.585,-4.75) circle [radius = .75];
\draw[thick] (-1.5,2.41) arc [radius = 6, start angle = 23.5, end angle = -23.7];
\draw[thick] (-1.5,-2.41) arc [radius = 6, start angle = 203.7, end angle = 156.3];
\end{tikzpicture}
}}}}
\subfloat[]{
\adjustbox{valign=b}{
\scalebox{.4}{\rotatebox{270}{
\begin{tikzpicture}
\node at (-2.6,0) {\rotatebox{90}{\Huge{5}}};
\node at (10.14,3.05) {\rotatebox{90}{\Huge{2}}};
\node at (10.14,-3.05) {\rotatebox{90}{\Huge{2}}};
\draw[thick] (-1.96,2.4125) arc [radius=2.5, start angle=105.5, end angle= 254.5];
\draw[thick] (-1.96,2.4125) arc [radius=2.51, start angle=73.5, end angle= 288];
\draw[thick] (1.75,0) - - (-1.96,2.4125) - - (-1.96,-2.4125) - - (1.75,0);
\draw[thick] (10.14,3.05) circle [radius=.75];
\draw[thick] (10.14,-3.05) circle [radius=.75];
\draw[thick] (6,0) - - (9.71,2.4125) - - (9.71,-2.4125) - - (6,0);
\draw[thick] (6,0) arc [radius=3, start angle=45, end angle= 135];
\draw[thick] (6,0) arc [radius=3, start angle=315, end angle= 225];
\end{tikzpicture}
}}}}
\captionsetup{singlelinecheck=off}
\caption[]{Some orbifold covers from Proposition \ref{prop:orbcoverlist}.  
\begin{enumerate}
\item[(A):] $S^2(2,3,3) \to S^2(2,3,4)$  
\item[(B):] $S^2(2,3,3) \to S^2(2,3,5)$
\item[(C):] $S^2(2,2,3) \to S^2(2,3,4)$
\item[(D):] $S^2(2,2,3) \to S^2(2,3,5)$ 
\item[(E):] $S^2(2,2,5) \to S^2(2,3,5)$ 
\end{enumerate}}
\label{fig:orbifoldcovers}
\end{figure}

\begin{figure}
\subfloat[]{
\scalebox{.3}{
\begin{tikzpicture}
\node at (-3.25,-3) {\Huge{$d$}};
\node at (3,-3) {\Huge{$d$}};
\draw[thick] (-2.25,0) - - (-2.25,-6) - - (-4.25,-10) - - (-4.25,4) - - (-2.25,0);
\draw[thick] (2,0) - - (2,-6) - - (4,-10) - - (4,4) - - (2,0);
\draw[thick] (2,0) arc [radius=3, start angle=45, end angle= 135];
\draw[thick] (2,0) arc [radius=3, start angle=315, end angle= 225];
\draw[thick] (2,-6) arc [radius=3, start angle=45, end angle= 135];
\draw[thick] (2,-6) arc [radius=3, start angle=315, end angle= 225];
\draw[thick] (4,-10) arc [radius=7, start angle=54, end angle= 126];
\draw[thick] (4,-10) arc [radius=7, start angle=306, end angle= 234];
\draw[thick] (4,4) arc [radius=7, start angle=54, end angle= 126];
\draw[thick] (4,4) arc [radius=7, start angle=306, end angle= 234];
\end{tikzpicture}
}}
\subfloat[]{
\scalebox{.3}{
\begin{tikzpicture}
\draw[thick] (6,0) - - (10.825,0) - - (8.4125,2.179) - - (6,0);
\draw[thick] (7,7.5) - - (9.825,7.5) - - (8.4125,6.4) - - (7,7.5);
\draw[thick] (6,0) arc [radius=5, start angle=-15, end angle= 55];
\draw[thick] (6,0) arc [radius=5, start angle=-125, end angle= -195];
\draw[thick] (10.825,0) arc [radius=-5, start angle=15, end angle= -55];
\draw[thick] (10.825,0) arc [radius=-5, start angle=125, end angle= 195];
\draw[thick] (8.4125,2.179) arc [radius=3, start angle= -45, end angle= 45];
\draw[thick] (8.4125,2.179) arc [radius=3, start angle= -135, end angle= -225];
\draw[thick] (7,7.5) arc [radius=3, start angle=70, end angle= 110];
\draw[thick] (7,7.5) arc [radius=3, start angle=290, end angle= 250];
\draw[thick] (9.825,7.5) arc [radius=-3, start angle=70, end angle= 110];
\draw[thick] (9.825,7.5) arc [radius=-3, start angle=290, end angle= 250];
\draw[thick] (5,7.5) - - (4.03,5.4) - - (4.03,11) - - (5,7.5);
\draw[thick] (11.825,7.5) - - (12.795,5.4) - - (12.795,11) - - (11.825,7.5);
\draw[thick] (4.03,11) arc [radius=-12.8, start angle=70, end angle= 110];
\draw[thick] (4.03,11) arc [radius=-12.8, start angle=290, end angle= 250];
\end{tikzpicture}
}}
\subfloat[]{
\adjustbox{valign=b}{
\scalebox{.3}{\rotatebox{270}{
\begin{tikzpicture}
\node at (5.25,0) {\rotatebox{90}{\Huge{2}}};
\node at (-1.7,0) {\rotatebox{90}{\Huge{2}}};
\draw[thick] (-1.7,0) circle [radius=.75];
\draw[thick] (5.25,0) circle [radius=.75];
\draw[thick] (6,0) - - (9.71,2.4125) - - (9.71,-2.4125) - - (6,0);
\draw[thick] (-.95,0) - - (2.76,2.4125) - - (2.76,-2.4125) - - (-.95,0);
\draw[thick] (9.71,2.4125) arc [radius=20, start angle=80, end angle= 100];
\draw[thick] (9.71,2.4125) arc [radius=20, start angle=280, end angle= 260];
\draw[thick] (9.71,-2.4125) arc [radius=20, start angle=80, end angle= 100];
\draw[thick] (9.71,-2.4125) arc [radius=20, start angle=280, end angle= 260];
\end{tikzpicture}
}}}}
\subfloat[]{
\adjustbox{valign=b}{
\scalebox{.3}{\rotatebox{270}{
\begin{tikzpicture}
\node at (-2.6,0) {\rotatebox{90}{\Huge{3}}};
\node at (6.75,0) {\rotatebox{90}{\Huge{3}}};
\draw[thick] (-1.96,2.4125) arc [radius=2.5, start angle=105.5, end angle= 254.5];
\draw[thick] (-1.96,2.4125) arc [radius=2.51, start angle=73.5, end angle= 288];
\draw[thick] (1.75,0) - - (-1.96,2.4125) - - (-1.96,-2.4125) - - (1.75,0);
\draw[thick] (6.75,0) circle [radius=.75];
\draw[thick] (6,0) arc [radius=3, start angle=45, end angle= 135];
\draw[thick] (6,0) arc [radius=3, start angle=315, end angle= 225];
\end{tikzpicture}
}}
}}
\subfloat[]{
\adjustbox{valign=b}{
\scalebox{.3}{\rotatebox{270}{
\begin{tikzpicture}
\node at (-2.6,0) {\rotatebox{90}{\Huge{4}}};
\node at (10.35,0) {\rotatebox{90}{\Huge{4}}};
\draw[thick] (-1.96,2.4125) arc [radius=2.5, start angle=105.5, end angle= 254.5];
\draw[thick] (-1.96,2.4125) arc [radius=2.51, start angle=73.5, end angle= 288];
\draw[thick] (9.71,-2.4125) arc [radius=-2.5, start angle=105.5, end angle= 254.5];
\draw[thick] (9.71,-2.4125) arc [radius=-2.51, start angle=73.5, end angle= 288];
\draw[thick] (1.75,0) - - (-1.96,2.4125) - - (-1.96,-2.4125) - - (1.75,0);
\draw[thick] (6,0) - - (9.71,2.4125) - - (9.71,-2.4125) - - (6,0);
\draw[thick] (6,0) arc [radius=3, start angle=45, end angle= 135];
\draw[thick] (6,0) arc [radius=3, start angle=315, end angle= 225];
\end{tikzpicture}
}}
}}
\captionsetup{singlelinecheck=off}
\caption[]{More orbifold covers from Proposition \ref{prop:orbcoverlist}. Figure (A) is drawn for $x/d = 4$. In general, $d$ would be labeling an $x/d$-gon.  
\begin{enumerate}
\item[(A):] $S^2(1,d,d) \to S^2 (2,2,x)$, $x = 4d$ 
\item[(B):] $S^2(1,d,d) \to S^2 (2,3,3)$, $d = 1$ 
\item[(C):] $S^2(1,2,2) \to S^2(2,3,3)$
\item[(D):] $S^2(1,3,3) \to S^2 (2,3,3)$
\item[(E):] $S^2(1,4,4) \to S^2(2,3,4)$
\end{enumerate}}

\label{fig:orbifoldcovers2}

\end{figure}

\begin{proof}[Proof of Theorem \ref{thm:orb}]
This is now a direct consequence of Propositions \ref{prop:orbcovlist3}, \ref{prop:orbcovlist2}, and \ref{prop:orbcoverlist}.
\end{proof}

\section{Realization of orbifold covers \label{sec:realize}}

Now that we have a complete list of possible base orbifold covers, we aim to understand when these covers are realized by Seifert covers of surgeries on a torus knot. By Proposition \ref{prop:pullback} we can split this problem into two parts. First, given a Seifert fiber space $M = S^3_{p/q}(K)$ with base orbifold $\Sigma$ and $\widetilde{\Sigma} \to \Sigma$ a non-trivial cover of orbifolds, when is the pullback of $M$ along this cover also realized by surgery on $K$? We discuss this in \ref{subsection:pullbacks}. Second, given a fixed base orbifold $\Sigma$, which coverings of Seifert fiber spaces occur over $\Sigma$ as surgery on the same torus knot? We discuss this in \ref{subsection:sameorbifold}. Finally, composing a fiberwise cover and a pullback cover may be realized even if the intermediate cover is not. An example is given in \ref{subsection:combo}.

\subsection{Realization of pullbacks of orbifold covers\label{subsection:pullbacks}}

\begin{lemma} 
\label{lem:sphereobs}
Pullbacks along the following coverings of base 2-orbifolds do not occur for surgeries on any torus knot.
\begin{enumerate}
\item $S^2(d,d) \to S^2(2,s,2)$ where $d|s$ and $s$ is odd,
\item $S^2(d,d) \to S^2(2,3,3)$ where $d \in \{1,2,3\}$,
\item $S^2(d,d) \to S^2(2,3,4)$ where $d \in \{1,2,3,4\}$,
\item $S^2(d,d) \to S^2(2,3,5)$ where $d \in \{1,2,3,5\}$.
\end{enumerate}
\end{lemma}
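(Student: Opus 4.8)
The plan is to argue by contradiction, reducing each of the four items to a short Euler‑number computation. Suppose one of the listed pullbacks is realized: there are a non‑trivial positive torus knot $K=T(r,s)$ and a slope $p/q$ such that $M:=S^3_{p/q}(K)$ has base orbifold $\Sigma$, the target of the given $2$‑orbifold cover, and the pullback $\overline M$ of $M$ along $S^2(d,d)\to\Sigma$ (in the sense of Proposition~\ref{prop:pullback}) is again a surgery $S^3_{p'/q'}(K)$ on $K$. First I would invoke Theorem~\ref{thm:moser}: for $M$ to have base orbifold $\Sigma$ one needs the multiset $\{r,s,\sigma\}$, where $\sigma:=|rsq-p|>1$, to equal the multiset of cone orders of $\Sigma$, with $\gcd(r,s)=1$. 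This leaves exactly one possibility for $(r,s,\sigma)$ in cases (1) and (2), two in case (3), and three in case (4). (In (1) this is where ``$s$ odd'' is used: for $s$ even no coprime pair is available in $\{2,s,2\}$, so the statement is vacuous.) Since $M$ is a rational homology sphere, $e(M)\neq 0$, and Moser gives $|H_1(M)|=|p|=rs\sigma\,|e(M)|$.

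The point of the argument is that the pullback side is rigid enough to force $|H_1(\overline M)|$, independently of the (possibly non‑unique) partition realizing the orbifold cover. Indeed $\overline M$ is Seifert fibered over $S^2(d,d)$, so it has at most two exceptional fibers, each of index $d$; and writing $n$ for the degree of $\overline M\to M$, the fiber of a pullback cover maps with degree one, so $e(\overline M)=n\,e(M)\neq 0$. Hence $\overline M$ is a Seifert fibered space over $S^2$ with at most two exceptional fibers and nonzero Euler number, i.e.\ a lens space. Being a surgery on $K$ with cyclic fundamental group, it must be case~(2) of Theorem~\ref{thm:moser} (the other cases give non‑cyclic $\pi_1$), so $|rsq'-p'|=1$ and therefore $p'=|H_1(\overline M)|\equiv\pm1\pmod{rs}$. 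On the other hand, by the pullback corollary $\overline M=\{\,nb\,;(d,\beta),(d,\beta')\}$ (or $\{\,nb\,\}$ when $d=1$), with $\beta,\beta'$ among the $\beta$‑invariants of $M$; so from the identity $|H_1|=|e|\,\prod_i\alpha_i$ for a Seifert fibered rational homology sphere over $S^2$ we get $|H_1(\overline M)|=d^2\,|e(\overline M)|=\dfrac{d^2 n}{rs\sigma}\,|p|$ for $d>1$ and $|H_1(\overline M)|=\dfrac{n}{rs\sigma}\,|p|$ for $d=1$, where $n$ is computed from Riemann--Hurwitz via $n\,\chi(\Sigma)=\chi(S^2(d,d))=2/d$.

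It then remains to contradict $|H_1(\overline M)|\equiv\pm1\pmod{rs}$ in each of the resulting cases (a dozen or so). Plugging in the value of $n$ and using $|p|=|rsq-\sigma|$, one computes $|H_1(\overline M)|$ explicitly in terms of $p,q,r,s$. In every case except $K=T(3,5)$ with $\sigma=2$ it is visibly not coprime to $rs$ --- because $d$ divides $r$ or $s$, or because the multiplier $d^2 n/(rs\sigma)$ forces an evident common factor $2$, $3$ or $5$ --- which already contradicts $\gcd(p',rs)=1$; in case (1), for instance, for $d>1$ one has $d\mid s\mid rs$, and for $d=1$ the condition $\gcd(p,q)=1$ forces $q$ odd, so $|H_1(\overline M)|=|p|/2=|sq\mp1|$ is even while $rs=2s$. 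In the two remaining cases, $K=T(3,5)$ with $\sigma=2$ and $d\in\{1,2\}$, the number $|H_1(\overline M)|$ is coprime to $15$, but its residue mod $15$ works out to $\pm4$ or $\pm8$, never $\pm1$, so the stronger ``$\equiv\pm1$'' obstruction still applies.

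I expect the main obstacle to be bookkeeping rather than anything conceptual: pinning down the Seifert invariants of torus‑knot surgeries with the correct normalization and orientation conventions (via Lemma~\ref{lem:CRT}), confirming that the degree‑and‑Euler‑number computation really is insensitive to the choice of realizing partition (so that $|H_1(\overline M)|$ is well defined from the combinatorial data), and then running the dozen‑or‑so congruence checks --- in particular the two $T(3,5)$ sub‑cases, where a plain divisibility argument is not enough.
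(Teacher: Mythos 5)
Your proposal is correct and is essentially the paper's own argument: both pin down the admissible torus knots via Moser's classification, observe that the pullback is a lens space so Moser forces its $H_1$-order to be $\equiv\pm1\pmod{rs}$, and contradict this by computing the order of $H_1$ of the pullback (your Euler-number bookkeeping $|H_1(\overline M)|=d^2 n|p|/(rs\sigma)$ is the same computation the paper does directly from the lifted Seifert invariants). If anything you are more explicit than the paper, which only writes out case (1) and calls the rest ``similar'': you correctly isolate the only delicate sub-cases, $T(3,5)$ with $\sigma=2$ and $d\in\{1,2\}$, where divisibility alone fails and the residues $\pm4,\pm8\pmod{15}$ are needed (just note that in the sub-cases $(r,s,\sigma,d)=(2,3,4,1)$ and $(3,4,2,1)$ the common factor comes from $p\equiv\mp\sigma\pmod{rs}$ itself rather than from $d$ or the multiplier, which your explicit plug-in of $p=rsq\mp\sigma$ covers anyway).
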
 
\begin{proof}
We first consider (1). By Moser's classification $S^2(2,s,2)$ can only occur as a base orbifold from surgery on the torus knot $T(2, s)$. We will check that $S^2(d,d)$ never occurs from surgery on this knot. Since Seifert fiber spaces over $S^2(d,d)$ are lens spaces, Moser's classification implies $|2sq-p|=1$ in the cover. In particular $p \equiv \pm1$ mod $2s$. Computing $p$ (the order of $H_1$) from the Seifert invariants however, gives
\[
p = \pm |H_1(\{b;(d,\beta_1),(d,\beta_2) \} )| = d^2b + d\beta_1 + d\beta_2 \equiv 0 \mbox{ mod } d.
\]
Hence $p \not\equiv \pm1$ mod $2s$ unless (potentially) $d = 1$. In this case we would have the space
\[
\{b ; (2, \beta_1),(s,\beta_2),(2,\beta_3) \}
\]
lifting to 
\[
\{2sb; (1,s\beta_1),(1,2\beta_2),(1,s\beta_3)\} = L(s(2b+\beta_1+\beta_3) + 2\beta_2,1).
\]
In particular then, we would have $p = s(2b+\beta_1+\beta_3) + 2\beta_2 \not\equiv \pm1$ mod $2s$ since it is even. Cases (2)-(4) are similar with the same kind of modular arithmetic obstructions.
\end{proof}
\begin{remark}
While pullbacks along these covers do not occur from surgeries on a torus knot, more general covers which induce these covers of base orbifolds may.
\end{remark}
In contrast to the case of Lemma \ref{lem:sphereobs}, in other cases pullbacks along covers of base orbifolds are often realized as surgeries.
\begin{example}
\label{lemma:k}
Given a surgery with one of the base orbifolds listed below, the pullback along the listed cover is often also a surgery on that torus knot.
\begin{enumerate}
\item $S^2(2,s,s) \to S^2(2,s,4)$ on T(2,$s$),
\item $S^2(2,2,3) \to S^2(2,3,4)$ on T(2,$3$),
\item $S^2(2,3,3) \to S^2(2,3,5)$ on T(2,$3$),
\item $S^2(2,2,3) \to S^2(2,3,5)$ on T(2,$3$),
\item $S^2(2,2,5) \to S^2(2,3,5)$ on T(2,$5$).
\end{enumerate}

First consider (1). Then we have as a base space
\[
\{b; (2,1), (s, \beta_2), (4, \beta_3)\},
\]
where $\beta_3 \in \{1,3\}$. This lifts along the degree 2 cover (1) with corresponding partitions $2 = \dfrac{4}{2} = \dfrac{2}{1} = \dfrac{s}{s} + \dfrac{s}{s}$ to give 
\[
\{2b; (1,1),(s,\beta_2),(s,\beta_2),(2,\beta_3)\} = \{2b + 1; (s,\beta_2),(s,\beta_2),(2,\beta_3)\}.
\]
In particular,
\[
p = \pm|H_1(\{2b + 1; (s,\beta_2),(s,\beta_2),(2,\beta_3)\})| = 2s^2(2b+1) + s^2\beta_3+4s\beta_2 \equiv s \mbox{ mod } 2s.
\]
By Moser's classification this base orbifold is realized whenever $|2sq-p| = s$. In fact for any choice of $p \equiv s$ mod $2s$,  there is a choice of $q$ so that $|2sq-p| = s$. Since $p$ determines $b, \beta_2,$ and $\beta_3$ by Lemma \ref{lem:CRT}, this space
\[
\{2b + 1; (s,\beta_2),(s,\beta_2),(2,\beta_3)\}
\]
is realized as surgery on T(2,$s$) as long as $p$ and $q$ are relatively prime. It is easy to check that this often happens. The other cases (2)-(5) are similar.
\end{example}

\subsection{Realization of covers over a fixed orbifold\label{subsection:sameorbifold}} In this case the only possible covers are fiberwise covers, which are determined by Corollary \ref{cor:fiberwisecov}. Note that the $\beta$ invariants for the $r$ and $s$ singular fibers are the same for all surgeries on $T(r,s)$ since these fibers are in the complement of a neighborhood of the knot. Then once you have these two $\beta$ invariants, the third $\beta$ invariant and $b$ are determined by $p$ (see Lemma \ref{lem:CRT}). Hence it is enough to compute $p$ (the order of $H_1$) in the cover, see if surgery with that $p$ can produce the base orbifold in question, and finally check that multiplying the $r$ and $s$ fiber $\beta$ invariants by the degree of the cover leaves them unchanged. In other words, that the degree of the cover is 1 mod $rs$. We provide an example: 

Consider the Seifert fiber space obtained by $-2/3$ surgery on T(2,5). This has base orbifold $S^2(2,5,32)$ with $H_1$ of order $2$. The standard Seifert form is therefore
\[
\{-2;(2,1),(5,3),(32,29)\}.
\]
Taking this as a degree $d$ fiberwise cover gives 
\[
\{-2d;(2,d),(5,3d),(32,29d)\}.
\]
Now we notice that we must have $d \equiv 1$ mod $2$ and $d \equiv 1$ mod $5$, or $d \equiv 1$ mod $15$ for this new manifold to still be surgery on T(2,5). Furthermore, $H_1$ has order $2d$, which will only be $p/q$ surgery on T(2,5) when $|10q-p| = 32$ and $2d = |p|$.  Additionally, the value of $q$ is then determined by $|10q - p| = 32$, and must be relatively prime to $p$. For example $p = -12, q = 2$ is a solution, but not a valid surgery, whereas $p = -22, q = 1$ is. 

\begin{remark}
This example agrees with \cite[Theorem 1.12]{LM}, since although $2/3 <1$, $\lceil 3/2 \rceil > \lfloor 1/22 \rfloor$ so this (regular) cover is consistent with their theorem.
\end{remark}

\subsection{Realization of compositions of covers\label{subsection:combo}}
We describe the general method for checking if one Seifert fiber space $\widetilde{M}$ covers another Seifert fiber space $M$, according to Proposition \ref{prop:pullback}.
\begin{enumerate}
\item First check if there exists a cover between the base orbifolds. Note that $M$ comes with a specified base orbifold, but if $\widetilde{M}$ is a lens space, then we must check all $S^2(d,d)$ which cover the base orbifold of $M$. For small Seifert fiber spaces this is classified in section \ref{sec:orbcov}.

\item Next compute the pullback of the proposed base manifold $M$ along the cover of base orbifolds from (1), as described in section \ref{subsection:pullbacks}

\item Finally check if the proposed cover $\widetilde{M}$ covers this pullback as described in section \ref{subsection:sameorbifold}.
\end{enumerate}

\begin{proof}[Proof of Theorem \ref{thm:torusmain}.]

By Lemma \ref{lem:lensspaces}, we can reduce to the case that at least one of the two surgeries is not a lens space. Theorem \ref{thm:orb} classifies covers of base orbifolds in this case. All such non-trivial covers could only occur on the listed exceptional torus knots, so the remaining covers are fiberwise covers. 

Suppose that $\widetilde{M} \to M$ is a degree $d$ fiberwise cover, and 
\[
\widetilde{M} = \{b;(r, \beta_1), (s, \beta_2), (\alpha,\beta_3)\}.
\]
Then according to \cite{M}, $|H_1(\widetilde{M})| = |r s \alpha b + rs \beta_3 + r \beta_2 \alpha + \beta_1 s \alpha|$, and by Corollary \ref{cor:fiberwisecov},
\[
M = \{db;(r, d\beta_1), (s,d \beta_2), (\alpha,d\beta_3)\}.
\]
This gives that $|H_1(M)| = |rs \alpha d b + rs d\beta_3 +r d\beta_2 \alpha + d\beta_1 s \alpha| = d \cdot |H_1(\widetilde{M})|$. Additionally, we must have that $\beta_1 \equiv d\beta_1$ mod $r$ and $\beta_2 \equiv d \beta_2$ mod $s$, and since gcd$(r,\beta_1) = $ gcd$(s, \beta_2)$ this is equivalent to $d \equiv 1$ mod $rs$.

Conversely, once we fix $r,s,\beta_1,$ and $\beta_2$, $|H_1(M)|$ and the base orbifold determine $M$. Hence as long as $|rsq-p| = |rsq' - p'|$, we can try to take an appropriate degree fiberwise cover of $S^3_{p'/q'}(T(r,s))$ to get $S^3_{p/q}(T(r,s))$. This cover will exist if and only if $p'|p$ and $p/p'$ is relatively prime to the indices of the singular fibers, $r, s,$ and $|rsq'-p'|$.

\end{proof}

We conclude with a pair of examples. 
\begin{example}
\label{ex:cover}
Let $\widetilde{M}$ be $(5,1)$ surgery on $T(2,3)$ and let $M$ be $(45,7)$ surgery on $T(2,3)$. Then by Moser's classification $M$ is given by
\[
\{1;(2,1),(3,1),(3,2)\}
\]
with base orbifold $S^2(2,3,3)$. Since $\widetilde{M}$ is a lens space, we should check pullbacks along $S^2 \to S^2(2,3,3)$, $S^2(2,2) \to S^2(2,3,3)$, and $S^2(3,3) \to S^2(2,3,3)$. We will first pull back along $S^2(3,3) \to S^2(2,3,3)$, which will turn out to be sufficient. The partitions for this degree 4 cover are $4 = \dfrac{2}{1} + \dfrac{2}{1} = \dfrac{3}{1} + \dfrac{3}{3} = \dfrac{3}{1} + \dfrac{3}{3}$ as computed from figure \ref{fig:orbifoldcovers2}. This gives the Seifert fiber space
\[
\{4;(1,1),(1,1),(1,1),(3,1),(1,2),(3,2)\} = \{9;(3,1),(3,2)\} = L(90,-29).
\]
This is 19-fold covered by $L(5,-29) = L(5,1)$, which by Moser's classification is $M$. In fact no cover $\widetilde{M} \to M$ could come from a cover of the complement of $T(2,3)$, since such a cover would necessarily be fiber preserving on the knot complement. Alternatively, since the complement of $T(2,3)$ is also Seifert fibered (with Seifert invariants $(2,1),(3,\pm1)$, depending on orientation), it is also possible to compute all self covers directly. 

\end{example}

\begin{example}
\label{ex:cover2}
Let $\widetilde{M}$ be $105/4$ surgery on $T(4,7)$ and let $M$ be $21/1$ surgery on $T(4,7)$. Then by Theorem \ref{thm:torusmain} $\widetilde{M}$ is a 5-fold cover of $M$, both of which have base orbifold $S^2(4,7,7)$. However, this cover does not restrict to a self cover of the $T(4,7)$ complement, as can be seen from the Seifert invariants.
\[
\widetilde{M} = \{-1;(4,1),(7,5),(7,4)\}, \ \ M = \{-1;(4,1),(7,5),(7,1)\}.
\]
The degree 5 cover between them sends the $(7,5)$ fiber to the $(7,1)$ fiber, whereas in a self cover of the knot complement, the $(7,5)$ fiber must be preserved.
\end{example}

\section{Hyperbolic Knots \label{sec:hyp}}

In this section we will first use a theorem of Futer, Kalfagianni, and Purcell to prove Proposition \ref{prop:hyp}, and then we will use computations of the hyperbolic volume and identification of exceptional surgeries to prove Proposition \ref{prop:hyp2}. 

First we will give a necessary definition. For more background information see \cite{R}. We will use the homological framing for knots in $S^3$, so that the longitude refers to the framing curve having linking number 0 with the knot. Using the standard identification of the boundary of a horoball neighborhood of the cusp with a torus quotient of $\C$, we can define complex lengths for the longitude and meridian. These are only determined up to scaling the horoball, so we use the following.

\begin{definition}
The \emph{cusp shape} $s \in \C$ of a hyperbolic knot is $s = l/m$, where $l$ is the complex length of the longitude, and $m$ is the complex length of the meridian. 
\end{definition}

This is independent of the choice of horoball since the longitude and meridian scale together. 

Our first goal is to prove Proposition \ref{prop:hyp}, here restated as Corollary \ref{cor:bounds}, which is a corollary of the following theorem of Futer, Kalfagianni, and Purcell.
\begin{theorem}\cite[Theorem 1.1]{FKP}
\label{thm:HK}
Let $K$ be a hyperbolic knot in $S^3$, and let $l$ be the length of a surgery slope $p/q$ on the knot complement which is greater than $2\pi$. Then
\[
\vol(K_{p/q}) \geq \bigg{(} 1 - \bigg{(}\dfrac{2\pi}{l_{p/q}}\bigg{)}^2\bigg{)}^{3/2}\cdot \vol(S^3 - K).
\]
\end{theorem}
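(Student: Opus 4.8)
\textbf{Proof proposal for Theorem~\ref{thm:HK} (Futer--Kalfagianni--Purcell).}

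The statement to be proved is the estimate $\vol(K_{p/q}) \geq \left(1 - \left(2\pi/l_{p/q}\right)^2\right)^{3/2} \vol(S^3 - K)$ whenever the surgery slope $p/q$ has length $l_{p/q} > 2\pi$ on the maximal cusp of $S^3 - K$. The plan is to compare the hyperbolic metric on $S^3 - K$ with a (singular or incomplete) metric naturally associated to the Dehn filling, and to use a volume-estimate machine rather than a direct comparison of the two complete structures. Concretely, I would realize the filled manifold $K_{p/q}$ as a hyperbolic manifold by Thurston's hyperbolic Dehn surgery theorem (valid here since by hypothesis the slope is long, hence in particular exceptional slopes are avoided for the relevant range), and then produce a metric on $S^3 - K$ that agrees with the complete hyperbolic metric outside a horoball neighborhood of the cusp and is a suitably ``drilled'' model inside. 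The key external input is a theorem bounding how much volume is lost under Dehn filling in terms of the \emph{normalized length} of the filling slope --- i.e. the area-normalized length $\hat{l} = l/\sqrt{\mathrm{Area}}$ of the curve on the cusp torus, or equivalently the geometric length $l$ once the cusp is expanded to its maximal embedded horoball.

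The main steps, in order: first, set up the cusped manifold $N = S^3 - K$ with its maximal embedded horoball neighborhood $H$ of the cusp, on whose boundary torus the slope $p/q$ has geodesic length $l = l_{p/q}$; second, invoke the negatively-curved Dehn filling estimates (this is the technical core --- constructing on $N_{p/q}$ a Riemannian metric that is hyperbolic away from the filled solid torus, is negatively pinched with pinching constants controlled by $1/l$ inside a tube around the core geodesic, and whose volume differs from $\vol(N)$ by a controlled amount); third, apply a Gromov--Thurston-type straightening / smearing argument comparing the fundamental class of $N_{p/q}$ under this comparison metric, so that $\vol(N_{p/q})$ with its genuine hyperbolic metric is bounded below by $\vol(N)$ times the cube of the pinching factor, which works out to $\left(1 - (2\pi/l)^2\right)^{3/2}$; fourth, check that the hypothesis $l > 2\pi$ is exactly what is needed to keep the pinching factor positive and to keep the constructed tube genuinely negatively curved (the $2\pi$ is the length of the boundary of a Euclidean disk of the right scale, so below it the naive cone-metric model ceases to admit a negatively curved smoothing).

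The hard part will be step two: producing the interpolating metric on the filled manifold with explicitly controlled sectional curvature pinch in terms of $l_{p/q}$, and then ensuring the volume comparison is genuinely an inequality in the stated direction (one must bound the volume of the hyperbolic $N_{p/q}$ \emph{below}, which requires that the comparison metric have volume not too much smaller than $\vol N$, and that straightening does not increase volume). In a self-contained writeup one would either cite the Besson--Courtois--Gallot barycenter method or Thurston's original estimate of the degree-one straightening map; here, since the statement is quoted as \cite[Theorem~1.1]{FKP}, it suffices to note that all of the above is carried out there, and for our purposes we use it only as a black box: the takeaway we need downstream is that a long surgery slope forces $\vol(K_{p/q})$ close to $\vol(S^3-K)$, which combined with the fact that a cover multiplies volume by its degree will obstruct $S^3_{p/q}(K)$ from covering $S^3_{p'/q'}(K)$ for all but boundedly many slopes.
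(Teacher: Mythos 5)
This statement is imported verbatim from \cite[Theorem 1.1]{FKP}; the paper gives no proof of it and uses it purely as a black box in the proof of Corollary \ref{cor:bounds}, and your proposal treats it in essentially the same way. Your sketch of the FKP argument --- a negatively curved filling metric in the style of the $2\pi$-theorem, with curvature pinching and volume controlled by the slope length $l_{p/q}$, followed by a straightening-type volume comparison, the hypothesis $l_{p/q}>2\pi$ being exactly what keeps the pinching factor positive --- is a fair outline of the cited proof, so nothing further is required here.
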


\begin{corollary}
\label{cor:bounds}
Let $K \subset S^3$ be a hyperbolic knot, and $p/q \in \Q$. Then there are at most 32 $p'/q' \in \Q$ such that $K_{p'/q'}$ is non-trivially covered by $K_{p/q}$. 
\end{corollary}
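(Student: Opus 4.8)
My plan is to convert a non-trivial cover $K_{p/q} \to K_{p'/q'}$ into an upper bound on the length of the slope $p'/q'$ on the maximal cusp of $S^3 \setminus K$, and then to count the slopes that are short enough. To begin, note that $p'/q' \neq \infty$: the manifold $K_\infty = S^3$ is simply connected, so it admits no non-trivial cover. So assume $K_{p/q}$ covers $K_{p'/q'}$ with degree $d \geq 2$; write $V := \vol(S^3 \setminus K)$ and let $l_{p'/q'}$ denote the length of $p'/q'$ on the maximal horoball cusp of $S^3 \setminus K$. I will show that $l_{p'/q'} < L_0 := 2\pi / \sqrt{1 - 2^{-2/3}} \approx 10.33$ in every case.

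The first case is that $K_{p'/q'}$ is not hyperbolic; this also subsumes the possibility that $K_{p/q}$ itself is non-hyperbolic, since a manifold covered by a non-hyperbolic one is non-hyperbolic. By the $6$-theorem of Agol and Lackenby together with geometrization, any Dehn filling slope of $S^3 \setminus K$ yielding a non-hyperbolic manifold has length at most $6$ on the maximal cusp, so $l_{p'/q'} \leq 6 < L_0$. The second case is that $K_{p'/q'}$ is hyperbolic; then its cover $K_{p/q}$ is also hyperbolic, volume is multiplicative under covers, and hyperbolic Dehn filling strictly decreases volume (Thurston), so
\[
d \cdot \vol(K_{p'/q'}) = \vol(K_{p/q}) < V .
\]
Hence $\vol(K_{p'/q'}) < V/d \leq V/2$. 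If $l_{p'/q'} \leq 2\pi$ there is nothing to prove; otherwise Theorem \ref{thm:HK} applied to the slope $p'/q'$ gives $\bigl(1 - (2\pi/l_{p'/q'})^2\bigr)^{3/2} V \leq \vol(K_{p'/q'}) < V/2$, so $1 - (2\pi/l_{p'/q'})^2 < 2^{-2/3}$, which rearranges to $l_{p'/q'} < L_0$.

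At this point every admissible $p'/q'$ is a slope of length less than $L_0$ on the maximal cusp of $S^3 \setminus K$, so the corollary follows once one knows there are at most $32$ such slopes. The maximal cusp cross-section is a flat torus $\mathbb{C}/\Lambda$, slopes correspond to primitive vectors of $\Lambda$ up to sign with Euclidean length equal to the slope length, and the area of this torus is bounded below by $2\sqrt{3}$ (the value for the figure-eight knot complement). A lattice-point count, with the extremal case being the hexagonal lattice of covolume $2\sqrt{3}$, then bounds the number of primitive classes of length less than $L_0$ by $32$. I expect this last count to be the only real obstacle: it requires the sharp lower bound on the area of a maximal cusp together with a careful enumeration of short primitive vectors in the worst-case lattice, whereas everything before it is formal -- multiplicativity of volume, the strict volume drop under Dehn filling, and the inequality of Futer--Kalfagianni--Purcell.
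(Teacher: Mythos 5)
The first half of your argument is the paper's: multiplicativity of hyperbolic volume under covers, Thurston's strict decrease of volume under Dehn filling, and Theorem \ref{thm:HK} give exactly the same threshold $l_{p'/q'} < 2\pi/\sqrt{1-(1/2)^{2/3}} \approx 10.33$; your explicit handling of non-hyperbolic $K_{p'/q'}$ via the $6$-theorem, and of the case $l_{p'/q'} \le 2\pi$, is a reasonable tidying of points the paper leaves implicit.

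The gap is the counting step, which is precisely where the constant $32$ comes from and which you admit you have not carried out. The paper does not enumerate short primitive vectors directly: it observes that any two slopes $p'/q'$ and $r/s$ of length less than $10.33$ satisfy $|p's-rq'| \le l_{p'/q'}\, l_{r/s}/\mathrm{area}(T) < (10.33)^2/(2\sqrt{3}) < 31$, using the Cao--Meyerhoff bound $\mathrm{area}(T) \ge 2\sqrt{3}$ \cite{CM}, and then applies Agol's Lemma 8.2 \cite{A}, which says that at most $P(k)+1$ slopes can pairwise intersect at most $k$ times ($P(k)$ the smallest prime exceeding $k$); with $k=30$ this gives $31+1=32$. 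Your proposed substitute, a direct count of primitive classes of length less than $10.33$ ``with the extremal case being the hexagonal lattice of covolume $2\sqrt{3}$,'' is unjustified, and the extremality claim is false as stated: the hexagonal torus of area $2\sqrt{3}$ contains only $30$ primitive classes of length below the threshold, whereas a long thin torus of the same area whose shortest slope has length roughly $0.47$ contains about $31$ classes of the form $(a,\pm 1)$ in addition to the short slope itself, i.e.\ up to $32$. So the hexagonal shape is not the worst case, and any direct count must separately control elongated cusp cross-sections (for instance by invoking a lower bound, such as $1$, on the length of every slope on a maximal cusp, or by falling back on the pairwise intersection-number argument the paper uses). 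As written, the assertion ``at most $32$ short slopes'' is exactly the unproved step, so the proposal does not yet establish the corollary.
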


\begin{remark}
A somewhat similar theorem of Hodgson and Kerckhoff \cite[Theorem 5.9, Corollary 6.7]{HK} gives a similar result, but with a bound of 60 surgeries.
\end{remark}

\begin{proof}[Proof of Corollary \ref{cor:bounds}] We will use Theorem \ref{thm:HK} to bound from above the surgery length of hyperbolic surgeries which could contradict the conjecture. Let $\vol(K_{p/q})$ be the hyperbolic volume of $K_{p/q}$, and let $K_{p/q} \to K_{p'/q'}$ be a degree $n$ cover. Since hyperbolic volume is multiplicative under covers (see for example \cite[Theorem 11.6.3]{R}), 
\[
\vol(K_{p/q}) = n\vol(K_{p'/q'}).
\]
Furthermore a theorem of Thurston \cite[Theorem 6.5.6]{T} gives the inequality $ \vol(S^3 - K) > \vol(K_{p/q}), \vol(K_{p'/q'})$. 
Hence by non-triviality of the cover, 
\begin{equation}
\label{eqn:volineq}
\vol(K_{p'/q'}) < \vol(S^3 - K)/2. 
\end{equation}
Now we can solve for $l_{p'/q'}$ in Theorem \ref{thm:HK} to get
\[
l_{p'/q'} < \dfrac{2\pi}{\sqrt{1-(1/2)^{2/3}}} = 10.328942 \dots
\]
We claim there are at most 32 $p'/q'$ for which this is satisfied. Let $p'/q'$ and $r/s$ be slopes such that the above equation is satisfied, and let area$(T)$ be the area of the cusp torus $T$ for $K$. Then as in the proof of \cite[Theorem 8.1]{A},
\[
|p's - rq'| < \dfrac{(10.33)^2}{\mbox{area}(T)}.
\]
Furthermore, area$(T) \geq 2\sqrt{3}$ (see for example \cite{CM}, note that equality holds if and only if $K$ is the knot $4_1$). Combining these results then gives
\[
|p's - rq'| <30.84.
\]
But by \cite[Lemma 8.2]{A}, there are at most $P(k)+ 1$ slopes with intersection number at most $k$ where $P(k)$ is the smallest prime larger than $k$, so there are at most 32 $p'/q'$ such that  $K_{p'/q'}$ is non-trivially covered by $K_{p/q}$.

\end{proof}

The rest of this section is devoted to checking that none of the 32 potential exceptions for low crossing number knots give rise to counterexamples. We proceed by using the computer program SnapPy \cite{SnapPy} to check the hyperbolic surgeries. First, SnapPy will compute the cusp shape $s \in \C$ of a hyperbolic knot. From this it is easiest to compute the normalized surgery length, so we normalize the cusp to have area 1, and to have positive real meridian. Computing this normalized meridian $m$ and longitude $l$ in terms of the cusp shape $s$ given by SnapPy gives
\[
m = \frac{1}{\sqrt{|\mbox{Im}(s)|}}, \ \ \
l = sm.
\]

The following lemma will then let us bound which $p/q$ may give rise to the 32 potentially exceptional surgeries.

\begin{lemma}
\label{lem:bounds}
Let $k \in \R_{>0}$, $a = \dfrac{|k \cdot \mbox{Re}(l)|}{|m\cdot \mbox{Im}(l)|} + \dfrac{k}{m}$ and $b = \dfrac{k}{|\mbox{Im}(l)|}$, and suppose either $|p|>a$ or $|q|>b$. Then $(p,q)$ surgery on $K$ has surgery curve of normalized length greater than $k$.
\end{lemma}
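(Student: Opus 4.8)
The plan is to compute the normalized length of the $(p,q)$ surgery curve directly and then bound it below. Recall that on the cusp torus, normalized to area $1$ with positive real meridian $m$ and longitude $l = sm$, the surgery slope $p/q$ is represented by the curve $p\mu + q\lambda$, so its complex translation length is $pm + ql$. Hence the normalized length of the $(p,q)$ surgery curve is $|pm + ql|$. I would therefore set out to show that if $|p| > a$ or $|q| > b$, then $|pm + ql| > k$.

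First I would handle the case $|q| > b$. Since $\mathrm{Im}(m) = 0$, we have $\mathrm{Im}(pm + ql) = q\,\mathrm{Im}(l)$, so $|pm+ql| \geq |\mathrm{Im}(pm+ql)| = |q|\cdot|\mathrm{Im}(l)| > b\cdot|\mathrm{Im}(l)| = k$, using the definition $b = k/|\mathrm{Im}(l)|$. This disposes of the $|q|>b$ case immediately.

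Next I would handle the case $|p| > a$. Here the idea is to bound the real part. We have $\mathrm{Re}(pm+ql) = pm + q\,\mathrm{Re}(l)$, so $|pm+ql| \geq |pm + q\,\mathrm{Re}(l)| \geq |p|m - |q|\,|\mathrm{Re}(l)|$ by the reverse triangle inequality. If the right-hand side already exceeds $k$ we are done; otherwise we are in the regime where $|q|$ is not too small relative to $|p|$, and I would instead use the $\mathrm{Im}$ bound together with the relation between $|p|$ and $|q|$ forced by $|p| > a$. Concretely, from $|pm+ql| \geq |p|m - |q||\mathrm{Re}(l)|$, if this is $\leq k$ then $|q| \geq (|p|m - k)/|\mathrm{Re}(l)| > (am - k)/|\mathrm{Re}(l)|$; plugging in $a = \frac{k|\mathrm{Re}(l)|}{m|\mathrm{Im}(l)|} + \frac{k}{m}$ gives $am - k = \frac{k|\mathrm{Re}(l)|}{|\mathrm{Im}(l)|}$, so $|q| > \frac{k}{|\mathrm{Im}(l)|} = b$, and then the imaginary-part bound from the previous paragraph gives $|pm+ql| > k$. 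So in all cases $|pm+ql|>k$, as claimed. (One should note the degenerate possibility $\mathrm{Re}(l)=0$ in the definition of $a$; in that case $a = k/m$ and the bound $|pm+ql|\geq |p|m > am = k$ is immediate, so the statement still holds.)

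The argument is essentially a two-variable estimate on $|pm+ql|$ split according to which of the real or imaginary parts dominates, and the only mild subtlety is organizing the casework so that the two definitions of $a$ and $b$ interlock correctly; I expect the main (very minor) obstacle to be checking the edge case where $\mathrm{Re}(l) = 0$ separately, and making sure the reverse triangle inequality is applied in the right direction. There is no deep input here beyond the identification of the normalized surgery length with $|pm+ql|$, which follows from the setup of the cusp normalization already described.
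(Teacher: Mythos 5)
Your argument is correct and takes essentially the same route as the paper: identify the normalized length with $|pm+ql|$, bound it below by the imaginary part $|q|\,|\mathrm{Im}(l)|$ when $|q|>b$, and by the real part $|p|m-|q|\,|\mathrm{Re}(l)|$ via the reverse triangle inequality when $|p|>a$, with exactly the same constants. The only cosmetic differences are the ordering of the casework (the paper assumes $|q|\le b$ in the second case rather than deriving $|q|>b$ when the real-part bound fails) and your explicit treatment of the $\mathrm{Re}(l)=0$ edge case, which the paper leaves implicit.
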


\begin{proof}
The normalized surgery length is $|pm + ql|$, and since $m$ is real, $|q \cdot \mbox{Im}(l)| \leq |pm + ql|$. In particular, as long as $|q| > \dfrac{k}{|\mbox{Im}(l)|}$ then $|pm + ql| > k$. Now suppose $|q| \leq \dfrac{k}{|\mbox{Im}(l)|}$, but that $|p| > \dfrac{|k \cdot \mbox{Re}(l)|}{|m\cdot \mbox{Im}(l)|} + k$. Then
\[
|pm + ql| \geq |\mbox{Re}(pm+ql)| = |\mbox{Re}(pm) + \mbox{Re}(ql)| = |pm + \mbox{Re}(ql)|.
\]
But $|$Re($ql)|$ is at most $\dfrac{k \cdot |\mbox{Re}(l)|}{|\mbox{Im}(l)|}$, so as long as $|pm|$ is at least $\dfrac{|k \cdot \mbox{Re}(l)|}{| \mbox{Im}(l)|} + k$ then $|pm+ql| > k$, or equivalently as long as $|p| \geq \dfrac{|k \cdot \mbox{Re}(l)|}{|m\cdot \mbox{Im}(l)|} + \dfrac{k}{m}$, then $|pm + ql| > k$, as desired.
\end{proof}

Now we can use Lemma \ref{lem:bounds} and SnapPy to finish the case of hyperbolic surgeries on knots with 8 or fewer crossings.

\begin{proposition}
Let $K$ be a hyperbolic knot with 8 or fewer crossings. Then there is no pair of hyperbolic surgeries $S^3_{p/q}(K)$ and $S^3_{p'/q'}(K)$ with a non-trivial covering between them.
\end{proposition}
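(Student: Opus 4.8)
The plan is to combine the volume bound from Corollary \ref{cor:bounds} with a direct computational check in SnapPy. By Corollary \ref{cor:bounds}, for each hyperbolic knot $K$ with at most $8$ crossings, a degree $n$ cover $S^3_{p/q}(K) \to S^3_{p'/q'}(K)$ forces the surgery length $l_{p'/q'}$ to satisfy $l_{p'/q'} < 10.33$; moreover, by the volume inequality (\ref{eqn:volineq}), $\vol(S^3_{p'/q'}(K)) < \vol(S^3 - K)/2$. These two conditions restrict the possible base slopes $p'/q'$ to a finite, explicitly enumerable list: using Lemma \ref{lem:bounds} with $k = 10.33$ (and the SnapPy-computed cusp shape $s$, hence the normalized $m$ and $l$), any slope with $|p'| > a$ or $|q'| > b$ has normalized length exceeding $10.33$ and so cannot be such a base. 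This bounds $|p'|$ and $|q'|$, leaving finitely many candidate slopes $p'/q'$ for each of the finitely many knots in the census.

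Next I would, for each candidate base slope $p'/q'$ on each knot $K$, enumerate the possible covering slopes $p/q$. Since $\vol(S^3_{p/q}(K)) = n \vol(S^3_{p'/q'}(K))$ with $n \ge 2$, and $\vol(S^3_{p/q}(K)) < \vol(S^3-K)$, we must have $n < \vol(S^3-K)/\vol(S^3_{p'/q'}(K))$, which bounds $n$; and again Lemma \ref{lem:bounds} (now with $k$ chosen from Theorem \ref{thm:HK} so that $\vol(S^3_{p/q}(K)) \geq n \vol(S^3_{p'/q'}(K))$ is violated for long slopes) bounds $|p|$ and $|q|$. One also needs $H_1(S^3_{p/q}(K)) = \Z/p\Z$ to surject onto $H_1(S^3_{p'/q'}(K)) = \Z/p'\Z$ up to the covering, so $p' \mid p$ and in fact $p = np'$ when the cover is cyclic; more robustly, $|p|$ must be a multiple of $|p'|$, which prunes the list substantially. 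For each surviving pair $(p/q, p'/q')$ one then compares the hyperbolic structures: if $\vol(S^3_{p/q}(K)) \neq n\,\vol(S^3_{p'/q'}(K))$ for every admissible $n$, no cover exists; and in the finitely many remaining near-coincidences one checks invariants that are multiplicative or preserved under covers (volume to higher precision, the set of short geodesic complex lengths, the Chern--Simons invariant, homology) to rule out a genuine cover. Exceptional (non-hyperbolic) surgeries on these knots must be treated separately: the twist knots and the relevant pretzel knots among the $8$-crossing knots have their exceptional surgeries classified in the literature, and a non-hyperbolic surgery slope is excluded from the hyperbolic-to-hyperbolic comparison but must be checked not to produce a cover via Theorem \ref{thm:moser}-type reasoning or the earlier torus-knot analysis where applicable.

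The main obstacle I anticipate is twofold. First, there is the bookkeeping and reliability of the SnapPy computation: one must be careful that the candidate lists generated from Lemma \ref{lem:bounds} are genuinely exhaustive (the bound $k=10.33$ coming from Corollary \ref{cor:bounds} must be applied with the correctly normalized cusp, and one must not miss slopes where the normalized length is close to the cutoff), and that the volume comparisons are done with enough numerical precision and with rigorous error control to conclude $\vol(S^3_{p/q}(K)) \neq n\vol(S^3_{p'/q'}(K))$ rather than merely "approximately not equal." Second, and more delicate, is handling the borderline cases where volumes are suspiciously close: here a naive volume check is insufficient and one genuinely needs a secondary invariant (complex length spectrum, Chern--Simons, or a direct SnapPy \texttt{is\_isometric\_to}/covering search) to certify that no cover exists; fortunately, for knots of at most $8$ crossings the number of such borderline cases is small enough to inspect by hand.

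\begin{proof}[Proof of Proposition \ref{prop:hyp2}]
This follows from the preceding proposition together with Theorem \ref{thm:moser} and the known classification of exceptional surgeries on twist knots and pretzel knots; see the discussion above.
\end{proof}
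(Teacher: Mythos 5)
Your first step coincides with the paper's: Corollary \ref{cor:bounds} together with the volume inequality (\ref{eqn:volineq}) restricts possible base slopes $p'/q'$ to those of normalized length below $10.33$, and Lemma \ref{lem:bounds} plus SnapPy makes that finite list explicit. But from there the paper's argument is much lighter than what you propose: it never enumerates candidate covering slopes $p/q$ at all. Since any non-trivial cover has degree at least $2$, a slope $p'/q'$ whose surgery has volume greater than $\vol(S^3-K)/2$ is excluded as a base outright, and the computation shows this happens for every short slope on every hyperbolic knot with at most $8$ crossings except $S^3_{\pm 5/1}(4_1)$ and $S^3_{1/1}(6_1)$. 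For those two, the volume exceeds a third of the knot complement's volume, so only a degree-$2$ cover could occur, and degree-$2$ covers are ruled out intrinsically: a connected double cover corresponds to a surjection $\pi_1 \to \Z/2\Z$, which factors through $H_1$, and $|H_1|$ equals $5$ and $1$ respectively, both odd. No pairwise volume comparison, secondary invariants, or cover search is needed.

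The concrete gap in your plan is the homological pruning step. A covering map does not induce a surjection on $H_1$, so the claim that $\Z/p\Z$ must surject onto $\Z/p'\Z$, hence $p' \mid p$ (and $p = np'$ for cyclic covers), is false; if anything the divisibility tends to run the other way --- compare Lemma \ref{lem:lensspaces} and Theorem \ref{thm:torusmain}, where the cover $S^3_{p/q}$ of $S^3_{p'/q'}$ satisfies $p \mid p'$ with degree $p'/p$, and for a cyclic degree-$n$ cover the correct constraint is that $H_1$ of the \emph{base} surjects onto $\Z/n\Z$. Using your condition as a necessary criterion could silently discard pairs that must be checked, so as written the verification is not logically complete. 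The rest of your strategy (bounding the covering slopes via Theorem \ref{thm:HK} and comparing volumes, with a genuine certificate such as enumerating all degree-$n$ covers of the base in any near-coincidence) is workable in principle, but it is substantially heavier than the paper's route, requires verified numerics at exactly the borderline cases you defer, and those are precisely the cases where the paper's odd-$|H_1|$ parity argument does the real work.
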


\begin{proof}
Using Corollary \ref{cor:bounds}, it would be enough to check that among the shortest 32 surgery lengths all have hyperbolic volume greater than $\vol(S^3 - K)/2$. The volumes are checked with SnapPy using Lemma \ref{lem:bounds} to ensure that we check at least the 32 shortest curves. 

For all of them except $S^3_{\pm 5/1}(4_1)$ and $S^3_{1/1}(6_1)$, the volume of the surgered manifold is more than half the volume of the knot complement. Hence by Equation \ref{eqn:volineq} they cannot be covered by other surgeries on the same knot. For the remaining two hyperbolic surgeries, we have
\[
\vol(S^3_{5/1}(4_1)) = 0.9813688 \dots \mbox{ and } \vol(S^3_{1/1}(6_1)) = 1.3985088 \dots
\]
whereas
\[
\vol(4_1) = 2.0298832 \dots \mbox{ and } \vol(6_1) = 3.1639632 \dots
\]
For these two surgeries the volume is more than a third the volume of the knot complement. Hence it is enough to check that these two manifolds have no two fold covers. But
\[
|H_1(S^3_{\pm 5/1}(4_1))| = 5, \mbox{ and } |H_1(S^3_{1/1}(6_1))| = 1
\]
are both odd, so there are no maps from $H_1 \to \Z/2\Z = S_2$, so there are no two fold covers.
\end{proof}

This leaves the case of exceptional (non-hyperbolic) surgeries on knots with 8 or fewer crossings to which we devote the rest of this section. We first consider alternating knots for which exceptional surgeries are classified in \cite[Corollary 1.2]{IM}. In particular, among alternating hyperbolic knots, only twist knots have more than one exceptional surgery. The Regina software \cite{regina} was used to identify the Seifert fibered and toroidal exceptional surgeries, and the zero-surgeries. The case of the toroidal $\pm 4$-surgery is also worked out in \cite[Section 2]{T2}, and is the union of a twisted interval bundle over the Klein bottle and a torus knot complement. Figure \ref{fig:seiferttab} gives the Seifert fibered surgeries, and figure \ref{fig:toroidaltab} gives the toroidal surgeries. For convenience we use the mirrors of $6_1, 7_2,$ and $8_1$, and since $4_1$ is amphichiral we only list its non-negative surgeries.

\begin{figure}
\begin{tabular}{ c | c | c | c   }
  Twist knot &  $+1$-surgery & $+2$-surgery & $+3$-surgery  \\
  \hline
  &&&\\
  $4_1$ & $\{-1;(2,1),(3,1),(7,1)\}$ & $\{-1;(2,1),(4,1),(5,1)\}$ & $\{-1;(3,1),(3,1),(4,1)\}$\\ &&& \\
  $5_2$ & $\{-1;(2,1),(3,1),(11,2)\}$ & $\{-1;(2,1),(4,1),(7,2)\}$ & $\{-1;(3,1),(3,1),(5,2)\}$\\ &&& \\
  $m6_1$ & $\{-1;(2,1),(3,1),(13,2)\}$ & $\{-1;(2,1),(4,1),(9,2)\}$ & $\{-1;(3,1),(3,1),(7,2)\}$\\ &&& \\
  $m7_2$ & $\{-1;(2,1),(3,1),(17,3)\}$ & $\{-1;(2,1),(4,1),(11,3)\}$ & $\{-1;(3,1),(3,1),(8,3)\}$\\ &&& \\
  $m8_1$ & $\{-1;(2,1),(3,1),(19,3)\}$ & $\{-1;(2,1),(4,1),(13,3)\}$ & $\{-1;(3,1),(3,1),(10,3)\}$\\
\end{tabular}

\caption{The exceptional Seifert fiber surgeries on hyperbolic twist knots with 8 or fewer crossings. The $m$ refers to the mirror of the knot, and for $4_1$ there are the additional $-1,-2,-3$-surgeries since it is amphichiral.}
\label{fig:seiferttab}
\end{figure}

\begin{figure}

\begin{tabular}{ c | c | c  }
  Twist knot &  $0$-surgery & $+4$-surgery   \\
  \hline
  && \\
  $4_1$ & $[A:(1,1)] / \begin{pmatrix} 0 & 1 \\ 1 & -2 \end{pmatrix}$ & $(S^3 - T(2,3)) \cup K_I$ \\&& \\
  $5_2$ & $[A:(2,1)]/ \begin{pmatrix} 0 & 1 \\ 1 & -1 \end{pmatrix}$ & $(S^3 - T(2,3)) \cup K_I$ \\ && \\
  $m6_1$ & $[A:(2,1)]/ \begin{pmatrix} 0 & 1 \\ 1 & -2 \end{pmatrix}$ & $(S^3 - T(2,5)) \cup K_I$ \\ &&\\ 
  $m7_2$ & $[A:(3,2)]/ \begin{pmatrix} 0 & 1 \\ 1 & -1 \end{pmatrix}$ & $(S^3 - T(2,5)) \cup K_I$ \\ &&\\
  $m8_1$ & $[A:(3,1)]/ \begin{pmatrix} 0 & 1 \\ 1 & -2 \end{pmatrix}$ & $(S^3 - T(2,7)) \cup K_I$ \\
\end{tabular}

\caption{The exceptional toroidal surgeries on hyperbolic twist knots with 8 or fewer crossings. $K_I$ refers to the nontrivial interval bundle over the Klein bottle coming from the mapping cylinder of the orientation cover. $[A:(x,y)]$ refers to the Seifert fiber space with base surface the annulus and a single exceptional fiber $(x,y)$. Quotienting by a matrix refers to gluing the two torus boundary components together via that element of the mapping class group. The framing is given by choosing the fiber and a section. As in Figure \ref{fig:seiferttab} the $m$ refers to the mirror of the knot, and there is additionally the $-4$-surgery on $4_1$.}
\label{fig:toroidaltab}

\end{figure}

Covers of Seifert fiber spaces are Seifert fiber spaces, and the multiplicativity of orbifold Euler characteristic gives an obstruction to covers between the surgeries in figure \ref{fig:seiferttab}. We now consider the toroidal surgeries in figure \ref{fig:toroidaltab}. 

\begin{lemma}
\label{lem:transfer}
Let $M$ and $N$ be 3-manifolds. If rank $H_1(M;\R) >$ rank $H_1(N;\R)$ then $N$ cannot cover $M$.
\end{lemma}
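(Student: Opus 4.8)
The plan is to use the transfer homomorphism (or equivalently, the behavior of rational homology under finite covers). Suppose for contradiction that $N$ is covered by $M$ via a degree $d$ map $f \colon M \to N$. The key fact is that for a finite cover $f \colon M \to N$ of degree $d$, the induced map $f_* \colon H_1(M;\R) \to H_1(N;\R)$ is surjective. This follows from the existence of a transfer map $\tau \colon H_1(N;\R) \to H_1(M;\R)$ with $f_* \circ \tau = d \cdot \mathrm{id}$; since we are working with real (or rational) coefficients, multiplication by $d$ is invertible, so $f_*$ is onto.

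Given surjectivity of $f_* \colon H_1(M;\R) \to H_1(N;\R)$, linear algebra immediately gives $\mathrm{rank}\, H_1(M;\R) \geq \mathrm{rank}\, H_1(N;\R)$. This is the contrapositive of the statement: if $\mathrm{rank}\, H_1(M;\R) > \mathrm{rank}\, H_1(N;\R)$ — wait, I need to be careful about the direction. Here $N$ covers $M$ means there is a covering map $M' \to M$... no: "$N$ cannot cover $M$" means we rule out a covering map $N \to M$. Actually re-reading: the conclusion is that $N$ cannot cover $M$, i.e., there is no covering map from $N$ onto $M$ (with $N$ the total space). So I would suppose $g \colon N \to M$ is a finite covering of some degree $d$, apply the transfer argument to get $g_* \colon H_1(N;\R) \twoheadrightarrow H_1(M;\R)$, and conclude $\mathrm{rank}\, H_1(N;\R) \geq \mathrm{rank}\, H_1(M;\R)$, contradicting the hypothesis $\mathrm{rank}\, H_1(M;\R) > \mathrm{rank}\, H_1(N;\R)$.

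Concretely I would argue as follows. Assume $g \colon N \to M$ is a covering map of finite degree $d$ (all covers here are finite since the manifolds are compact). The composition of the transfer homomorphism with $g_*$ on $H_1(-;\Q)$ is multiplication by $d$, which is an isomorphism of $\Q$-vector spaces, so $g_* \colon H_1(N;\Q) \to H_1(M;\Q)$ is surjective. Hence $\dim_\Q H_1(N;\Q) \geq \dim_\Q H_1(M;\Q)$, i.e. $\mathrm{rank}\, H_1(N;\R) \geq \mathrm{rank}\, H_1(M;\R)$, contradicting the hypothesis. Therefore no such covering exists.

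There is essentially no serious obstacle here; the only thing to be careful about is getting the direction of the covering right and invoking the transfer map with field coefficients so that division by $d$ is legitimate. One could alternatively phrase this via the fact that a degree-$d$ cover induces a finite-index inclusion $\pi_1(N) \hookrightarrow \pi_1(M)$, and a finite-index subgroup of a group has first Betti number at least that of the ambient group — but the transfer argument is the cleanest. I would keep the proof to two or three sentences.
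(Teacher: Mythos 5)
Your proposal is correct and uses the same argument as the paper: compose the transfer homomorphism with the map induced by the covering to get multiplication by the degree on $H_1(\,\cdot\,;\R)$, which forces $\mathrm{rank}\, H_1(N;\R) \geq \mathrm{rank}\, H_1(M;\R)$ whenever $N$ covers $M$. The only cosmetic difference is that you phrase the conclusion via surjectivity of the induced map while the paper phrases it via injectivity of the transfer; both follow at once from the same composition being an isomorphism.
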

\begin{proof}
Suppose $f:N \to M$ is a covering map. Then the transfer homomorphism composed with the induced map $f_*$ on homology induces multiplication by deg$(f)$ on $H_1(M;\R)$, which is an isomorphism. This implies that the transfer homomorphism is injective and hence that rank $H_1(M;\R) \leq $ rank $H_1(N;\R)$.
\end{proof}

By Lemma \ref{lem:transfer}, 0-surgery on a knot can never be covered by any non-zero surgery on a knot. It remains to check that $4$-surgery is not covered by $0$-surgery for twist knots. To do so, we consider the geometric decomposition surface of \cite[Section 1.9]{AFW}. This is similar to the geometric torus decomposition, except that it additionally allows Klein bottles coming from $K_I$ components, as we have in Figure \ref{fig:toroidaltab}. Observe that for $4$-surgery on a twist knot we have a single Klein bottle as the geometric decomposition surface, since torus knot complements admit an $\widetilde{SL_2(\R)}$ geometry (see for example \cite{VVT}). Now by \cite[Theorem 1.9.3]{AFW} this geometric decomposition surface lifts to the geometric decomposition surface of any finite cover. In particular, if $0$-surgery on a twist knot covered $4$-surgery on a twist knot, then it would have a (non-empty) geometric decomposition surface cutting it into pieces which each cover the respective torus knot complement.

However, the geometric decomposition surface for the twist knot $0$-surgeries has at most one torus, since the obvious torus cuts it into a single Seifert fiber space $[A:(x,y)]$. However, by multiplicativity of the orbifold characteristic, $[A:(1,1)]$ does not cover $D^2(2,3) = S^3 - T(2,3)$ (and similarly for the other twist knots we consider). Hence $0$-surgery cannot cover $4$-surgery on these twist knots. In particular,

\begin{proposition}
Conjecture \ref{conj:hyp} is true for alternating knots with 8 or fewer crossings.
\end{proposition}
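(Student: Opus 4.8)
The plan is to combine the general upper-volume bound of Corollary \ref{cor:bounds} with a case-by-case analysis of the hyperbolic knots in question, together with an inspection of the exceptional (non-hyperbolic) surgeries. First I would enumerate the alternating hyperbolic knots with at most $8$ crossings: these are $4_1$, $5_2$, $6_1$, $6_2$, $6_3$, $7_2$, $7_3$, $7_4$, $7_5$, $7_6$, $7_7$, $8_1$, \dots, $8_{18}$ (all prime $8$-crossing knots except the torus knot $8_{19}$ and the non-alternating ones $8_{19}, 8_{20}, 8_{21}$). For the case where both surgeries $S^3_{p/q}(K)$ and $S^3_{p'/q'}(K)$ are hyperbolic, I would invoke the preceding proposition in this excerpt (the one proved via SnapPy and Lemma \ref{lem:bounds}), which already handles \emph{all} hyperbolic knots with $8$ or fewer crossings, not just alternating ones. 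So the only work remaining in the alternating case is to rule out covers involving at least one \emph{exceptional} surgery.

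Next I would use the classification of exceptional surgeries on alternating knots from \cite[Corollary 1.2]{IM}: among alternating hyperbolic knots, only the twist knots have more than one exceptional surgery, and a knot with at most one exceptional surgery can never furnish a counterexample (a cover $S^3_{p/q}(K) \to S^3_{p'/q'}(K)$ with $p/q \neq p'/q'$ requires two distinct slopes, and a non-trivial cover forces both the base and total space to be non-hyperbolic or forces the volume inequality — in any case if at most one slope is exceptional, the preceding proposition already covers the situation, since if only one of the two manifolds is exceptional it is Seifert fibered or toroidal while the other is hyperbolic, and a hyperbolic manifold cannot cover a non-hyperbolic one and vice versa, as geometric type is preserved under finite covers). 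Therefore it suffices to restrict attention to the five twist knots $4_1, 5_2, m6_1, m7_2, m8_1$ and their exceptional surgeries, which are tabulated in Figures \ref{fig:seiferttab} and \ref{fig:toroidaltab}.

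Then I would dispatch the twist-knot cases exactly as the text preceding the statement does: (i) covers among the Seifert fibered surgeries of Figure \ref{fig:seiferttab} are obstructed by multiplicativity of the orbifold Euler characteristic, since the base orbifolds listed there are pairwise incompatible under covers (e.g.\ $S^2(2,3,n)$ versus $S^2(3,3,n)$ versus $S^2(2,4,n)$) — here one checks the orbifold Euler characteristics are not integer multiples of one another, or that the cone-point/partition conditions fail; (ii) by Lemma \ref{lem:transfer}, the $0$-surgery, having $b_1 = 1$, cannot be covered by any non-zero surgery, and cannot cover any surgery with $b_1 = 0$; (iii) the only remaining possibility is that the $0$-surgery covers the toroidal $+4$-surgery (both have $b_1=1$), and this is ruled out by the geometric-decomposition-surface argument: by \cite[Theorem 1.9.3]{AFW} the decomposition surface lifts to any finite cover, the $+4$-surgery has a single Klein bottle (from the $K_I$ piece, the torus-knot complement piece carrying $\widetilde{SL_2(\mathbb{R})}$ geometry), while the $0$-surgery's decomposition surface is at most one torus cutting out a single $[A:(x,y)]$ Seifert piece, and $[A:(x,y)]$ does not cover $D^2(2,3)$ (nor $D^2(2,5)$, $D^2(2,7)$) again by orbifold Euler characteristic. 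Since $4_1$ is amphichiral the extra $-1,-2,-3,-4$ surgeries are mirror images of ones already treated. The main obstacle is really bookkeeping: making sure the list of alternating $\le 8$-crossing knots is complete, that \cite[Corollary 1.2]{IM} indeed leaves only twist knots with multiple exceptional surgeries, and that every entry of Figures \ref{fig:seiferttab}–\ref{fig:toroidaltab} is checked against every other entry for the orbifold-characteristic and homological obstructions; the underlying geometric input is entirely supplied by the cited results.
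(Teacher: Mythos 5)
Your overall route is the same as the paper's: dispose of hyperbolic--hyperbolic pairs by the volume/SnapPy argument behind Corollary \ref{cor:bounds}, reduce to twist knots via \cite[Corollary 1.2]{IM}, obstruct covers among the Seifert fibered exceptional surgeries of Figure \ref{fig:seiferttab} by multiplicativity of the orbifold Euler characteristic, and rule out the toroidal case by the geometric decomposition surface argument of \cite[Theorem 1.9.3]{AFW}. Your explicit remark that geometric type passes to and from finite covers, which handles pairs where exactly one surgery is exceptional, is a point the paper leaves implicit, and is fine.

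There is, however, a concrete error in your case analysis. Lemma \ref{lem:transfer} gives an injection $H_1(\mbox{base};\R) \to H_1(\mbox{cover};\R)$, so it shows the $0$-surgery (with $b_1 = 1$) cannot be \emph{covered} by any non-zero surgery; it does \emph{not} show the $0$-surgery ``cannot cover any surgery with $b_1 = 0$'' --- a manifold with positive first Betti number can cover a rational homology sphere (for instance $T^3$ covers the Hantzsche--Wendt manifold). Also, your parenthetical that the $+4$-surgery has $b_1 = 1$ is false: $H_1(S^3_{4}(K)) \cong \Z/4\Z$, so $b_1 = 0$. Consequently your step (ii) does not actually exclude the $0$-surgery (or the $4$-surgery) covering one of the Seifert fibered surgeries, nor a Seifert fibered surgery covering the $4$-surgery, and your claim that ``the only remaining possibility'' is $0 \to 4$ is unjustified as written. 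The paper closes these cases differently: covers of Seifert fibered spaces are Seifert fibered, and the $0$- and $4$-surgeries of Figure \ref{fig:toroidaltab} are not Seifert fibered (they have non-empty geometric decomposition surface), so they cannot cover the manifolds in Figure \ref{fig:seiferttab}; and since the decomposition surface lifts to any finite cover, no Seifert fibered surgery can cover the $4$-surgery either --- the same lifting argument you correctly apply in (iii) to exclude $0 \to 4$. With the transfer lemma used only in its valid direction and these cases patched as above, your outline matches the paper's proof.
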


The final case is that of the non-alternating hyperbolic knots of 8 or fewer crossings, the knots $8_{20}$ and $8_{21}$. 

SnapPy \cite{SnapPy} verifies that all surgeries on the knot $8_{21}$, and all surgeries except the 0, 1, and 2 surgery on the knot $8_{20}$ are hyperbolic. In fact, the volumes of surgeries on the knot $8_{21}$ and of hyperbolic surgeries on the knot $8_{20}$ are all large enough to obstruct any non-trivial covers, as in Corollary \ref{cor:bounds}. 

The knot $8_{20}$ is also the pretzel knot $P(-3,3,2)$, and \cite[Theorem 1.1]{M4}, or Wu \cite[Theorem 1.1]{Wu} can be checked to see that the only toroidal surgery on $8_{20}$ is the 0-surgery. Hence the Seifert fiber space surgeries on $P(-3,3,2)$ are the +1 and +2 surgeries, which are identified by Regina as 
\[
\{-1;(3,1),(4,1),(5,2)\} \mbox{ and } \{-1;(2,1),(4,1),(9,2)\}
\]
respectively. These base orbifolds have orbifold characteristic $-13/60$ and $-5/36$ respectively, so there is no cover between these spaces. This concludes the case of hyperbolic knots with 8 or fewer crossings.

\begin{proposition}
Let $K$ be a hyperbolic knot with 8 or fewer crossings. Then $S^3_{p/q}(K)$ does not non-trivially cover $S^3_{p'/q'}(K)$. In particular, Conjecture \ref{conj:hyp} is true for these knots.
\end{proposition}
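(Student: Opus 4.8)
The plan is to assemble the statement from the three preceding propositions of this section together with the structural results established earlier. The statement splits naturally into three regimes for the candidate base $K_{p'/q'}$: when the surgery is hyperbolic, when it is Seifert fibered, and when it is toroidal; and correspondingly for the covering manifold $K_{p/q}$. First I would recall that by Corollary \ref{cor:bounds} only finitely many $p'/q'$ (at most $32$) can be non-trivially covered by any fixed $K_{p/q}$, and that SnapPy computations (organized via Lemma \ref{lem:bounds} to guarantee the $32$ shortest curves are examined) show that for every hyperbolic knot $K$ with $8$ or fewer crossings, all but $S^3_{\pm 5/1}(4_1)$ and $S^3_{1/1}(6_1)$ have surgered volume exceeding half the volume of the complement, hence cannot be covered by Equation \ref{eqn:volineq}; and for the two exceptions the homological parity argument rules out the only possible (degree $2$) cover. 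This handles the case where the base is a hyperbolic surgery, and simultaneously the case where the \emph{covering} manifold is hyperbolic, since a cover of a non-hyperbolic manifold by a hyperbolic one is impossible by multiplicativity of geometric structures.

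Next I would dispose of the non-hyperbolic surgeries. For alternating knots of $8$ or fewer crossings, \cite[Corollary 1.2]{IM} limits the knots with more than one exceptional surgery to the twist knots $4_1, 5_2, m6_1, m7_2, m8_1$, whose Seifert fibered exceptional surgeries are tabulated in Figure \ref{fig:seiferttab} and whose toroidal ones in Figure \ref{fig:toroidaltab}. Among the Seifert fibered surgeries, covers must preserve the multiplicativity of the orbifold Euler characteristic of the base; a direct check of the listed base orbifolds shows no such relation holds between distinct entries. For the toroidal surgeries: $0$-surgery has $b_1 = 1$, so by Lemma \ref{lem:transfer} it can never be covered by a non-zero surgery, and a non-zero surgery is never toroidal in this list except the $\pm 4$-surgeries; so the only remaining possibility is that $0$-surgery covers a $\pm 4$-surgery. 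Here I would invoke the geometric decomposition surface (\cite[Section 1.9]{AFW}) and its behavior under finite covers (\cite[Theorem 1.9.3]{AFW}): the $\pm 4$-surgery has decomposition surface a single Klein bottle with the adjacent $\widetilde{SL_2}$-piece a torus knot complement, whereas the $0$-surgery's decomposition has at most one torus bounding a single $[A:(x,y)]$ piece, and $[A:(1,1)]$ (and the analogous pieces for the other twist knots) does not cover $D^2(2,3) = S^3 - T(2,3)$ by orbifold Euler characteristic — ruling this out.

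Finally, for the two non-alternating knots $8_{20}$ and $8_{21}$, SnapPy shows all surgeries on $8_{21}$ and all but the $0,1,2$ surgeries on $8_{20}$ are hyperbolic with volume large enough to obstruct non-trivial covers as in Corollary \ref{cor:bounds}; and since $8_{20} = P(-3,3,2)$, the classification of toroidal surgeries on this pretzel knot (\cite[Theorem 1.1]{M4}, or Wu \cite[Theorem 1.1]{Wu}) shows the only toroidal surgery is the $0$-surgery, so the remaining Seifert fibered surgeries are the $+1$ and $+2$, with base orbifolds $\{-1;(3,1),(4,1),(5,2)\}$ and $\{-1;(2,1),(4,1),(9,2)\}$ of orbifold characteristic $-13/60$ and $-5/36$, again admitting no cover between them.

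I expect the main obstacle to be the toroidal case for the twist knots, namely showing $0$-surgery does not cover $\pm 4$-surgery: unlike the Seifert fibered and hyperbolic cases, there is no single numerical invariant that settles it directly, and one genuinely needs the functoriality of the geometric decomposition under covers together with the identification of the JSJ pieces on both sides. The homological ($b_1$) obstruction of Lemma \ref{lem:transfer} kills every direction \emph{except} $0 \to (\pm 4)$, precisely because $b_1$ of the $\pm 4$-surgery is $0$, so this last subcase is the one requiring the geometric input rather than elementary algebraic topology.
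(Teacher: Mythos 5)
Your proposal follows essentially the same route as the paper: the volume bound of Corollary \ref{cor:bounds} with SnapPy computations (via Lemma \ref{lem:bounds}) and the odd-$|H_1|$ argument for $S^3_{\pm 5/1}(4_1)$ and $S^3_{1/1}(6_1)$ in the hyperbolic regime, the \cite[Corollary 1.2]{IM} classification with orbifold Euler characteristic obstructions for the Seifert fibered exceptional surgeries, Lemma \ref{lem:transfer} plus the lifting of the geometric decomposition surface of \cite{AFW} for the toroidal $0$ versus $\pm 4$ cases on twist knots, and the separate treatment of $8_{20}$ and $8_{21}$. This matches the paper's proof in both structure and the key inputs, including correctly isolating the $0$-covers-$4$ subcase as the one needing the geometric decomposition argument.
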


This also completes the proof of Proposition \ref{prop:hyp2}.

\bibliography{bibliography}{}

\newcommand{\etalchar}[1]{$^{#1}$}
\begin{thebibliography}{BBP{\etalchar{+}}16}

\bibitem[AFW15]{AFW}
Matthias Aschenbrenner, Stefan Friedl, and Henry Wilton.
\newblock {\em 3-manifold groups}.
\newblock EMS Series of Lectures in Mathematics. European Mathematical Society
  (EMS), Z\"urich, 2015.

\bibitem[Ago00]{A}
Ian Agol.
\newblock Bounds on exceptional {D}ehn filling.
\newblock {\em Geom. Topol.}, 4:431--449, 2000.

\bibitem[BBP{\etalchar{+}}16]{regina}
Benjamin~A. Burton, Ryan Budney, William Pettersson, et~al.
\newblock Regina: Software for low-dimensional topology.
\newblock {\tt http://\allowbreak regina.\allowbreak sourceforge.\allowbreak
  net/}, 1999--2016.

\bibitem[BHW99]{BHW}
Steven~A. Bleiler, Craig~D. Hodgson, and Jeffrey~R. Weeks.
\newblock Cosmetic surgery on knots.
\newblock In {\em Proceedings of the {K}irbyfest ({B}erkeley, {CA}, 1998)},
  volume~2 of {\em Geom. Topol. Monogr.}, pages 23--34 (electronic). Geom.
  Topol. Publ., Coventry, 1999.

\bibitem[CDGW]{SnapPy}
Marc Culler, Nathan~M. Dunfield, Matthias Goerner, and Jeffrey~R. Weeks.
\newblock Snap{P}y, a computer program for studying the geometry and topology
  of $3$-manifolds.
\newblock Available at \url{http://snappy.computop.org} (01/15/2018).

\bibitem[CM80]{CM2}
H.~S.~M. Coxeter and W.~O.~J. Moser.
\newblock {\em Generators and relations for discrete groups}, volume~14 of {\em
  Ergebnisse der Mathematik und ihrer Grenzgebiete [Results in Mathematics and
  Related Areas]}.
\newblock Springer-Verlag, Berlin-New York, fourth edition, 1980.

\bibitem[CM01]{CM}
Chun Cao and G.~Robert Meyerhoff.
\newblock The orientable cusped hyperbolic {$3$}-manifolds of minimum volume.
\newblock {\em Invent. Math.}, 146(3):451--478, 2001.

\bibitem[CS99]{CS}
J.~H. Conway and N.~J.~A. Sloane.
\newblock {\em Sphere packings, lattices and groups}, volume 290 of {\em
  Grundlehren der Mathematischen Wissenschaften [Fundamental Principles of
  Mathematical Sciences]}.
\newblock Springer-Verlag, New York, third edition, 1999.
\newblock With additional contributions by E. Bannai, R. E. Borcherds, J.
  Leech, S. P. Norton, A. M. Odlyzko, R. A. Parker, L. Queen and B. B. Venkov.

\bibitem[EKS84]{EKS}
Allan~L. Edmonds, Ravi~S. Kulkarni, and Robert~E. Stong.
\newblock Realizability of branched coverings of surfaces.
\newblock {\em Trans. Amer. Math. Soc.}, 282(2):773--790, 1984.

\bibitem[FKP08]{FKP}
David Futer, Efstratia Kalfagianni, and Jessica~S. Purcell.
\newblock Dehn filling, volume, and the {J}ones polynomial.
\newblock {\em J. Differential Geom.}, 78(3):429--464, 2008.

\bibitem[GL14]{GL}
Cameron Gordon and Tye Lidman.
\newblock Taut foliations, left-orderability, and cyclic branched covers.
\newblock {\em Acta Math. Vietnam.}, 39(4):599--635, 2014.

\bibitem[Gor91]{G}
Cameron~McA. Gordon.
\newblock Dehn surgery on knots.
\newblock In {\em Proceedings of the {I}nternational {C}ongress of
  {M}athematicians, {V}ol.\ {I}, {II} ({K}yoto, 1990)}, pages 631--642. Math.
  Soc. Japan, Tokyo, 1991.

\bibitem[HK05]{HK}
Craig~D. Hodgson and Steven~P. Kerckhoff.
\newblock Universal bounds for hyperbolic {D}ehn surgery.
\newblock {\em Ann. of Math. (2)}, 162(1):367--421, 2005.

\bibitem[Hua02]{Hu}
Hong Huang.
\newblock Branched coverings and nonzero degree maps between {S}eifert
  manifolds.
\newblock {\em Proc. Amer. Math. Soc.}, 130(8):2443--2449, 2002.

\bibitem[IM16]{IM}
Kazuhiro Ichihara and Hidetoshi Masai.
\newblock Exceptional surgeries on alternating knots.
\newblock {\em Comm. Anal. Geom.}, 24(2):337--377, 2016.

\bibitem[Jeo]{J}
BoGwang Jeon.
\newblock The unlikely intersection theory and the cosmetic surgery conjecture.
\newblock Available at \url{https://arxiv.org/abs/1605.02258} (11/9/2016).

\bibitem[JN83]{JN}
Mark Jankins and Walter~D. Neumann.
\newblock {\em Lectures on {S}eifert manifolds}, volume~2 of {\em Brandeis
  Lecture Notes}.
\newblock Brandeis University, Waltham, MA, 1983.

\bibitem[LM16]{LM}
Tye Lidman and Ciprian Manolescu.
\newblock Floer homology and covering spaces.
\newblock 2016.

\bibitem[Mat90]{Ma}
Yves Mathieu.
\newblock Sur les n{\oe}uds qui ne sont pas d\'etermin\'es par leur
  compl\'ement et problemes de chirurgie dans les vari\'et\'es de dimension 3.
\newblock PHD Thesis, 1990.

\bibitem[Mei14]{M4}
Jeffrey Meier.
\newblock Small {S}eifert fibered surgery on hyperbolic pretzel knots.
\newblock {\em Algebr. Geom. Topol.}, 14(1):439--487, 2014.

\bibitem[Mos71]{M}
Louise Moser.
\newblock Elementary surgery along a torus knot.
\newblock {\em Pacific J. Math.}, 38:737--745, 1971.

\bibitem[NW15]{NW}
Yi~Ni and Zhongtao Wu.
\newblock Cosmetic surgeries on knots in {$S^3$}.
\newblock {\em J. Reine Angew. Math.}, 706:1--17, 2015.

\bibitem[Rat06]{R}
John~G. Ratcliffe.
\newblock {\em Foundations of hyperbolic manifolds}, volume 149 of {\em
  Graduate Texts in Mathematics}.
\newblock Springer, New York, second edition, 2006.

\bibitem[Ter13]{T2}
Masakazu Teragaito.
\newblock Left-orderability and exceptional {D}ehn surgery on twist knots.
\newblock {\em Canad. Math. Bull.}, 56(4):850--859, 2013.

\bibitem[Thu]{T}
William Thurston.
\newblock Geometry and topology of three-manifolds.
\newblock \url{http://library.msri.org/books/gt3m/}.
\newblock Accessed: 2016-10-01.

\bibitem[Tsa13]{VVT}
Valdemar~V. Tsanov.
\newblock Triangle groups, automorphic forms, and torus knots.
\newblock {\em Enseign. Math. (2)}, 59(1-2):73--113, 2013.

\bibitem[vK19]{K}
B.~von Ker{\'e}kj{\'a}rt{\'o}.
\newblock \"{U}ber die periodischen {T}ransformationen der {K}reisscheibe und
  der {K}ugelfl\"ache.
\newblock {\em Math. Ann.}, 80(1):36--38, 1919.

\bibitem[Wu11]{Wu}
Ying-Qing Wu.
\newblock The classification of toroidal {D}ehn surgeries on {M}ontesinos
  knots.
\newblock {\em Comm. Anal. Geom.}, 19(2):305--345, 2011.

\end{thebibliography}
\bibliographystyle{alpha}

\end{document}